\NewDocumentCommand{\makeabbrev}{mmm}
 {
  \yoruk_makeabbrev:nnn { #1 } { #2 } { #3 }
 }
\makeabbrev{\textbf}{b#1}{b,c,d,e,g,h,i,j,k,l,m,n,o,p,q,r,t,u,v,w,x,y,z,%
              B,C,D,E,G,H,I,J,K,L,M,N,O,P,Q,R,T,U,V,W,X,Y,Z}
\makeabbrev{\boldsymbol}{bs#1}{%
    a,b,c,d,e,f,g,h,i,j,k,l,m,n,o,p,q,r,s,t,u,v,w,x,y,z,%
    A,B,C,D,E,F,G,H,I,J,K,L,M,N,O,P,Q,R,S,T,U,V,W,X,Y,Z}
\makeabbrev{\mathsf}{sf#1}{a,b,c,d,e,f,g,h,i,j,k,l,m,n,o,p,q,r,s,t,u,v,w,x,y,z,%
                           A,B,C,D,E,F,G,H,I,J,K,L,M,N,O,P,Q,R,S,T,U,V,W,X,Y,Z}
\makeabbrev{\mathfrak}{f#1}{a,b,c,d,e,f,g,h,j,k,l,m,n,o,p,q,r,s,t,u,v,w,x,y,z,%
                             A,B,C,D,E,F,G,H,I,J,K,L,M,N,O,P,Q,R,S,T,U,V,W,X,Y,Z}
\makeabbrev{\mathcal}{c#1}{A,B,C,D,E,F,G,H,I,J,K,L,M,N,O,P,Q,R,S,T,U,V,W,X,Y,Z}
\makeabbrev{\mathbb}{s#1}{A,B,C,D,E,F,G,H,I,J,K,L,M,N,O,P,Q,R,S,T,U,V,W,X,Y,Z}
\newcommand{\mymu}{\boldsymbol{\mu}}
\newcommand{\myeta}{\boldsymbol{\eta}}
\def\defthm#1#2{%
  \newtheorem{#1}{#2}[section]%
  \expandafter\def\csname #1autorefname\endcsname{#2}%
  \expandafter\let\csname c@#1\endcsname\c@theorem}
\theoremstyle{definition}
\numberwithin{equation}{section}
\newtheorem{theorem}{Theorem}[section]
\title[Yosegi and Yoneda]{On the unicity of formal category theories\\%[.25ex] 
       \small \mdseries Yoneda structures, yosegi boxes, equipments}
\author[Di Liberti]{Ivan di Liberti$^\dag$}
\author[Loregian]{Fosco Loregian$^\ddag$}
\thanks{The first author is supported by grants GA17\hyp{}27844S and MUNI\fshyp{}A\fshyp{}1103\fshyp{}2017. The second author is supported by the Max Planck Inst. for Math. (Bonn), and wrote the present paper, in its entirety, during his stay at the Institute. Both authors warmly thank prof. Davide Bernardini, for his kind help reviewing a draft of the present paper. The reader interested in acquiring a copy of his `Notes on Yoneda structures' can write him at \href{mailto:davide.bernardini@uniroma1.it}{\sf davide.bernardini@uniroma1.it}.}
\address{
	Ivan di Liberti$^\dag$\newline
$^\dag$Department of Mathematics and Statistics\newline
Masaryk University, Faculty of Sciences\newline
Kotl\'{a}\v{r}sk\'{a} 2, 611 37 Brno, Czech Republic\newline
\href{mailto:diliberti@math.muni.cz}{\sf diliberti@math.muni.cz}\newline
}
\address{
	Fosco Loregian$^\ddag$\newline
	$\ddag$Max Planck Institute for Mathematics\newline
	Vivatsgasse 7, 53111 Bonn --- Germany\newline
	\href{mailto:flore@mpim-bonn.mpg.de}{\sf flore@mpim-bonn.mpg.de}
}
\g@addto@macro\bfseries{\boldmath}
\renewcommand{\textbf}[1]{\text{\fontseries{b}\selectfont{\upshape #1}}}
\def\CAT{\mathsf{CAT}}
\def\Cat{\mathsf{Cat}}
\def\caat{\mathsf{cat}}
\def\Set{\mathsf{Set}}
\def\Ab{\mathsf{Ab}}
\def\[{\begin{equation}} \def\]{\end{equation}}
\newlength{\seplen}
\newlength{\sepwid}
\def\firstblank{\,\rule{\seplen}{\sepwid}\,}
\def\secondblank{\firstblank\llap{\raisebox{2pt}{\firstblank}}}
\def\op{\text{op}}
\def\co{\text{co}}
\def\coop{\text{coop}}
\newcommand{\celtag}[2][dr]{\ar[#1,white, "#2"{black,description}]}
\def\ran{\mathrm{ran}}
\def\Ran{\mathrm{Ran}}
\def\lan{\mathrm{lan}}
\def\Lan{\mathrm{Lan}}
\def\rift{\mathrm{rift}}
\def\RIFT{\textsc{Rift}}
\def\leeft{\mathrm{lift}}
\def\Lift{\mathrm{Lift}}
\def\LIFT{\textsc{Lift}}
\def\Nat{\textsf{Nat}}
\newcommand{\psh}[2][\Set]{[{#2}^\op,#1]}
\DeclareMathOperator{\colim}{colim}
\def\jb{\smalltriangleleft}
\newcommand{\adjunct}[2]{\nsststile{#2}{#1}}
\let\xto\xrightarrow
\let\xot\xleftarrow
\def\sb{{}^\bullet\kern-.1em}
\tikzset{commutative diagrams/.cd,arrow style=tikz,diagrams={>=stealth'}}
\def\To{\Rightarrow}
\def\llambda{\lambda}
\newcommand{\Nearrow}{\rotatebox[origin=c]{45}{$\Rightarrow$}}
\newcommand{\Nwarrow}{\rotatebox[origin=c]{135}{$\Rightarrow$}}
\newcommand{\Searrow}{\rotatebox[origin=c]{-45}{$\Rightarrow$}}
\newcommand{\Swarrow}{\rotatebox[origin=c]{225}{$\Rightarrow$}}
\def\VCAT{\cV\text{-}\CAT}
\def\Spec{\text{Spec}}
\newcommand{\adjani}[2]{
\xymatrix@C=7mm{ A \ar@<4pt>[r]^{#1} \ar@{}[r]|\perp  & \ar@<4pt>[l]^{#2} B }}
\def\Prof{\mathsf{Prof}}
\def\yose{\left\{\begin{smallmatrix}
		\mathrm{yosegi} \\ \mathrm{boxes}
	\end{smallmatrix}\right\}}
\def\yone{\left\{\begin{smallmatrix}
		\mathrm{Yoneda} \\ \mathrm{structures}
	\end{smallmatrix}\right\}}
\def\equ{\left\{\begin{smallmatrix}
		\mathrm{Yoneda} \\ \mathrm{equipments}
	\end{smallmatrix}\right\}}
\def\reladjL#1{\tensor*[^{}_{#1}]{\dashv}{}}
\newcommand{\crc}{\cdot}
\def\dual{\lor}
\newcommand{\deduction}[4]{\begin{array}{c} #1 \to #2 \\ \hline #3 \to #4 \end{array}}
\newcommand{\RescaleSymbol}[2][.75]{\mathbin{\vcenter{\hbox{\scalebox{#1}{$#2$}}}}}
\def\myboxmin{\RescaleSymbol[.6]{\boxminus}}
\def\Kl{
  \mathchoice
    {\boldsymbol K\kern-.22em\raisebox{-.06em}{$\boldsymbol\ell$}}
    {\boldsymbol K\kern-.22em\raisebox{-.06em}{$\boldsymbol\ell$}}
    {\footnotesize \boldsymbol K\kern-.16em\scalebox{.75}{\raisebox{-.05em}{$\boldsymbol\ell$}}}
    {\boldsymbol K\kern-.22em\raisebox{-.06em}{$\boldsymbol\ell$}}
}
\def\pto{\mathrel{\ooalign{$\hfil\mapstochar\kern.2em\hfil$\cr$\to$}}}
\newcommand{\japanese}[2][min]{\begin{CJK}{UTF8}{#1} #2 \end{CJK}}
\newenvironment{coltext}[1][red]{\color{#1}}{\ignorespacesafterend}
\begin{document}
\begin{abstract}
 We prove an equivalence between cocomplete Yoneda structures and certain proarrow equipments on a 2-category $\mathcal K$. In order to do this, we recognize the presheaf construction of a cocomplete Yoneda structure as a relative, lax idempotent monad sending each admissible 1-cell $f :A \to B$ to an adjunction $\boldsymbol{P}_!f\dashv\boldsymbol{P}^*f$. Each cocomplete Yoneda structure   on $\mathcal K$ arises in this way from a relative lax idempotent monad `with enough adjoint 1-cells', whose domain generates the ideal of admissibles, and the Kleisli category of such a monad equips its domain with proarrows. We call these structures \emph{yosegi}. Quite often, the presheaf construction associated to a yosegi generates an \emph{ambidextrous} Yoneda structure; in such a setting there exists a fully formal version of Isbell duality.
\end{abstract}
\maketitle

\tableofcontents
% !TEX root = ../yosegi.tex
\section{Introduction}
Category theory attempted many times to axiomatize the essential features of the `presheaf construction' sending a small category $A$ to $\psh{A}$. Different facets of this construction (the operation of free cocompletion, the properties of the Yoneda embedding, Isbell duality, coend calculus and the theory of profunctors) suggest different directions for its efficient formalization; yet, even now that category theory reached its maturity, a `tautology' like the Yoneda lemma remains a profound and slightly mysterious statement. The present paper tries to unveil part of this mystery.

We shall say at the outset that our work fits into a solid track of previous research: in order to properly axiomatize the presheaf construction, the Australian school of category theory introduced the main notion we will be working with, \emph{Yoneda structures} \cite{street1978yoneda,street1981conspectus}; they give an efficient axiomatization for the properties of the Yoneda embedding, regarded as a cornerstone of category theory. A solution to the problem of which 2\hyp{}categories harbor a form of Yoneda lemma, spelled out in the language of internal fibrations, appeared earlier than (and somehow motivated) Yoneda structures in \cite{StreetFibreYoneda1974}.

Alternative approaches are also possible:
\begin{itemize}
	\item Wood's \emph{proarrow equipments} (see our \autoref{def-di-equi}, containing the original definition in \cite{wood1982abstract,wood1985proarrows,rosebrugh1988proarrows}) were introduced as an abstraction of the bicategory of profunctors (see \cite{benabou2000distributors,cattani2005profunctors,cordier1989shape} and \cite[§5]{cofriend} for a survey) on which the 2\hyp{}category $\caat$ of small categories embeds. This perspective focuses on the idea that categories are no more than multi\hyp{}object monoids, and presheaves are \emph{modules} over which these monoids act.
	\item Related to proarrow equipment is the notion of an \emph{hyperdoctrine}, introduced by Lawvere in \cite{lawvere1969adjointness} and expanded in \cite{pisani2012indexed,pisani2010logic} in view of its relation to indexed category theory.
	\item The formalism of \emph{KZ\hyp{}doctrines} (called, in a more precise taxonomy, \emph{lax idempotent 2\hyp{}monads}) axiomatizes the universal property of the presheaf construction as \emph{free cocompletion}; the notion was first introduced by Kock in \cite{kock1995monads}, and a link with Yoneda structure was first indirectly suggested by \cite{bunge}, and explicitly developed in \cite{walker}. Here, the author sketches a very clear connection between KZ\hyp{}doctrines \cite{kock1995monads} and Yoneda structures: a \emph{locally fully faithful} KZ\hyp{}doctrine yields a Yoneda structure once a compatible ideal of admissible maps is specified.
\end{itemize}
Yoneda structures, \emph{fibrational cosmoi} \cite{StreetCosmoi1974}, and proarrow equipments, all endow a 2\hyp{}category with additional information about a `calculus of bimodules' (the Yoneda embedding $y_A : A \to \psh{A}$, or rather its mate $\hom_A : A^\op\times A\to \Set$, is the identity 1-cell for $A$ in the bicategory $\Prof$ of profunctors). On the other hand, a KZ\hyp{}doctrine $\bsS$ endows a 2\hyp{}category $\cK$ with a procedure to build $\bsS$-cocomplete objects. These approaches are arguably related in an explicit way: easily, the strict 2\hyp{}category of small categories $A,B,\dots$, cocontinuous functors $f : \bsP A \to \bsP B$ between their presheaf categories and natural transformations is the strictification of the bicategory $\Prof$.

The present work builds an explicit connection between the universal property of $\psh{A}$ as free cocompletion, and the calculus of profunctors --embodied in the equivalence between presheaves on $A$ and discrete opfibrations on $A$. Under suitable assumptions (those of our \autoref{main-theorem}), it is possible to show that the three approaches yield equivalent theories: given a certain special kind of KZ\hyp{}doctrine, its Kleisli bicategory contains a calculus of profunctors (see §\ref{yose-to-equ}); given a sufficiently well-behaved proarrow equipment (see §\ref{equ-to-yone} and the definition of a \emph{Yoneda equipment} in \ref{def-di-yoda}) it is possible to recover the KZ\hyp{}doctrine it comes from.

The recent \cite{fiore2016relative}, offered us the main tool towards this equivalence result. In the paper, the authors introduce the notion of \emph{relative pseudomonads} to keep track of the fact that the presheaf construction $A\mapsto \psh{A}$ is not a monad \emph{stricto sensu}, since it can only take small categories as inputs: their Example 4.2 shows that when $\psh{\firstblank}$ is regarded as a relative pseudomonad, its \emph{Kleisli bicategory} consists of the bicategory $\Prof$ of profunctors. It is natural to conjecture that this is a particular example of a much general phenomenon involving the presheaf construction $\bsP$ of a generic Yoneda structure: the present paper answers this conjecture in the positive, showing \autoref{main-theorem}.

Following this result, every \emph{cocomplete} Yoneda structure on a 2\hyp{}category $\cK$ yields a relative pseudomonad $\bsP : \cA\to \cK$ via its presheaf construction, and the Kleisli bicategory $\Kl(\bsP)$ is a proarrow equipment; it is in fact a proarrow equipment of a certain well\hyp{}behaved kind (\autoref{yoda-equipment}), since it comes equipped with a $j$-relative adjunction between $\cA$ and the Kleisli bicategory $ \Kl(\bsP)$ ($j : \cA\subset \cK$ is the inclusion of `admissible' 1\hyp{}cells in $\cK$). Our §\ref{equ-to-yone} shows how such equipments allow one to reconstruct the Yoneda structure they come from.

Thus, it is possible to recover an equivalence between the above approaches to formal category theory, that correspond each other via the notion of a relative lax idempotent monad `with enough adjoint 1\hyp{}cells'. We found the notion rich and interesting \emph{per se}, and thus indulged to concoct a special name for such monads: a functor $\bsP$ as in \autoref{def:YE} will be called a \emph{yosegi box}, or a \emph{yosegi} for short). Overall, \autoref{main-theorem} produces a satisfying answer to \cite{281600}, clarifying to what extent Yoneda structures and proarrow equipments can be considered on the same footing.

We conceived the definition of yosegi in \autoref{def:YE} in order to prove the equivalence as a result of a chain of implications
\[
	\notag
	\begin{tikzcd}
		& {\yose} \arrow[Rightarrow,rd,"\text{§}\ref{yose-to-equ}"] &  \\
		{\yone} \arrow[Rightarrow,ru,"\text{§}\ref{yone-to-yose}"] &  & {\equ} \arrow[Rightarrow,ll,"\text{§}\ref{equ-to-yone}"]
	\end{tikzcd}
\]
(in this diagram each label denotes which subsection proves the implication).
Our \autoref{main-theorem} provides a dictionary to translate results between the two settings of Yoneda structures and equipments. Our §\ref{sec:repre} initiates a systematic study of which properties of a yosegi endow the associated equipment and the associated Yoneda structure with additional features. We show, for example, how a narrower and better\hyp{}behaved class of yosegis still covers all 2\hyp{}categories of the form $\VCAT$.

Somehow, the definition of yosegi is engineered in order to axiomatize the presheaf construction; on the other hand, it is willingly more general than those presheaf constructions appearing in the wild, where e.g. $\bsP$ is a `representable' functor $\psh[\Omega]{\firstblank}$ for some $\Omega\in\cK$; such constructions are the most natural example of Yoneda structure, and arise \cite{weber2007yoneda} when $\Omega$ classifies discrete opfibrations in $\cK$. Reasonably mild assumptions on $\cK$ ensure that $\Omega\cong \bsP (1)$ (this is true \emph{a fortiori} when $\Omega$ is a classifier in a 2\hyp{}topos \cite{weber2007yoneda}).

In the present work, we wanted to avoid making further assumptions on $\cK$, like those ensuring that the Yoneda structure is borne by a fibrational cosmos \cite{StreetCosmoi1974,street1980cosmoiof} or a 2\hyp{}topos structure on $\cK$. We could narrow our study to the yosegi giving rise to fibrational cosmoi and the \emph{good} Yoneda structures of Weber; in this respect, it is natural to wonder which Yoneda equipments turn the representing object of $\bsP$ into a classifier. We leave the question open for future development.

Instead, here we concentrate on studying \emph{ambidextrous} yosegi, where the yosegi $\bsP$ has a left adjoint $\bsQ$; this gives rise to a very rich theory, formalizing the behaviour of the presheaf construction on $\Cat$.

Ambidextrous yosegi correspond to cocomplete, \emph{ambidextrous Yoneda structure}, where both a covariant and a contravariant Yoneda embedding interact nicely: \autoref{ambidex} provides the precise definition for such a notion, that we expanded from a private communication \cite{bernardini2017yoneda} (it is our sincere hope that our work will spur the author to make this interesting survey available to the public); \autoref{2-sided-yosegi} observes that ambidextrous Yoneda structures correspond quite naturally to equipments where each 1\hyp{}cell $f :A\to B$ is sent to a pair of adjoints $B(f,1)\dashv B(1,f)$ in its Kleisli bicategory.

Like all theories where a sensible notion of duality is hard\hyp{}coded, ambidexterity is a particularly rich asset to $\cK$: 
\begin{enumtag}{ad}
	\item we can re\hyp{}enact \emph{Isbell duality} in an ambidextrous Yoneda structure. This is the main topic investigated in \autoref{isbellone} and the subsequent claims. Moreover, 
	\item every respectable category theory must contain a full\hyp{}fledged co/end calculus; in an ambidextrous yosegi we can define the coend operation $\int^B : [A^\dual\times A, B]\to B$ in §\ref{coendz}, and in the particular case where $B = \bsP 1$, define the bicategory of `$\bsP$-profunctors'. 
\end{enumtag}
This last point, in particular, visibly closes the circle of equivalences between formal category theories.

Appendices \ref{app:relmo} and \ref{app:contra} contain a survey of technical results: \autoref{app:relmo} is entirely devoted to an explicit description of the skew monoidal structure \cite{szlachanyi2012skew} on a functor category $[\cX, \cY]$ having as skew\hyp{}monoidal unit a prescribed functor $J : \cX \to \cY$: when the left extension along $J$ exists, and defines a functor $J_!$, the skew monoidal structure is defined by the rule $F\jb G := J_!F\crc G$ ($J_!F$ is an endofunctor of $\cY$, so the composition makes sense). As noted in \autoref{we-re-better-than-uustalu}, \autoref{app:relmo} offers a formal argument showing \cite[Thm. 3.1]{altenkirch2010monads} holds in fair generality for abstract 2\hyp{}categories: \emph{relative monads} can now defined to be internal monoid in this skew monoidal structure, so they're particular functors $T : \cX \to \cY$ with maps $\mymu : T\jb T \To T$ and $ \myeta : J \To T$ satisfying the monad axioms: of course, since the $\jb$ skew monoidal structure is highly non\hyp{}associative and non\hyp{}unital in full generality, we cannot expect but a feeble reminiscence of the monad properties (as noted in \autoref{aint-no-algebras}, when $T\not\cong J\jb T$ or $T\not\cong T\jb J$ the notion of $T$\hyp{}algebra loses meaning: finding the minimal assumptions under which they can be defined is the aim of \autoref{app:contra}, where we sketch the basic theory of lax idempotent relative monads and their algebras.
\section{Preliminaries.}
The present section collects the necessary amount of 1- and 2-dimensional category theory we shall employ in the following discussion, mainly for ease of future reference. The definition of pointwise and absolute extensions, relative adjunctions, and obviously the definition of Yoneda structure come from \cite{street1978yoneda} (we slightly differ from Street-Walters' terminology in that we do not mention `axiom 3$^*$'); the notion of cocomplete Yoneda structure is a rielaboration of \cite{bunge}; the definition of proarrow equipment is the original one given in \cite{wood1982abstract}, but see the subsequent \cite{wood1985proarrows,rosebrugh1988proarrows} for some variations on the original axioms. 
\subsection{Conventions}
Among different foundational conventions that one may adopt, we implicitly fix a universe ${\sf U}$, whose elements are termed \emph{sets}; \emph{small categories} have a \emph{set} of morphisms; \emph{locally small} categories are now small in a bigger universe ${\sf U}^+ \succ{\sf U}$; this is called the \emph{two\hyp{}universe convention}.

We shortly denote $\caat$ the 2\hyp{}category of small categories, $\Cat$ the 2\hyp{}category of ${\sf U}^+$\hyp{}small categories (each of whose hom is a set). Generic 1\hyp{}categories are denoted with uppercase Latin letters $C,D,\dots$ with the sole exception of `special' categories of algebraic structures, like e.g. sets, abelian groups and modules over a ring: $\Set, \Ab\dots$; generic 2\hyp{}categories are denoted in \verb|mathcal| face, $\cK, \cH,\dots$, but again \emph{specific} 2\hyp{}categories like $\caat, \Cat$ are denoted in a sans\hyp{}serif face.

Sometimes, when there is an adjunction between two 1\hyp{}cells $f,u$, we adopt $f\adjunct{\epsilon}{\eta}u$ as a compact notation to denote all at once that $f$ is left adjoint to $u$, with unit $\eta \colon 1 \To uf$ and counit $\epsilon\colon fu\To 1$. Given a 1\hyp{}cell $f$ and a 2\hyp{}cell $\alpha$, the \emph{whiskering} of $f$ and $\alpha$, i.e. the horizontal composition of $\alpha$ and the identity 2\hyp{}cell of $f$, is denoted $f * \alpha$ or $\alpha * f$ (according to the side of whiskering).
\begin{notation}[Conventions about extensions and lifts]\label{notat:liftext}
	While it is obvious that the notions of right and left extension and lift correspond each other under the involutions $(\firstblank)^\op, (\firstblank)^\co,(\firstblank)^\coop : \cK \to \cK$ (each taken with its appropriate variance), it is useful to have a diagram illustrating at once these universal constructions: the definition of Yoneda structure relies on the notion of left extension (and on a minor note, left lifting), so we will usually refer to left extensions to outline formal definitions; however, all four notions in \eqref{left-and-right-lift-and-ext} are perfectly dual: moving horizontally reverses the direction of 1\hyp{}cells, but not of 2\hyp{}cells; moving vertically but not horizontally reverses the direction of 2\hyp{}cells, but not of 1\hyp{}cells.
	\[\label{left-and-right-lift-and-ext}
		\begin{array}{|c|c|}\hline
			\xymatrix{
			A \ar@{}[dr]|(.3){\Swarrow\eta}\ar[d]_g \ar[r]^f        & B                                      \\
			C \ar@{.>}[ur]_{\lan_gf}                                & {\tiny \deduction{\lan_gf}{h}{f}{hg}}
			}
			                                                        &
			\xymatrix{
			{\tiny \deduction{\leeft_gf}{h}{f}{gh}}                 & C\ar[d]^g                              \\
			B\ar[r]_f \ar@{.>}[ur]^{\leeft_gf}                      & \ar@{}[ul]|(.3){\Nearrow\eta} A
			}                                                                                                \\ \hline
			\xymatrix{
			A \ar@{}[dr]|(.3){\Nearrow\varepsilon}\ar[d]_g \ar[r]^f & B                                      \\ C
			\ar@{.>}[ur]_{\ran_gf}                                  & {\tiny \deduction{hg}{f}{h}{\ran_gf}}
			}
			                                                        &
			\xymatrix{
			{\tiny \deduction{h}{\rift_gf}{gh}{f}}                  & C\ar[d]^g                              \\ B\ar[r]_f
			\ar@{.>}[ur]^{\rift_gf}                                 & \ar@{}[ul]|(.3){\Swarrow\varepsilon} A
			}                                                                                                \\ \hline
		\end{array}
	\]
	We say that a pair $\langle u,\eta\rangle$ \emph{exhibits} the left extension $\lan_gf$ of $f : A\to B$ along $g : A \to C$, but we indulge almost always to denote $u = \lan_gf$. As it is customary, we say that a left (resp., right) extension is \emph{pointwise} if for every object $C$ and $k : X\to C$ the diagram obtained pasting at $g$ the comma object $(g/k)$ (resp., $(k/g)$) is again a left extension \cite[5.2]{street1981conspectus}, and that it is \emph{absolute} if it is preserved by all 1\hyp{}cells; the same nomenclature applies to define pointwise and absolute lifts.
\end{notation}
\begin{definition}[Relative adjunction]\label{def:reladj}
	Let $f : A\to X,g : X\to B$ be a pair of 1\hyp{}cells in a 2\hyp{}category $\cK$, and $j : A\to B$; $f$ is a \emph{$j$-relative left adjoint} to $g$ if there is a 2\hyp{}cell $\eta : j \To gf$ such that the pair $\langle f, \eta\rangle$ exhibits the absolute left lifting of $j$ along $g$.
\end{definition}
\begin{remark}
	If $\cK = \Cat$, a relative adjunction consists equivalently of a natural isomorphism $B(ja,gx)\cong X(fa,x)$ or an isomorphism of profunctors $B(j,g)\cong X(f,1)$.
\end{remark}
We now introduce the notion of Yoneda structure and proarrow equipment (\autoref{def-di-equi}).
\begin{definition}[Yoneda structure]\label{def-di-yoda}
	A \emph{Yoneda structure} on a 2\hyp{}category $\cK$ consists of \emph{Yoneda data}:
	\begin{enumtag}{yd}
		\item \label{yd:uno} An ideal $\cJ_{\bsP}$ of 1\hyp{}cells called `admissible'; arrows in this ideal determine admissible objects in the class $\cJ_{\bsP, 0}\subseteq \cK$: $A$ is admissible if such is its identity 1\hyp{}cell.
		\item \label{yd:due} Each admissible object $A$ has a `Yoneda arrow' $y_A : A \to \bsP A$ to an object called the `presheaf object' of $A$.
		\item \label{yd:tre} every admissible morphism $f : A\to B$ with admissible domain fits into a triangle
		\[\label{triangle-data}
			\begin{tikzcd}
				{} & A\ar[d,phantom,"\chi^f\Searrow"]\ar[dr, "f"]\ar[dl, "y_A"'] & {} \\
				\bsP A & {} & \ar[ll, "{B(f,1)}"] B
			\end{tikzcd}
		\]
		filled by a 2\hyp{}cell $\chi : y_A \to B(f,f) =: B(f,1)\crc f$.
	\end{enumtag}
	These data satisfy the following properties:
	\begin{enumtag}{ya}
		\item \label{ya:uno} The pair $\langle B(f,1), \chi^f\rangle$ exhibits the left extension $\Lan_fy_A$.
		\item \label{ya:due} The pair $\langle f, \chi^f\rangle$ exhibits the absolute left lifting $\LIFT_{B(f,1)}y_A$.
		\item \label{ya:ter} The pair $\langle 1_{\bsP A}, 1_{y_A} \rangle$ exhibits the left extension $\Lan_{y_A}y_A$.
		\item \label{ya:qtr} Given a pair of composable 1\hyp{}cells $A \xto{f} B\xto{g} C$, the
		pasting of 2\hyp{}cells
		\[
			\begin{tikzcd}[column sep=large, row sep=large]
				A\ar[d, "f"']\ar[rr, "y_A"{name=yonA}] && \bsP A\\
				B \ar[r, "y_B"{name=yonB}]\ar[d, "g"'] & \bsP B\ar[ur, "\bsP f"']\\
				C\ar[ur, "{C(g,1)}"'] \ar[from=yonA, to=yonB, shorten >=2mm, shorten <=4mm, Rightarrow, "\chi^{y_B f}"] \ar[from=yonB, shorten >=4mm, shorten <=4mm, Rightarrow, "\chi^g"] \end{tikzcd}
		\]
		exhibits the extension $\Lan_{gf}y_A = C(gf,1)$.
	\end{enumtag}
\end{definition}
\begin{remark}[A few remarks on the axioms]\label{remaxioms}
	Note how axiom \ref{ya:due} asserts that $f$ is the $y_A$\hyp{}relative left adjoint (see \autoref{def:reladj}) of $B(f,1)$. Axioms 3-4 together entail that $A\mapsto \bsP A$ is a pseudofunctor, whose coherence morphisms are defined by the universal property of left extensions; for example, defining $\bsP f$ as $\Lan_{y_B\crc f}y_A = \bsP B(y_B\crc f,1)$ it follows from axiom \ref{ya:ter} that $B(y_A,1) = \bsP(1_A)\cong 1_{\bsP A}$. The intuition behind this is that the Yoneda embedding is a \emph{dense} map: the left extension along itself exists, it is pointwise and it is exhibited by the identity 1\hyp{}cell $1_{\bsP A}$.
\end{remark}
\begin{notation}
	For reasons that will appear evident in \autoref{presh-notat}, we denote the correspondence $f\mapsto \bsP B(y_B\crc f,1)$ as $\bsP^* f$.
\end{notation}
\begin{definition}[Cocomplete Yoneda Structure]\label{yoda-cocompleta}
	Each axiom of \autoref{def-di-yoda} encodes a certain facet of the Yoneda lemma; a \emph{cocomplete} Yoneda structure allows to mimic the universal property of $\bsP A$ as free cocompletion of $A$, and it provides as well a weak form of calculus of extensions. We say a Yoneda structure is cocomplete when in the triangle
	\[
		\begin{tikzcd}
			A \arrow[r, "f"] \arrow[d, "y_A"'] & \bsP B \\
			\bsP A \arrow[ur, "\lan_{y_A}g"'] &
		\end{tikzcd}
	\]
	where $f\in\cJ_{\bsP}$,
		\begin{itemize}
	\item  the 1\hyp{}cell $\lan_{y_A}f$ exists;
	\item  exhibits the left adjoint $\lan_{y_A}f \dashv \bsP B(f,1)$;
	\item $\lan_{y_A}f$ preserves $\lan_{y_C}g$ for every map $g: C \to \bsP A$. 
\end{itemize}
\end{definition}
\begin{remark}
	The existence of this Kan extension mirrors the notion of `cocompleteness with respect to a KZ\hyp{}doctrine' employed by \cite{walker}.
\end{remark}
\begin{remark}\label{nerve-real}
In a cocomplete Yoneda structure there is an analogue of the `nerve\hyp{}realization' paradigm relating the left extension of a 1\hyp{}cell along Yoneda to its nerve $B(f,1)$: this was already exploited in \cite[2.16]{accpres}. It was originally conjectured by the authors that, given the existence $\lan_{y_A}f$ in the diagram above,  it was possible to derive the adjunction $\lan_{y_A}f \dashv B(f,1)$ from the axioms of Yoneda structure, especially \ref{ya:due}. In $\Cat$ this is in fact possible because such a Kan extension is always pointwise. Apparently the same results does not seem to follow in full generality. Instead, in a cocomplete Yoneda structure \ref{ya:due} can be derived by $\lan_{y_A}f \dashv B(f,1)$ and thus is redundant.
\end{remark}
As already said, the definition of proarrow equipment that follows comes from \cite{rosebrugh1988proarrows} where it is spelled out for bicategories; for us, $\cA$ is a strict 2\hyp{}category.
\begin{definition}[Proarrow equipment]\label{def-di-equi}
	Let $p^* : \cA \to \cM$ be a functor between bicategories; $p^*$ is said to \emph{equip $\cA$ with proarrows} or to be a \emph{proarrow equipment for $\cA$} if
	\begin{enumtag}{pe}
		\item \label{pe:due} $p^*$ is locally fully faithful;
		\item \label{pe:ter} for every arrow $f\in\cA$, $p^*f$ has a right adjoint in $\cM$.
	\end{enumtag}
\end{definition}
We will adhere to the customary tradition to treat $\cA$ as a locally full sub\hyp{}bicategory of $\cM$, with the same objects.
% !TEX root = ../yosegi.tex
\section{Yosegi boxes}
% \epigraph{Lemarchand, who had been in his time a maker of singing birds, had constructed the box so that opening it tripped a musical mechanism, which began to tinkle a short rondo of sublime banality.}{C. Barker}
\label{sec:yosegi}
\subsection{The presheaf construction on $\Cat$}
In the present section we study the pair $(\Cat, \bsP)$, where $\bsP : \caat\to \Cat$ is the presheaf construction sending $A\mapsto\psh{A}$; this is defined having domain the category $\caat$ of small categories, and codomain the locally small ones; the embedding of $\caat$ into $\Cat$ will always be denoted as $j : \caat\subset\Cat$: it is an inclusion at the level of all cells.

We fix a notation that can be easily generalized to the case of a pair $(\cK,\bsP)$, where $\bsP : \cA\to \cK$ is the presheaf construction of a Yoneda structure on $\cK$. The section is designed in order to make the definition of yosegi in \autoref{def:YE} and the leading argument in the proof of \autoref{main-theorem} appear natural and notationally straightforward.
\begin{notation}[Presheaves]\label{presh-notat}
	We consider the functor $\bsP : A\mapsto \psh{A}$ as a \emph{covariant} correspondence on functors and natural transformations; more formally, $\bsP$ acts as a correspondence $\caat \to \Cat$ sending functors $f : A\to B$ to \emph{adjoint pairs} $\bsP_!f : \bsP A\leftrightarrows \bsP B : \bsP^* f$, and its action on 2\hyp{}cells is determined by our desire to privilege the left adjoint, inducing a 2\hyp{}cell $\alpha_! : \bsP_! f \To \bsP_!g$ for each $\alpha : f\To g$. Given $f : A\to B$, the functor $\bsP^*f := \bsP B(y_B\crc f,1)$ acts as pre\hyp{}composition with $f$, whereas $\bsP_! f$ is the operation of left extension along $f$. The situation is conveniently depicted in the diagram
	\[
		\begin{tikzcd}
			A \arrow[r, "f"] \arrow[d, "y_A"'] & B \arrow[d, "y_B"] \\
			\bsP A \arrow[shift right, r, "\bsP_! f"'] & \bsP B\arrow[shift right, l, "\bsP^* f"']
		\end{tikzcd}
	\]
	Such diagram is filled by an isomorphism when it is closed by the 1\hyp{}cell $\bsP_! f$ and by the cell $\chi^{y_B\crc f}$, as in \autoref{remaxioms}, when it is closed by $\bsP^*f$.
\end{notation}
\begin{definition}[Small functor and small presheaf]
	Let $X$ be a category; we call a functor $F : X^\op\to \Set$ a \emph{small presheaf} if it results as a \emph{small} colimit of representables; equivalently, $F$ is small if it exist a small subcategory $i : A\subset X$ and a legitimate presheaf $\bar F : A^\op\to \Set$ of which the functor $F$ is the left Kan extension along $i$.
\end{definition}
\begin{remark}
	The same definition applies, of course, to a functor $F : X \to Y$ between two large categories.
\end{remark}
\begin{notation}[Small presheaves]
	It turns out that the category of small presheaves on $X$ is legitimate in the same universe of $X$ (while the category of all functors $X^\op \to\Set$ isn't). Given a locally small category there is a Yoneda embedding $X \to [X^\op, \Set]_s$, having the universal property of free cocompletion of $X$ (see \cite[§3]{il-vecchio}). We explicitly record how the functor $\psh{\firstblank}_s:  \Cat \to \Cat$ acts on 1\hyp{} and 2\hyp{}cells: the universal property of $\psh{\firstblank}_s$ proved in \cite{il-vecchio} implies that in the square
	\[
		\begin{tikzcd}
			A \arrow[r, "f"] \arrow[d, "y_A"'] & B \arrow[d, "y_B"] \\
			\psh{A}_s \arrow[dotted, r] & \psh{B}_s
		\end{tikzcd}
	\]
	the dotted arrow exists (it is the Yoneda extension of $y_B\crc f$). The action of $\psh{\firstblank}_s$ on 2\hyp{}cells is uniquely determined as a consequence of this definition.
\end{notation}
\begin{remark}
	Unlike the previous case, the 1\hyp{}cell $\Lan_{y_A}(y_B \crc f)$ does not have a right adjoint; the reason is simple: such an adjoint would correspond to $\firstblank\crc f$, which however doesn't always restrict its action to small presheaves yielding a functor $\psh{B}_s \to \psh{A}_s$. Fortunately, it does if $f$ satisfies a certain condition that we call \emph{propriety}.%, we can ensure the existence of such 1\hyp{}cell.
\end{remark}
\begin{definition}[Proper functor]
	A functor $f : A \to B$ is called \emph{proper} if for each $i : \bar B\to B$ having small domain, the comma category $(i/f)$ (exists and it is) is small.
\end{definition}
In such an assumption, if $Q\in \psh{Y}_s$, we can consider the diagram
\[
	\begin{tikzcd}
		& (i/f) \ar[d, phantom, "\Swarrow"]\arrow[r] \arrow[ld, "i'"'] & E \ar[d,phantom,"\Swarrow"]\arrow[rd, "\bar Q"] \arrow[ld, "i"] &  \\
		X \arrow[r, "f"'] & Y \arrow[rr, "Q"'] & {} & \Set
	\end{tikzcd}
\]
since $Q\cong i_!\bar Q$ is pointwise, the whole diagram is still a left extension, so $f^*(Q)\cong i'_!(\bar Q\crc f')$.
\begin{remark}\label{salvaculo}
	All functors with small domain are proper (so in particular every Yoneda embedding $y_A : A \to \psh{A}$ is proper); another sufficient condition is to be fully faithful; proper maps, extended to be defined on 2\hyp{}categories, recover the notion of ideal of admissibles in \cite{street1978yoneda}.
\end{remark}
\begin{lemma}\label{sopra}
	Let $j : \caat\subset\Cat$ be the embedding of small categories in locally small ones. For every large category $X$ there is a natural isomorphism $[X^\op, \Set]_s\cong \Lan_j\bsP(X)$ (a convenient shorthand is to denote the left extension of $\bsP$ along $j$ as $j_!\bsP$; this is compatible with the notation in \autoref{app:relmo} and we will adopt it without further mention). More in particular, there is a canonical isomorphism between $\bsP\jb\bsP = j_!\bsP \cdot \bsP$ and the functor $[(\bsP \firstblank)^\op, \Set]_s$, where for a large category $X$, the category $\psh{X}_s$ designates small presheaves $X^\op \to \Set$.
\end{lemma}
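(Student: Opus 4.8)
The plan is to identify the colimit that computes $j_!\bsP(X)=\Lan_j\bsP(X)$ with the filtered union of the presheaf categories of the small \emph{full} subcategories of $X$; once this is done, the right\hyp{}hand side is $\psh{X}_s$ almost by definition, and the second assertion will follow by feeding $X=\bsP A$ to the first. Throughout I treat $\Lan_j\bsP$ as the $\Cat$\hyp{}enriched (pointwise) left extension, so that $j_!\bsP(X)$ is the weighted colimit $\int^{A\in\caat}\Cat(A,X)\crc\bsP A$, the copower being the cartesian product with the hom\hyp{}category $[A,X]$.

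I would first dispose of the right\hyp{}hand side. Writing $\cU(X)$ for the directed poset of small full subcategories $i\colon A\hookrightarrow X$ — directed because a finite union of small full subcategories is again one — every small presheaf, being a small colimit of representables, is of the form $i_!\bar F=\Lan_i\bar F$ for some $A\in\cU(X)$, and any natural transformation between two small presheaves is supported on a common stage $A''\supseteq A,A'$. Since left extension along a fully faithful $1$\hyp{}cell is fully faithful, each $i_!\colon \psh{A}\to\psh{X}_s$ is fully faithful, and these functors are compatible along inclusions; as filtered colimits in $\Cat$ are computed on objects and on morphisms exactly as in $\Set$, this exhibits a canonical isomorphism
\[
	\psh{X}_s\;\cong\;\colim_{A\in\cU(X)}\psh{A},
\]
the transition functors being the $i_!$. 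This step is essentially the universal property of small presheaves recorded in \cite[§3]{il-vecchio}.

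It remains to match this filtered colimit with $j_!\bsP(X)$. The legs $a\colon A\to X$ of the comma category $(j/X)$ induce $a_! := \Lan_a\colon \bsP A=\psh{A}\to\psh{X}_s$ (the left extension of a genuine presheaf along $a$ is a small colimit of representables), and $a_!=(a'h)_!\cong a'_!\,h_!$ makes $\{a_!\}$ a cocone; the universal property of $j_!\bsP(X)$ therefore yields a comparison $\Phi_X\colon j_!\bsP(X)\to\psh{X}_s$, natural in $X$ by pseudofunctoriality of $(\firstblank)_!$. To see $\Phi_X$ is an equivalence I would reduce the weighted colimit defining $j_!\bsP(X)$ to the conical filtered colimit over $\cU(X)$: the full inclusions are cofinal among all legs, since any $a\colon A\to X$ factors through its full image $A''\in\cU(X)$, and the weight $\Cat(\firstblank,X)$ is flat because its category of elements is the \emph{filtered} category $(j/X)$ — binary coproducts of small categories provide an upper bound for any pair of objects, and a parallel pair $h,k\colon(A,a)\rightrightarrows(A',a')$ is coequalised by the coprojection $A'\to Q$ onto the coequaliser of $h,k$, through which $a'$ factors. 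Under this reduction $\Phi_X$ becomes the canonical comparison of the previous display and is an isomorphism.

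The conceptual core, and the main obstacle, is exactly this last reduction: turning the $\Cat$\hyp{}weighted left extension into a conical filtered colimit, for which one must verify the flatness of $\Cat(\firstblank,X)$ and the cofinality of full inclusions, and then control size — the index $(j/X)$ is large, so I must check that $\Cat$ (of ${\sf U}^+$\hyp{}small categories) really houses the colimit and that the outcome is again locally small, which is the whole reason for restricting to \emph{small} presheaves. Granting the first assertion, the second is its instance at the large category $X=\bsP A$: unwinding the skew composite of \autoref{app:relmo}, $\bsP\jb\bsP=j_!\bsP\crc\bsP=(j_!\bsP)\circ\bsP$, so $(\bsP\jb\bsP)(A)=j_!\bsP(\bsP A)\cong\psh{(\bsP A)}_s=[(\bsP A)^\op,\Set]_s$, naturally in $A$.
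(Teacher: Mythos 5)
Your route is genuinely different from the paper's --- the paper never computes $\Lan_j\bsP(X)$ as a colimit at all, but verifies the universal property of the extension directly by a density argument, extending any $\alpha : \bsP\To H\crc j$ to small presheaves --- and much of your scaffolding is correct: the identification $\psh{X}_s\cong\colim_{A\in\cU(X)}\psh{A}$ along the fully faithful functors $i_!$, the cofinality of the full inclusions inside $(j/X)$ (via full images, which are small because $X$ is locally small), the filteredness of $(j/X)$, and the deduction of the second claim by instantiating $X=\bsP A$. The gap sits exactly in the step you flag as the conceptual core: the passage from the $\Cat$-enriched weighted colimit $\int^{A\in\caat}[A,X]\times\bsP A$ to a conical colimit over $(j/X)$. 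The principle you invoke --- a weight whose category of elements is filtered is ``flat'', and then the weighted colimit is the conical colimit over that category --- is a fact about $\Set$-valued weights, and it fails for $\Cat$-valued weights on both counts. First, the category of elements of the $\Cat$-valued weight $[j(\firstblank),X]$ is not the strict comma category $(j/X)$: its morphisms $(A,a)\to(A',a')$ consist of a functor $h:A\to A'$ \emph{together with} a natural transformation $a\To a'h$, so the $2$-cells of the weight are part of the indexing and cannot be discarded. Second, even over the correct category of elements, $\Cat$-weighted colimits are not conical colimits: with indexing $2$-category $1$, weight $\mathbf{2}$ and diagram constant at the terminal category, the weighted colimit is the copower $\mathbf{2}$, while the conical colimit over $\mathrm{el}(\mathbf{2})=\mathbf{2}$ of the constant diagram is $1$. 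Flatness is simply not the relevant property; the obstruction is the enrichment.

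What is missing is an argument that the $2$-cells of the weight are absorbed by the fibres: for instance, every natural transformation $\alpha:a\To a'$ between functors $A\to X$ is a whiskering $i''\ast\bar\alpha$ for a unique $2$-cell $\bar\alpha$ in $[A,A'']$, where $i'':A''\hookrightarrow X$ is the small \emph{full} subcategory of $X$ on the images of $a$ and $a'$ (fullness is what produces $\bar\alpha$); with this in hand one can check that enriched cowedges under $[A,X]\times\bsP A$ and ordinary cocones under the restriction to $\cU(X)$ determine each other, after which your computation closes the argument. Alternatively, you can retreat to the ordinary ($\Set$-enriched) left Kan extension of the underlying functors, which is what the paper's own universal-property check effectively addresses: there the pointwise extension \emph{is} by definition the conical colimit over the comma category $(j/X)$, and your cofinality and filtered-colimit computation then constitutes a complete and correct proof. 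As written, however, the central reduction rests on a principle that is false in the $\Cat$-enriched setting you explicitly chose to work in.
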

\begin{proof}
	To show that the universal property of the Kan extension is fulfilled by $[X^\op, \Set]_s$ we employ a density argument: given a functor $H : \Cat\to \Cat$, every natural transformation $\alpha : \bsP \To H\crc j$ can be extended to a natural transformation $\bar\alpha$ from small presheaves to $H$, using the fact that each $F\in \psh{X}_s$ can be presented as a small colimit of representables: the components of $\bar\alpha_X$ are defined, if $F \cong i_! \tilde F \in\psh{X}_s$ for $i : A\to X$, as
	\[\notag
		\psh{X}_s \xto{i^*} \psh{A}_s = \psh{A} \xto{\alpha_A} HjA = HA \xto{Hi} H.X\qedhere
	\]
\end{proof}
\begin{corollary}
	From this it follows that there is a canonical integral isomorphism
	\[[X^\op, \Set]_s \cong \int^{A\in\caat} [X,A]^\op\times[A,\Set]\]
	(in particular, this specific coend exists even if it is indexed over a non\hyp{}small category); this will turn out to be useful in the proof of \autoref{sotto}.
\end{corollary}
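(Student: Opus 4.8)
The plan is to read the statement as nothing more than an explicit unwinding of the Kan extension furnished by \autoref{sopra}, which already identifies the category of small presheaves with $(\Lan_j\bsP)(X) = (j_!\bsP)(X)$. Since $\Cat$ is cocomplete and cartesian closed, hence tensored over itself, a left extension along $j$ is computed (whenever it exists) by the pointwise density formula
\[
  (\Lan_j\bsP)(X)\;\cong\;\int^{A\in\caat}\Cat(jA,X)\otimes\bsP A,
\]
where $\otimes$ is the $\Cat$-tensor, which for the self\hyp{}enrichment of $\Cat$ is just the cartesian product of categories. First I would substitute $jA=A$ and $\bsP A=[A^\op,\Set]$ to rewrite the integrand, obtaining the symmetric \emph{nerve} presentation $[X^\op,\Set]_s\cong\int^{A}[A,X]\times[A^\op,\Set]$; here the two coend projections record exactly a choice of presentation of a small presheaf as $\Lan_i\bar F$ for $i:A\to X$ and $\bar F:A^\op\to\Set$, which is the content already extracted in the proof of \autoref{sopra}.

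To land on the displayed form I would then reindex the coend along the involution $(\firstblank)^\op:\caat\to\caat$. This is an isomorphism of $2$\hyp{}categories, in particular a bijective\hyp{}on\hyp{}objects isomorphism of the $1$\hyp{}category over which we coend, so reindexing by it leaves the colimit unchanged; under $A\mapsto A^\op$ the presheaf factor $[A^\op,\Set]$ becomes the copresheaf factor $[A,\Set]$ appearing in the claim, while the hom factor is transported using the canonical dualities $[C,D]^\op\cong[C^\op,D^\op]$ to the opped hom\hyp{}category displayed on the right. This last passage is purely formal op\hyp{}bookkeeping and I would not belabour it; no genuine verification beyond \autoref{sopra} is needed for it.

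The one non\hyp{}formal point—and the step I expect to be the main (indeed the only) obstacle—is \textbf{legitimacy}: the indexing category $\caat$ is not small, so a priori the coend on the right is a large colimit in $\Cat$ and need not exist. The plan is to dispatch this exactly as the parenthetical remark in the statement suggests, namely by refusing to argue existence internally to the coend and instead reading it off from \autoref{sopra}. Concretely, the chain of isomorphisms above exhibits the putative coend as canonically isomorphic to the \emph{legitimate} category $[X^\op,\Set]_s$; thus it is \autoref{sopra} that supplies the colimiting object, and the density formula only reinterprets that object as a coend. In this way the size difficulty is resolved by appeal to the previous lemma rather than by any cardinality estimate on the diagram, which is precisely why the coend exists despite being indexed over a non\hyp{}small category.
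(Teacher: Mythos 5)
Your overall reading of the corollary matches the paper's: the statement is not given a separate proof there — it is just the density/coend formula for $\Lan_j\bsP$ read off from \autoref{sopra}, with the size issue handled exactly as you handle it, i.e.\ by letting \autoref{sopra} supply the colimiting object $[X^\op,\Set]_s$ instead of arguing that a large colimit exists in $\Cat$. So the skeleton of your argument is the intended one.

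However, the step you explicitly decline to check --- the ``purely formal op\hyp{}bookkeeping'' --- is precisely where the proposal breaks, and it cannot be waved through. The density formula gives
\[
[X^\op,\Set]_s\;\cong\;\int^{A\in\caat}[A,X]\times[A^\op,\Set],
\]
and reindexing along the involution $(\firstblank)^\op$ of $\caat$ turns this into $\int^{A}[A^\op,X]\times[A,\Set]$. Now $[A^\op,X]\cong[A,X^\op]^\op$, but this is \emph{not} isomorphic to the factor $[X,A]^\op$ appearing in the display: objects of $[A^\op,X]$ are functors $A^\op\to X$, while objects of $[X,A]^\op$ are functors $X\to A$ (take $A=1$: one gets $X$ versus $\mathbf{1}$). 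The duality $[C,D]^\op\cong[C^\op,D^\op]$ never exchanges the two arguments of a hom, so no amount of op\hyp{}juggling bridges this. Indeed, as literally written the displayed integrand pairs $(g\colon X\to A,\;H\colon A\to\Set)$, whose composite $Hg$ is a \emph{co}presheaf on $X$, so it cannot present $[X^\op,\Set]_s$; compare with the form $\int^{A}[A,X]\times[(\bsP A)^\op,\Set]_s$ that the paper actually uses in the proof of \autoref{sotto}, which is the standard one. So either you flag the paper's display as having the hom reversed (the defensible position, and consistent with how the corollary is later applied), or you owe an isomorphism $[A^\op,X]\cong[X,A]^\op$ that does not exist; asserting the step and skipping it is a genuine gap.

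A second, smaller point: ``a left extension along $j$ is computed (whenever it exists) by the pointwise density formula'' is false as a general principle --- an existing Kan extension need not be pointwise, and $\Cat$ is not cocomplete enough for the coend formula to be automatic here. What actually makes the corollary true is the concrete content of \autoref{sopra}: every small presheaf is of the form $i_!\bar F$ for some $i\colon A\to X$ and $\bar F\in\psh{A}$, so $(i,\bar F)\mapsto i_!\bar F$ defines a cowedge, and its universality is exactly the density argument of that proof. If the corollary is to be more than a restatement of the lemma, that universality is the one thing to verify.
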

\begin{remark}\label{not-a-skiu}
	It is reasonable to expect $j_!\bsP$ to be the small\hyp{}presheaf construction; this construction is the legitimate version of the Yoneda embedding associated to a (possibly large) category $X$. It is important to stress our desire to exploit the results in \cite{fiore2016relative}, but minding that $\bsP$ has additional structure (their approach qualifies $\bsP$ as a monad, but only in the sense that it is a $j$-pointed functor, endowed with a unit $\myeta : j \to \bsP$ and with a `Kleisli extension' map --instead of a monad multiplication-- sending each $f : jA\to \bsP B$ to $f^\star : \bsP A\to \bsP B$); in view of \autoref{it-is-skiu}, now we would like to say that $[\caat, \Cat]$ is a skew\hyp{}monoidal category with skew unit $j$, and composition $(\bsF,\bsG)\mapsto j_!\bsF\crc \bsG$: this would yield `iterated presheaf constructions' $\bsP \jb\bsP$, $\bsP\jb\bsP\jb\bsP,\dots$, all seen as functors $\caat\to\Cat$. Unfortunately, given a functor $F: \caat \to \Cat$ it is impossible to ensure that $j_!F$ exists in general ($\caat$ is a ${\sf U}^+$-category, and $\Cat$ can't be ${\sf U}^+$-cocomplete), thus --if anything-- the skew\hyp{}monoidal structure of \autoref{it-is-skiu} does not exist globally. Fortunately \emph{some} left extensions --precisely those we need-- exist, so we can still employ the `local' existence of $j_!\bsP$ and its iterates to work as if it was part of a full monoidal structure.

	It also turns out (and this is by no means immediate, see \autoref{coirens}) that the unitors $\lambda_{\bsP} : j\jb\bsP \to \bsP$ and $\varrho_{\bsP} : \bsP \to \bsP\jb j$ and the associator $\gamma_{\bsP} : (\bsP\jb\bsP)\jb\bsP \to \bsP\jb(\bsP\jb\bsP)$ are all invertible.
		
	Apart from their relevance in view of our \autoref{main-theorem}, the preliminary results in this section legitimate the practice to naively consider iterated presheaf constructions: once we consider small functors, the category $[X^\op,\Set]_s$ lives in the same universe of $X$, and so do all categories $\psh{\psh{A}}, \psh{\psh{\psh{A}}}$.
\end{remark}
\begin{lemma}[$j_!\bsP$ preserves itself]\label{sotto}
	There is a canonical isomorphism
	\[
		\tilde\gamma_{\bsP\bsP} : j_!(j_!\bsP \cdot \bsP) \cong j_!\bsP\cdot j_!\bsP.
	\]%; formally, `the functor $j_!\bsP$ preserves itself'.
\end{lemma}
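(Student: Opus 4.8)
The plan is to exhibit $\tilde\gamma_{\bsP\bsP}$ as the canonical comparison 2\hyp{}cell witnessing that the functor $j_!\bsP$ preserves the left extension $j_!\bsP$, and then to prove it invertible by the density argument already used in \autoref{sopra}. First I would record the identifications supplied by \autoref{sopra}: writing $\bsP_s:=j_!\bsP\cong\psh{\firstblank}_s$ for the small\hyp{}presheaf construction on $\Cat$, the functor $\bsP\jb\bsP=j_!\bsP\crc\bsP$ is the assignment $A\mapsto\psh{\psh A}_s$ on $\caat$, while $j_!\bsP\crc j_!\bsP$ sends $X\mapsto\psh{\psh X_s}_s$. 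The key base observation is that for a \emph{small} $A$ every presheaf is already a small colimit of representables, so $\psh A_s=\psh A$ and therefore $(j_!\bsP\crc j_!\bsP)(jA)=\psh{\psh{A}_s}_s=\psh{\psh A}_s=(\bsP\jb\bsP)(A)$. This produces a canonical isomorphism $u:(j_!\bsP\crc j_!\bsP)\crc j\cong\bsP\jb\bsP$.

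Next I would build the comparison. Since $j$ is fully faithful, $u$ exhibits the restriction of $j_!\bsP\crc j_!\bsP$ along $j$; the universal property of the left extension $j_!(\bsP\jb\bsP)$ then produces a unique 2\hyp{}cell $\tilde\gamma_{\bsP\bsP}:j_!(j_!\bsP\crc\bsP)\To j_!\bsP\crc j_!\bsP$ factoring $u$. This is precisely the canonical map asserting that postcomposition with $\bsP_s$ distributes over the extension $\Lan_j$, i.e. the comparison $j_!(\bsP_s\crc\bsP)\To\bsP_s\crc j_!\bsP$.

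It remains to check invertibility, and this is where I would run the density argument of \autoref{sopra} twice. Evaluating at a large $X$, an object of $\psh{\psh X_s}_s$ is a small colimit of representables $\psh X_s(\firstblank,G)$ with $G\in\psh X_s$, and each such $G$ is itself a small colimit $i_!\tilde G$ of representables arising from a small subcategory $i:A\subset X$ with $\tilde G\in\psh A$. Threading these two presentations through both functors shows that each side is computed as the same small colimit, cofinal in the comma category $(j/X)$, so $\tilde\gamma_{\bsP\bsP}$ is an isomorphism; equivalently, $\psh{(\Lan_j\bsP)(X)}_s\cong(\Lan_j(\bsP\jb\bsP))(X)$ naturally in $X$. (Alternatively one may feed the integral presentation in the corollary to \autoref{sopra} into a Fubini computation, but the density route is cleaner.)

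The main obstacle is exactly the one flagged in \autoref{not-a-skiu}: the colimit presenting $j_!\bsP(X)=\psh X_s$ is indexed by the \emph{large} comma category $(j/X)$, and $\bsP_s=\psh{\firstblank}_s$ is cocontinuous only for \emph{small} colimits, so it does not commute with this colimit for purely formal reasons. The crux is therefore to use that every small presheaf is a small colimit of representables (Notation ``Small presheaves'') in order to replace the large colimit by a cofinal small one before applying $\bsP_s$; only after this reduction does the interchange of the two layers of free cocompletion become legitimate, which is why the isomorphism is ``by no means immediate''.
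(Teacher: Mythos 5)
Your construction of the comparison cell is sound: restriction along $j$ of $j_!\bsP\crc j_!\bsP$ is indeed $\bsP\jb\bsP$ (small presheaves on a small category are all presheaves), and the cell you obtain from the universal property is exactly the canonical $\tilde\gamma$ of \ref{s:uno}, i.e.\ the mate of $j_!\bsP * \eta_{\bsP}$. On this point you are actually more careful than the paper, since \autoref{coirens} needs precisely the \emph{canonical} cell to be invertible, not merely the existence of some isomorphism. Your route also genuinely differs from the paper's: the paper does not build a comparison and test invertibility, but directly computes $j_!(j_!\bsP\crc\bsP)(X)$ as the coend $\int^{A\in\caat}[A,X]\times\psh{\bsP A}_s$ --- whose existence is the point of the corollary to \autoref{sopra} --- and identifies it with $\psh{\psh{X}_s}_s$ by Yoneda reduction.

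Your invertibility step, however, contains a genuine error and a circularity. The error: no \emph{small} subcategory of $(j/X)$ can be cofinal when $X$ is properly large --- a small family of functors out of small categories generates a small subcategory $U\subset X$, and any object of $X$ outside (the isomorphism closure of) $U$ gives an object of $(j/X)$ admitting no map into the family --- so ``replace the large colimit by a cofinal small one'' cannot be carried out. What your two-stage presentation actually proves is a per-object and per-morphism statement: every object of $\psh{\psh{X}_s}_s$, being a small colimit of representables $\psh{X}_s(\firstblank,G_d)$ with all $G_d\cong i_!\tilde G_d$ for a \emph{single} small $i : A\subset X$, lies in the image of the stage $\psh{\psh{A}}$; and since the transition functors induced by $i_!$ are fully faithful, morphisms are likewise captured at small stages. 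Directedness of the (large, cofinal) family of small full subcategories of $X$ then makes the colimit a union exhausting the target --- that repaired argument works, but it is not a cofinality argument. The circularity: you invoke the universal property of $j_!(\bsP\jb\bsP)$ and compute it as a colimit over $(j/X)$, yet in this setting (see \autoref{not-a-skiu}) the existence and pointwiseness of left extensions along $j$ are exactly what is in doubt. Either establish them first (as the paper does, via the coend above), or bypass them by verifying directly that $\psh{\psh{\firstblank}_s}_s$ satisfies the universal property of $\Lan_j(\bsP\jb\bsP)$, extending any $\alpha : \bsP\jb\bsP\To H\crc j$ stage by stage exactly as in the proof of \autoref{sopra}; that yields existence, pointwiseness, and invertibility in one stroke.
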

\begin{proof}
	Relying on the previous lemma, we compute the coend
	\[
		j_!(j_!\bsP\cdot \bsP) \cong \int^{A\in \caat} [A,X] \times [(\bsP A)^\op, \Set]_s
	\]
	which is now isomorphic to $[[X^\op,\Set]_s^\op,\Set]_s$ in view of the Yoneda reduction and of \autoref{sopra} (note that the definition of $\llambda A.\bsP A$ and $\llambda A.j_!\bsP A$ entail that $\llambda A.\psh{\bsP A}$ is covariant in $A$).
\end{proof}
\begin{lemma}\label{coirens}
	There are canonical isomorphisms $\lambda_{\bsP} : j\jb \bsP \cong\bsP$, $\varrho_{\bsP} : \bsP \cong \bsP\jb j$ and $\gamma_{\bsP\bsP\bsP} : (\bsP\jb\bsP)\jb\bsP\cong \bsP\jb(\bsP\jb\bsP)$, determined as in \ref{s:uno}--\ref{s:tre}.
\end{lemma}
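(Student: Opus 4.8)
The plan is to regard $\lambda_\bsP,\varrho_\bsP,\gamma_{\bsP\bsP\bsP}$ as the canonical structure maps of the skew-monoidal product $\bsF\jb\bsG=j_!\bsF\crc\bsG$ with unit $j$, as written out in \ref{s:uno}--\ref{s:tre}; since those formulas already pin the maps down, the only thing left to prove is that each one is invertible. The two unitors will reduce to elementary properties of the inclusion $j\colon\caat\subset\Cat$, while the associator is nothing but \autoref{sotto} whiskered by $\bsP$. Throughout I lean on the fact, flagged in \autoref{not-a-skiu}, that the particular left extensions $j_!(\firstblank)$ occurring below do exist even though $j_!$ is not globally defined.

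First I would dispatch the right unitor $\varrho_\bsP\colon\bsP\to\bsP\jb j$. Its $A$-component is the Kan-extension unit $\bsP A\to(\Lan_j\bsP)(jA)=\psh{jA}_s$, using the identification of \autoref{sopra}. As $jA$ is a \emph{small} category, every presheaf $(jA)^\op\to\Set$ is a small colimit of representables, so $\psh{jA}_s=\psh{A}$ and the unit is invertible; this is just the general fact that the unit of a left extension along a fully faithful functor is invertible. Hence $\varrho_\bsP$ is an isomorphism.

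For the left unitor $\lambda_\bsP\colon j\jb\bsP=(\Lan_j j)\crc\bsP\to\bsP$, which is the canonical counit $\Lan_j j\To\mathrm{id}_{\Cat}$ whiskered by $\bsP$, invertibility is equivalent to the density of $j$ in $\Cat$. This holds because $\Delta\subset\caat$ and the nerve $\Cat\to\psh{\Delta}$ is fully faithful, so the full inclusion $j$ is a fortiori dense; consequently $\Lan_j j\cong\mathrm{id}$ pointwise at every object where the defining colimit exists. I would then only check that at $X=\bsP A$ the pointwise formula $(\Lan_j j)(\bsP A)\cong\colim\big((j/\bsP A)\to\caat\xto{j}\Cat\big)$ is exactly the density presentation of $\bsP A$, which it is, so $\lambda_\bsP$ is invertible.

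Finally, I would treat the associator $\gamma_{\bsP\bsP\bsP}$ by unfolding both sides through $\bsF\jb\bsG=j_!\bsF\crc\bsG$:
\[\notag
	(\bsP\jb\bsP)\jb\bsP=j_!(j_!\bsP\crc\bsP)\crc\bsP,\qquad
	\bsP\jb(\bsP\jb\bsP)=j_!\bsP\crc j_!\bsP\crc\bsP,
\]
so that the comparison is precisely the isomorphism $\tilde\gamma_{\bsP\bsP}\colon j_!(j_!\bsP\crc\bsP)\cong j_!\bsP\crc j_!\bsP$ of \autoref{sotto}, whiskered on the right by $\bsP$; being a whiskering of an isomorphism, $\gamma_{\bsP\bsP\bsP}$ is invertible. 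The only genuinely delicate step I anticipate is the coherence bookkeeping: verifying that the abstractly defined structure maps of \ref{s:uno}--\ref{s:tre} really coincide with these explicit coend isomorphisms, and keeping careful track of which instances of $j_!(\firstblank)$ are known to exist. Once \autoref{sopra} and \autoref{sotto} are in hand, the invertibility assertions themselves are immediate.
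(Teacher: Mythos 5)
Your proof is correct and follows essentially the same route as the paper's: invertibility of $\varrho_{\bsP}$ from full faithfulness of $j$, invertibility of $\lambda_{\bsP}$ from density of $j$ (established by exhibiting a dense full subcategory of $\caat$ and using that a full supercategory of a dense subcategory is dense), and identification of the associator with $\tilde\gamma_{\bsP\bsP} * \bsP$, invertible by \autoref{sotto}. The only cosmetic difference is your choice of dense generator — $\Delta\subset\caat$ with the fully faithful nerve $\Cat\to\psh{\Delta}$, where the paper uses the single generic commutative triangle $\triangle[2]$ — and your extra concrete check of $\varrho_{\bsP}$ via \autoref{sopra}, neither of which changes the argument.
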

\begin{proof}
	The isomorphisms come from the unit and associativity constraints of \ref{s:uno}-\ref{s:tre}; as noted in \autoref{adjoints-are-exts}, $\varrho$ is invertible in every component because $j$ is fully faithful, and similarly $\lambda$ is invertible in every component if we show that $j$ is a dense functor. Once we have shown this, \autoref{sotto} above will conclude, since from \eqref{associator} the associator $\gamma_{\bsP\bsP\bsP}$ coincides with the composition $\tilde\gamma_{\bsP\bsP} * \bsP$.

	Now, the functor $j$ is dense, because the full subcategory of $\Cat$ on the generic commutative triangle $\triangle[2]$ is dense. So $\caat$, being a full supercategory of a dense category, is dense.
\end{proof}
\begin{remark}
	We often write $\myeta\bsP$, $\bsP \myeta$, $\mymu\bsP$\dots to denote what should be written as $\myeta \jb\bsP$, $\bsP \jb\myeta$, $\mymu\jb\bsP$\dots{} Keeping in mind \autoref{on-uisge}, that clarifies what is the formal definition for $\jb$-whiskerings, there is non chance of confusion: $\myeta \jb \bsP = j_!\myeta * \bsP$, $\bsP \jb \myeta = j_!\bsP * \myeta$, and similarly for every other whiskering.
\end{remark}
We now would like to prove the following result:
\begin{proposition}\label{set-is-a-ianua}
	The presheaf construction $\bsP = \psh{\firstblank}$ is a lax idempotent $j$-relative monad.
\end{proposition}
In order to do so, we have to define what lax idempotent relative monads are: giving a self\hyp{}contained survey of this matter is the purpose of \autoref{app:relmo} and \autoref{app:contra}. As already said, we only slightly detour from the approach in \cite{fiore2016relative}:%, because since all the left extensions we need exist (and have known universal properties, as shown above in \autoref{sopra}) allows us to ignore the non\hyp{}existence of a global skew\hyp{}monoidal structure on the whole $[\caat,\Cat]$, and to reason `locally'.%and the reason why we employ a notation closer to the existing literature.% but more general, as it `relativizes' the monads.

	A relative monad $T : \cX\to \cY$ is the formal equivalent of a monad in the skew\hyp{}monoidal structure $([\cX,\cY],\jb)$; on $[\caat,\Cat]$, such structure exists `locally' in the components we need allowing us to  work as if $\bsP$ really was a $\jb$-monoid. As already said, \cite{fiore2016relative} does not assume the existence of a multiplication map $\mymu : \bsP\jb\bsP \to \bsP$, replacing it with coherently assigned `Kleisli extensions' to maps $f : jA\to \bsP B$.

	A relative monad is now \emph{lax idempotent} (or a \emph{KZ\hyp{}doctrine}) if it satisfies the 2\hyp{}dimensional analogue of the notion of idempotency; in short, when the algebra structure on an object $A$ is unique up to isomorphism as soon as it exists. Following \cite[2.2]{GARNER20121372}, $\bsP$-algebras as cocomplete categories, thus, when an object is a $\bsP$-algebra it is so in a unique way (this is of course a behaviour of all formal cocompletion monads).
\begin{proposition}[$\bsP$ is a yosegi, I]\label{set-is-ian:uno}
	$\bsP$ is a $j$-relative monad if (as always) $j : \caat \subset \Cat$ is the obvious inclusion.
\end{proposition}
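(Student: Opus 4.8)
The plan is to exhibit $\bsP$ as a monoid in the local skew\hyp{}monoidal structure $([\caat,\Cat],\jb)$ of \autoref{app:relmo}, i.e.\ to produce a unit $\myeta : j\To\bsP$ and a multiplication $\mymu : \bsP\jb\bsP\To\bsP$ satisfying the skew unit and associativity laws relative to the coherence isomorphisms $\lambda_\bsP,\varrho_\bsP,\gamma_{\bsP\bsP\bsP}$ of \autoref{coirens}. The unit is forced: at $A\in\caat$ one has $jA=A$ and $\bsP A=\psh{A}$, so $\myeta_A$ is the Yoneda embedding $y_A : A\to\psh{A}$, natural in $A$ by the coherent isomorphism filling the square of \autoref{presh-notat}. (Equivalently, one may check the three Fiore--Gambino--Hyland--Winskel axioms for the $j$-pointed functor $\bsP$ with Kleisli extension $f\mapsto \Lan_{y_A}f$, which is the lighter route matching \autoref{app:contra}; I describe the multiplication\hyp{}based verification here since it uses the machinery just assembled.)

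For the multiplication, \autoref{sopra} identifies $(\bsP\jb\bsP)(A)=j_!\bsP(\psh{A})\cong[\psh{A}^\op,\Set]_s$, the small presheaves on $\psh{A}$. I would define $\mymu_A:[\psh{A}^\op,\Set]_s\to\psh{A}$ to be the canonical cocontinuous realization functor, namely the left adjoint to the Yoneda embedding $y_{\psh{A}}$, equivalently the pointwise left extension $\Lan_{y_{\psh{A}}}1_{\psh{A}}$; this exists because $\psh{A}$ is cocomplete and the domain contains only \emph{small} presheaves. Naturality of $\mymu$ in $A$ follows from the functoriality of free cocompletion together with the self\hyp{}preservation isomorphism $\tilde\gamma_{\bsP\bsP}$ of \autoref{sotto}, which is exactly what lets $j_!\bsP$ commute with the two iterated constructions one must compare.

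The unit laws are the easy part. Componentwise, $\mymu\crc(\myeta\jb\bsP)=\lambda_\bsP$ and $\mymu\crc(\bsP\jb\myeta)=\varrho_\bsP^{-1}$ reduce to the two standard facts that realizing a representable returns the object itself and that $\Lan_{y_A}y_A\cong 1_{\psh{A}}$ — the density of the Yoneda embedding (axiom \ref{ya:ter}), already used in \autoref{coirens}. Both follow from full faithfulness of $y$ and the pointwise nature of the extensions in play.

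The associativity law $\mymu\crc(\mymu\jb\bsP)=\mymu\crc(\bsP\jb\mymu)\crc\gamma_{\bsP\bsP\bsP}$ is where the real work sits, and it is the main obstacle. Writing out the two triple composites $\bsP\jb\bsP\jb\bsP\To\bsP$ requires identifying $j_!\bsP\cdot j_!\bsP$ with $j_!(\bsP\jb\bsP)$, which is precisely \autoref{sotto}; through this isomorphism the associator factors as $\gamma_{\bsP\bsP\bsP}=\tilde\gamma_{\bsP\bsP}*\bsP$ (as in the proof of \autoref{coirens}), and the equation reduces to the coherent compatibility of nested realizations, i.e.\ to the fact that a small colimit of small colimits may be computed in either order. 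The only genuine subtlety, flagged already in \autoref{not-a-skiu}, is that the skew structure exists merely locally, so one must check at each stage that the left extensions appearing actually exist; the size bookkeeping of \autoref{sopra} and \autoref{sotto} guarantees this for every component we touch, and the proposition follows.
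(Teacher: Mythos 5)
Your proposal is correct, and its skeleton is the paper's (produce a unit and a multiplication and verify the skew-monoid laws, with the Yoneda embeddings as unit), but the two substantive verifications run along genuinely different lines. First, the multiplication: the paper takes $\mymu_A := \bsP^* y_A$, \emph{restriction along the little Yoneda embedding} (whose existence is immediate because $y_A$ is proper, \autoref{salvaculo}), whereas you take the left adjoint to $y_{\bsP A}$, i.e.\ the realization functor $\Theta \mapsto \int^{F\in\bsP A} \Theta(F)\cdot F$. These agree up to canonical isomorphism, since restriction along $y_A$ \emph{is} left adjoint to $y_{\bsP A}$ on small presheaves --- but notice that this agreement is precisely the content of the next proposition (\autoref{set-is-ian:due}, lax idempotency $\mymu_A\dashv\myeta_{\bsP A}$): your choice of definition makes that proposition nearly automatic, at the price of an extra observation that your $\mymu_A$ coincides with $\bsP^*y_A$, which is the form the multiplication is actually used in later (in §\ref{yone-to-yose} one sets $\mymu_A = \bsP^*\myeta_A$ by definition). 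Second, associativity: the paper proves $\bsP_!\mymu_A\cong\mymu_{\bsP A}$ by exhibiting \emph{both} as left adjoints of $\bsP^*\mymu_A$, constructing the unit and counit explicitly via wedges and the Yoneda lemma; you instead reduce to ``nested realizations commute''. That is the right idea, but it is the thinnest point of your write-up: to make it precise, note that both triple composites are cocontinuous and agree on representables (each sends $y(\Theta)$ to $\mymu_A\Theta$, for $\Theta\in\bsP\jb\bsP A$), and then invoke uniqueness of cocontinuous extensions along the dense Yoneda embedding --- which is, in substance, the same uniqueness-of-adjoints argument the paper spells out. One small caution: your parenthetical ``left adjoint to $y_{\bsP A}$, equivalently the pointwise left extension $\Lan_{y_{\bsP A}}1_{\bsP A}$'' is true here, but not for the general reason one might expect (a left adjoint to $g$ is in general the \emph{absolute right} extension $\Ran_g 1$); it holds because the pointwise extension, computed by the coend above, happens to land on the adjoint when the codomain is cocomplete and the domain consists of small presheaves, and this deserves a line of justification.
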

\begin{proof}
	As already noted, the relevant left extensions involved in this proof exist; now, the diagrams we have to check the commutativity of are the following, once we define the Yoneda embedding $y_A : A \to \bsP A$ as unit, and $\bsP^* y_A : \bsP\bsP A\to \bsP A$ (it exists because \autoref{salvaculo}) as multiplication of the desired monad.
	\begin{center}
		\includegraphics[width=\textwidth]{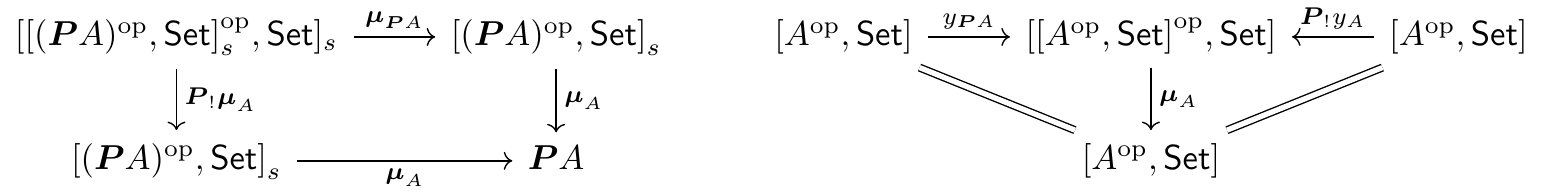}
	\end{center}
	In order to show that they commute, we will exploit the adjunctions $\bsP_!y_A\dashv \bsP^* y_A$ and $\bsP_!\mymu_A \dashv \bsP^*\mymu_A$ (the functor $\bsP^*\mymu_A$ exists because it must coincide with the Yoneda embedding of $\psh{\bsP A}_s$ into $\psh{\psh{\bsP A}_s}_s$; it must act as the Yoneda embedding $Q\mapsto \psh{\bsP A}_s(\firstblank,Q)$, and this evidently lands into the category of small presheaves on $\psh{\bsP A}_s$ when restricted to small functors).

	For what concerns the unit axiom, the commutativity of the left triangle can be deduced from the chain of isomorphisms
	\begin{align*}
		\mymu_{A} \cdot y_{\bsP A}   & \cong   \Lan_{y_{\bsP A} y_A}(y_A) \cdot y_{\bsP A}       \\
		                             & \cong \Lan_{y_{\bsP A}}(\Lan_{y_A}(y_A)) \cdot y_{\bsP A} \\
		(y_{\bsP A} \text{ is f.f.}) & \cong  \Lan_{y_A}(y_A)                                    \\
		(y_A \text{ is dense})       & \cong \text{id}_{\bsP A}.
	\end{align*}
	The right triangle corresponds to the composition
	\[\notag
		\begin{tikzcd}[row sep=0]
			\psh{A} \ar[r] & \psh{\psh{A}}_s \ar[r] & \psh{A}\\
			P \ar[r,mapsto] & (Q\mapsto \Nat(Q,P)) \ar[r,mapsto] & (a\mapsto\Nat(y_A(a),P)\cong Pa)
		\end{tikzcd}
	\]
	which is again isomorphic to the identity of $\bsP A$ thanks to the Yoneda lemma.

	In order to show that the multiplication is associative, we prove that $\bsP_!\mymu_A\cong \mymu_{\bsP A}$ as a consequence of the fact that	there is an adjunction $\mymu_{\bsP}\adjunct{1}{} \bsP \mymu$, having moreover invertible counit. The argument will be fairly explicit, building unit and counit from suitable universal properties of the presheaf construction and from the definition for $\bsP^*\mymu_A$ and $\mymu_{\bsP A}$:
	\begin{itemize}
		\item $\bsP^* \mymu_A$ sends $\zeta : \bsP A^\op \to \Set$ into $\psh{\bsP A}_s(\firstblank,\zeta)$ (it plays the exact same r\^ole of a large Yoneda embedding; this will entail that the counit of the adjunction $\mymu_{\bsP}\dashv \bsP^*\mymu$ is invertible);
		\item $\mymu_{\bsP A}$ acts sending $\llambda F.\Theta(F) \in \psh{\psh{\bsP A}_s}_s$ to $\llambda a.\Theta(\hom(\firstblank,a))$.%\footnote{We employ lambda notation, but since the Greek alphabet is already overloaded with different meaning, we employ a cyrillic \emph{el} to denote the operation of lambda abstraction.}
	\end{itemize}
	With these definitions, the composition $\mymu_{\bsP A}\crc \bsP^* \mymu_A$ is in fact isomorphic to the identity of $\psh{\bsP A}_s$; we not find the unit map: we refrain from showing the zig\hyp{}zag identities as they follow right away from the explicit description of the co/unit.

	The unit will have as components morphisms $\Theta \To \bsP^* \mymu_A(\mymu_{\bsP A}(\Theta))$ natural in $\Theta\in \psh{\psh{\bsP A}_s}_s$: given one of these components, its codomain can be rewritten as the coend
	\begin{align*}
		\bsP^* \mymu_A(\mymu_{\bsP A}(\Theta)) & = \llambda\chi.\psh{\bsP A}_s(\chi,\mymu_{\bsP A}(\Theta))                      \\
		                                       & \cong \llambda\chi.\int_{F\in\bsP A}\Set(\chi(F),\mymu_{\bsP A}(\Theta)(F))     \\
		                                       & \cong \llambda\chi.\int_{F\in\bsP A}\Set(\chi(F),\Theta(\bsP A(\firstblank,F))) \\
		                                       & \leftarrow \llambda\chi.\Theta(\chi)
	\end{align*}
	and we obtain the candidate morphism in the last line as follows: its component at $\chi \in \psh{\bsP A}_s$ must be an arrow
	\[
		\Theta(\chi) \longrightarrow  \int_{F\in\bsP A}\Set(\chi(F),\Theta(\bsP A(\firstblank,F)))
	\]
	which is induced by a wedge
	\[\Theta(\chi)\to \Set(\chi(F),\Theta(\bsP A(\firstblank,F)));
	\]
	such wedge comes from (the mate of) $\Theta^\text{ar}$, $\Theta$'s function on arrows: the Yoneda lemma now entails that such action induces a map
	\[
		\chi(F)\cong \psh{\bsP A}_s(\bsP A(\firstblank,F),\chi) \xto{\;\Theta^\text{ar}\;} \Set(\Theta(\chi),\Theta(\bsP A(\firstblank,F)))
	\] that by cartesian closure can be reported to
	\[
		\Theta(\chi)\longrightarrow \Set(\chi(F),\Theta(\bsP A(\firstblank,F)))
	\] Of course, this is a wedge in $F$, and we conclude.
\end{proof}
\begin{proposition}[$\bsP$ is a yosegi, II]\label{set-is-ian:due}
	The monad $\bsP$ is lax idempotent in the sense of \autoref{def:laxidem}: there exist an adjunction $\mymu_A\adjunct{1}{}\myeta_{\bsP A}$.
\end{proposition}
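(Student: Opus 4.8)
The plan is to identify the multiplication $\mymu_A=\bsP^*y_A$ with the realization (weighted\hyp{}colimit) functor on the cocomplete category $\bsP A=\psh{A}$, and to exhibit this functor as the left adjoint of the Yoneda embedding $\myeta_{\bsP A}=y_{\bsP A}:\bsP A\to\psh{\bsP A}_s$ (the unit of the laxly extended monad $j_!\bsP$ at the large category $\bsP A$). The identity counit demanded by \autoref{def:laxidem} will then be nothing but the left unit law already recorded in \autoref{set-is-ian:uno}.

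First I would unwind $\mymu_A$ in two ways. By \autoref{presh-notat}, $\bsP^*y_A$ is precomposition with $y_A$, so it sends a small presheaf $\Phi\in\psh{\bsP A}_s$ to $\Phi\crc y_A^\op$, whose value at $a\in A$ is $\Phi(y_A a)$. Writing $\Phi$ as a small colimit of representables and applying the co\hyp{}Yoneda reduction through the integral isomorphism of the corollary following \autoref{sopra}, one gets $\Phi(y_A a)\cong\int^{Q\in\bsP A}\Phi Q\cdot\psh{A}(y_A a,Q)\cong\int^{Q}\Phi Q\cdot Q(a)$, so that $\mymu_A\Phi\cong\int^{Q\in\bsP A}\Phi Q\cdot Q$ is the $\Phi$\hyp{}weighted colimit of the identity diagram on $\bsP A$; equivalently $\mymu_A\cong\Lan_{y_{\bsP A}}(1_{\bsP A})$, exactly as in the computation $\mymu_A\cong\Lan_{y_{\bsP A}y_A}(y_A)\cong\Lan_{y_{\bsP A}}(\Lan_{y_A}y_A)$ of \autoref{set-is-ian:uno} together with the density axiom \ref{ya:ter}. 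The smallness of $\Phi$ is what makes this colimit exist inside the correct universe (\autoref{not-a-skiu}), so $\mymu_A$ is legitimately defined.

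Next I would establish the adjunction by a single end/coend manipulation of hom\hyp{}objects: for $\Phi\in\psh{\bsP A}_s$ and $P\in\bsP A$,
\[\notag
  \psh{A}(\mymu_A\Phi,P)\cong\int_{Q\in\bsP A}\Set\big(\Phi Q,\psh{A}(Q,P)\big)\cong\Nat\big(\Phi,\psh{A}(\firstblank,P)\big)=\psh{\bsP A}_s\big(\Phi,\myeta_{\bsP A}P\big),
\]
where the first isomorphism uses the copower description of $\mymu_A\Phi$ together with the fact that $\psh{A}(\firstblank,P)$ carries colimits to limits, and the second is the end formula for natural transformations into the representable $y_{\bsP A}P$. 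This chain is natural in $\Phi$ and $P$, hence exhibits $\mymu_A\dashv\myeta_{\bsP A}$.

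Finally I would fix the normalization. The counit $\mymu_A\myeta_{\bsP A}\To 1_{\bsP A}$ unwinds, by the two descriptions above, to the Yoneda isomorphism $\mymu_A(y_{\bsP A}P)=\psh{A}(\firstblank,P)\crc y_A^\op\cong P$; this is precisely the invertible left unit law $\mymu_A\crc y_{\bsP A}\cong 1_{\bsP A}$ of \autoref{set-is-ian:uno}, and it is invertible because $y_{\bsP A}$ is fully faithful. Normalizing it to the identity yields the adjunction $\mymu_A\adjunct{1}{}\myeta_{\bsP A}$ required by \autoref{def:laxidem}. The only genuinely delicate points are the size bookkeeping that keeps the defining weighted colimit inside the ambient universe, and the identification of restriction along Yoneda with that weighted colimit, i.e. $\bsP^*y_A\cong\Lan_{y_{\bsP A}}(1_{\bsP A})$; once these are in place the remaining content is formal coend calculus.
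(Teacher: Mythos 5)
Your proof is correct, and while it rests on the same two pillars as the paper's argument, it certifies the adjunction in a genuinely different way. Both proofs start from the identification of $\mymu_A=\bsP^*y_A$ with restriction along $y_A$ (so $\mymu_A\Phi\cong\llambda a.\Phi(A(\firstblank,a))$), and both end by observing that $\mymu_A\crc\myeta_{\bsP A}$ is the Yoneda isomorphism $\llambda a.\hom(A(\firstblank,a),F)\cong Fa$ --- your final normalization step is verbatim the paper's computation of the counit. The difference lies in the middle: the paper exhibits the adjunction by explicitly building \emph{both} 2-cells, constructing the unit $\chi\To\myeta_{\bsP A}\mymu_A\chi$ componentwise as the mate of a wedge $Fa\cong\bsP A(\hom(\firstblank,a),F)\to\Set(\chi(F),\chi(\hom(\firstblank,a)))$ coming from the action of $\chi$ on arrows, and (as in \autoref{set-is-ian:uno}) leaves the zig-zag identities implicit. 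You instead recognize $\mymu_A$ as the realization functor $\Lan_{y_{\bsP A}}(1_{\bsP A})\cong\llambda\Phi.\int^{Q\in\bsP A}\Phi Q\cdot Q$ and establish the natural bijection $\psh{A}(\mymu_A\Phi,P)\cong\psh{\bsP A}_s(\Phi,\myeta_{\bsP A}P)$ by coend calculus. What your route buys: an adjunction presented as a natural hom-set isomorphism needs no separate verification of triangle identities, so the step the paper elides is genuinely absent rather than deferred, and the size issues are localized in the existence of two coends, guaranteed by smallness of $\Phi$. What the paper's route buys: it produces the unit as an explicit 2-cell (which is exactly what your bijection assigns to the identity of $\mymu_A\chi$), in a form that does not presuppose the copower/coend presentation of $\mymu_A$ and that matches the explicit style of the co/unit descriptions used in \autoref{set-is-ian:uno}. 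Both arguments are complete up to the same routine naturality checks.
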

\begin{proof}
	The existence of an adjunction $\mymu_A\adjunct{1}{}\myeta_{\bsP A}$. will imply all the equivalent conditions in \autoref{lax-equivs}, that we nevertheless recall in \autoref{lax-equivs-for-P} below for the convenience of the reader. From the definition of these maps, there is a natural candidate to be the counit, and this will be invertible as a consequence of the Yoneda lemma. Indeed, the isomorphism $1 \cong \mymu_A\cdot \myeta_{\bsP A}$ corresponds to the map
	\[
		\llambda a.Fa\mapsto \llambda G.\hom(G,F)\mapsto \llambda a.\hom(A(\firstblank,a),F)\cong Fa.
	\]
	The unit is instead given by the action of a certain functor on arrows, in a similar way as above: given $\chi\in \psh{\bsP A}_s$, there is a canonical map
	\begin{align*}
		\chi(F) & \to \bsP A(F, \llambda a.\chi(\hom(\firstblank,a)))   \\
		        & \cong \int_{a\in A}\Set(Fa,\chi(\hom(\firstblank,a)))
	\end{align*}
	coming from (the mate of) a wedge
	\[
		Fa\cong \bsP A(\hom(\firstblank,a),F) \to \Set(\chi(F), \chi(\hom(\firstblank,a)))
	\]
	defined by the action on arrows of $\chi$.
\end{proof}
%\begin{remark}\label{there-s-adjoint}
%	Since $\bsP^*$ preserves the direction of the adjunctions he acts on, reversing the r\^ole of unit and counit, it is clear how applying it to the adjunction of \autoref{set-is-ian:due} we get an adjunction $\bsP^*\myeta_{\bsP A} = \mymu_{\bsP A}\dashv \bsP^*\mymu_A$ with invertible counit.
%\end{remark}
	The following remark, which is a particular case of \autoref{lax-equivs}, characterizes $\bsP$-algebras as categories whose Yoneda embedding $y_A$ has a left adjoint $\alpha$. These are the \emph{cocomplete} categories \cite[§2]{GARNER20121372}; it is rather easy to see that one of the axioms of $\bsP$-algebras asserts that $\alpha(A(\firstblank,a))\cong a$, and since $\alpha$ is a left adjoint it is uniquely determined by sending a colimit of representables into the colimit in $A$ of representing objects (all such colimits, in particular, exist).
\begin{remark}\label{lax-equivs-for-P}
	It turns out from the general theory of lax idempotent monads, sketched in \autoref{app:contra}, that $\bsP$ satisfies the following equivalent conditions:
	\begin{enumtag}{pl}
		\item \label{pl:uno} for every pair of $\bsP$-algebras $a,b$ and morphism $f :A \to B$, the square
		\[
			\notag\footnotesize
			\begin{tikzcd}[row sep=4mm,column sep=4mm]
				\bsP A \arrow[d,"a"']\ar[r,"\bsP f"] &  \bsP B \arrow[d, "b"] \\
				A \arrow[r,"f"'] & B
			\end{tikzcd}
		\]
		\item \label{pl:due} if $\alpha : \bsP A\to A$ is a $\bsP$-algebra, there is an adjunction $\alpha\dashv \myeta_A$ with invertible counit;
		\item \label{pl:ter} there is an adjunction $\mymu_A\dashv \myeta_{\bsP A}$ with invertible counit.

	\end{enumtag}
	In particular, we have shown condition \ref{pl:ter}; since \ref{pl:due} holds, there is only a possible choice up to isomorphism for a $\bsP$-algebra structure on an object $A$, namely the arrow playing the r\^ole of left adjoint to the Yoneda embedding.
\end{remark}
\subsection{Yosegi boxes}
In the previous section we showed that the functor $\bsP_! : \caat \to \Cat$ defining the presheaf construction in \autoref{presh-notat} enjoys the following properties:
\begin{enumtag}{yb}
	\item \label{yb:uno} every $\bsP_!(f)$ fits into an adjunction $\bsP_!(f)\dashv \bsP^*(f)$;
	\item \label{yb:due} $\bsP_!$ is a relative monad with respect to the inclusion $j : \caat\subset\Cat$;
	\item \label{yb:tre} the monad $\bsP_!$ is lax idempotent.
\end{enumtag}
\begin{definition}[Yosegi box]\label{def:YE}
	Let now $j: \cA \subset \cK$ be the inclusion of a full sub\hyp{}2\hyp{}category of $\cK$: every 2\hyp{}functor $\bsP : \cA \to \cK$ satisfying the same properties \ref{yb:uno}, \ref{yb:due}, \ref{yb:tre}, and such that moreover
	\begin{enumtag}{}
		\item[\textsc{yb}\oldstylenums{0})] the monad $\bsP_!$ has fully faithful unit $\myeta_A$;
		\item[\textsc{yb} -\oldstylenums{1})] The 1\hyp{}cell $\bsP^*\myeta_A$ exists ( its existence is not implied by \ref{yb:uno} because $\myeta_A$ does not lie in $\cA$ in general. Strictly speaking we are saying that the right adjoint to $j_! \bsP_!(\myeta_A)$ exists).
	\end{enumtag}
	will be called a \emph{yosegi box.}\footnote{\emph{Yosegi-zaiku} (\japanese{寄木細工}) is a kind of Japanese marquetry featuring elaborate inlaid and mosaic designs; the amount of data described in \autoref{def:YE} can be thought as a set of tightly linked properties, each of which is rich of adornments and properties.}
\end{definition}
\begin{remark}[Applying a yosegi outside its domain] Whenever we write $\bsP C$, but such an object is not defined (because $C$ lies in $\cK\smallsetminus \cA$), we mean $j_! \bsP C$. This slight abuse of notation makes certain proofs more readable; the tacit convention to allow only the letter $A$ denote an admissible object of a Yoneda structure, or an object in the domain of a yosegi $\bsP$, shuts down many possible sources of confusion.
\end{remark}
\begin{remark}[Nervous $1$-cells]\label{nervous} 
Let $f: A \to B$ be a $1$-cell. When either its domain or codomain does not lie in $\cA$ the map $\bsP^* f$ is not guaranteed to exist by the axiom \ref{yb:uno}. The $1$-cells for which it exists and for all $g: A \to \bsP C$ the \textit{cancellation rule} $$\lan_{\myeta_A}g \circ \bsP^* f \circ \myeta_B \cong \lan_gf $$ holds will be called \textit{nervous} maps. This name is justified by the existence of the nerve construction: in a yosegi the 1-cell $B(f,1)$ exists if and only if  $\bsP^* f$ does, as in such case $\bsP^* f\cong \Lan_{\myeta_B}(B(f,1))$. When $\bsP^* f$ exists, $B(f,1)$ is defined as $\bsP^* f \circ \myeta_B$. To the best of our knowledge, the name \textit{nervous} appeared for the first time in \cite{bourke2018monads}, where the notion of \emph{nervous monad} is studied with a very similar spirit. The concept itself goes back to \cite{bunge} where nervous maps are called \textit{admissible}. \cite{walker}  exploited this notion for the same purpose of our §\ref{equ-to-yone}.
\end{remark}
\begin{remark}[Cancellation rule]\label{crule} 
A compressed and operative version of the cancellation rule above, that will be useful later in the text, is that in the span $\bsP C \xot{g}A\xto{f}  B$ where $f$ is nervous, $\lan_fg$ exists and coincides with $\lan_{\myeta_A}(g) \circ \lan_f(\myeta_A)$.
\end{remark}
\begin{remark}[Yosegi boxes and hyperdoctrines]
	The definition of yosegi box resembles in many ways the definition of \emph{hyperdoctrine} due to Lawvere \cite{lawvere1969adjointness}. There, the author chooses as codomain of a certain 2\hyp{}functor a prescribed family of categories, usually having no connection with $\cA$; this resemblance is not surprising, as \cite{pisani2010logic,pisani2012indexed} shows how a considerable amount of formal category theory can be recovered in the language of hyperdoctrines.
\end{remark}
\subsection{Formal Kan Lemma}
We want to provide a formal analogue of the following well\hyp{}known and very useful result:
\begin{lemma}[Kan lemma]\label{kanlemma}
	Given a span $C \xot{g}A\xto{f} B$ of categories, where $B$ is cocomplete and $A$ is small, the following left extension exists:
	\[
		\begin{tikzcd}
			A \arrow[r, "f"] \arrow[d, "g"'] & B \\
			C \arrow[ur, "\lan_g(f)"'] &
		\end{tikzcd}
	\]
\end{lemma}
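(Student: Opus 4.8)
The plan is to construct $\lan_g f$ by the classical pointwise formula and to verify its universal property by coend calculus, with the two hypotheses entering exactly where one expects them: smallness of $A$ guarantees that the relevant diagram is small, and cocompleteness of $B$ guarantees that its colimit exists. Since the extension so produced is built objectwise it will automatically be \emph{pointwise} in the sense of \autoref{notat:liftext}.

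First I would fix an object $c\in C$ and form the comma category $(g/c)$, whose objects are pairs $(a,\phi\colon ga\to c)$; because $A$ is small and $C$ is locally small each $C(ga,c)$ is a set, so $(g/c)$ is a small category, and the composite $(g/c)\to A\xto{f}B$ has a colimit since $B$ is cocomplete. Equivalently, and more conveniently for the bookkeeping, I would set
\[\notag
	(\lan_g f)(c) := \int^{a\in A} C(ga,c)\cdot fa,
\]
where the copower $C(ga,c)\cdot fa$ is the $C(ga,c)$-indexed coproduct of copies of $fa$ (which exists as $B$ has small coproducts) and the coend is computed as a coequalizer of two maps between small coproducts indexed by the objects and arrows of $A$ (which again exist since $A$ is small and $B$ is cocomplete). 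By functoriality of $c\mapsto C(g\firstblank,c)$ this assignment is functorial in $c$, so that $\lan_g f$ is a genuine 1-cell $C\to B$.

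Next I would exhibit the unit $\eta\colon f\To (\lan_g f)\crc g$, whose component at $a$ is the coprojection into the coend indexed by $1_{ga}\in C(ga,ga)$ (equivalently, the structure map of the colimit over $(g/ga)$ at the object $(a,1_{ga})$). The universal property is then immediate from a chain of coend manipulations: for any $h\colon C\to B$,
\[\notag
	[C,B](\lan_g f, h) \cong \int_{c} B\Bigl(\int^{a} C(ga,c)\cdot fa,\ hc\Bigr) \cong \int_{c}\int_{a}\Set\bigl(C(ga,c), B(fa,hc)\bigr),
\]
using that a hom out of a coend is an end and the copower/hom adjunction. Applying Fubini to interchange the two ends and then the Yoneda reduction $\int_c \Set\bigl(C(ga,c),B(fa,hc)\bigr)\cong B(fa,h(ga))$ collapses this to $\int_a B(fa,h(ga)) = [A,B](f,hg)$. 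Tracking the isomorphism shows it sends $\bar\beta\mapsto(\bar\beta * g)\circ\eta$, which is precisely the assertion that $\langle\lan_g f,\eta\rangle$ exhibits the left extension.

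There is no serious obstacle here---this is the textbook existence theorem for pointwise left Kan extensions---so the only genuine content is the size bookkeeping: the main point requiring care is to confirm that the coend is a \emph{small} colimit, i.e.\ that both the shape of its coequalizer presentation and each copower remain within the class of colimits that cocompleteness of $B$ supplies. Smallness of $A$ is used exactly twice, to bound the size of each comma category $(g/c)$ and of the coequalizer presenting the coend, and it is precisely this finiteness-of-diagram-shape input that the subsequent \emph{formal} analogue will have to replace by an admissibility (propriety) hypothesis on $g$ in the general 2-categorical setting.
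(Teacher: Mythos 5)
Your proof is correct, but note that the paper itself offers no proof of this lemma: it is stated as a classical result, with a pointer to \cite[3.7.2]{Bor1} and the attribution to Kan. What you have written is, in substance, that cited proof --- the pointwise construction via colimits over the comma categories $(g/c)$, packaged as the coend $\int^{a\in A} C(ga,c)\cdot fa$, with smallness of $A$ bounding the diagram shapes and cocompleteness of $B$ supplying the colimits --- so there is no real divergence at the level of this statement, and your size bookkeeping (local smallness of $C$ making each $(g/c)$ small, the coequalizer presentation of the coend) is sound. The genuinely different argument in the paper is the one for the subsequent \emph{Formal Kan lemma}, which replays this result inside a yosegi: there $\lan_g f$ is produced as the composite $L_B \crc \lan_g(\myeta_B f)$, i.e.\ one first extends $\myeta_B\crc f$ along $g$ into the presheaf object $\bsP B$ (this exists because $g$ is \emph{nervous}, \autoref{nervous}, via the cancellation rule of \autoref{crule}) and then applies the $\bsP_!$-algebra structure $L_B : \bsP B\to B$, the formal stand-in for the colimit functor. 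That decomposition buys generality --- no pointwise formula is available in an abstract 2-category --- while yours buys concreteness and lets the universal property be verified by coend calculus. Your closing observation is exactly the right dictionary: smallness of $A$ and the pointwise coend are the $\Cat$-specific inputs that the formal version trades for nervousness of $g$ and an algebra structure on $B$.
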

This result appears for example in \cite[3.7.2]{Bor1}, and it was originally due to Kan. 

A yosegi box is a sufficiently rich structure to re\hyp{}enact a formal analogue of this theorem. As little as this result may appear, this serves as a first motivation for \autoref{def:YE}.
\begin{lemma}[Formal Kan lemma]
	Let $\cK$ be a 2\hyp{}category, and $\bsP : \cA \to \cK$ a yosegi box; given a span $C \xot{g}A\xto{f} B$ of 1\hyp{}cells in $\cK$, where $A\in \cA$, then if in the diagram
	\[
		\begin{tikzcd}
			A \arrow[r, "f"] \arrow[d, "g"'] & B \\
			C \arrow[ru, "\lan_{g}(f)"', dotted]  &
		\end{tikzcd}
	\]
	the object $B$ has a structure of $\bsP_!$-algebra and $g$ is nervous, then the dotted left extension $\lan_gf$ exists.
\end{lemma}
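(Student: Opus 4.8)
The plan is to prove existence of $\lan_g f$ by a two\hyp{}step detour: first push $f$ up into the presheaf object $\bsP B$, where nervousness of $g$ already forces an extension, and then reflect that extension back down to $B$ along the algebra structure; existence and a formula will drop out together. To that end I first record what the algebra hypothesis buys. By \autoref{lax-equivs-for-P} (condition \ref{pl:due}), a $\bsP_!$\hyp{}algebra structure on $B$ amounts to a 1\hyp{}cell $b : \bsP B \to B$ together with an adjunction $b \dashv \myeta_B$ whose counit $b \circ \myeta_B \To 1_B$ is invertible. Equivalently, $\myeta_B$ is fully faithful and exhibits $B$ as a reflective sub\hyp{}object of its free algebra $\bsP B$, with reflector $b$; this reflection is the device that will transport an extension valued in $\bsP B$ down to one valued in $B$.

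Next I set $h := \myeta_B \circ f : A \to \bsP B$ and apply the cancellation rule \autoref{crule} to the span $\bsP B \xot{h} A \xto{g} C$. This is legitimate precisely because $g$ is nervous and $A \in \cA$, and because the codomain of $h$ is a presheaf object, so that $\lan_{\myeta_A} h$ --- the Kleisli extension into the free algebra $\bsP B$ --- exists. The rule then yields that $\lan_g h$ exists and equals $\lan_{\myeta_A}(h) \circ \bsP^* g \circ \myeta_C$, the last two factors being the nerve $C(g,1)$ of \autoref{nervous}. At this point I have an extension of $\myeta_B \circ f$ along $g$ in hand, with no information yet about $\lan_g f$ itself.

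The crux, and the only step requiring genuine care, is the reflection identity $\lan_g f \cong b \circ \lan_g h$. Rather than argue that $b$ preserves the extension (which would force me to know it is absolute or pointwise), I would verify the universal property of $b \circ \lan_g h$ directly by transposition. Its candidate unit $f \cong b \circ \myeta_B \circ f \To (b \circ \lan_g h) \circ g$ is obtained by whiskering the unit of $\lan_g h$ with $b$ and splicing in the counit isomorphism. Given any competitor $k : C \to B$ and $\phi : f \To k \circ g$, I compose three bijections: full faithfulness of $\myeta_B$ turns $\phi$ into a 2\hyp{}cell $\myeta_B \circ f \To (\myeta_B \circ k) \circ g$; the universal property of $\lan_g h$ turns this into $\lan_g h \To \myeta_B \circ k$; and post\hyp{}composition with the adjunction $b \dashv \myeta_B$ turns it into $b \circ \lan_g h \To k$. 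The resulting assignment is a bijection between 2\hyp{}cells $f \To k \circ g$ and 2\hyp{}cells $b \circ \lan_g h \To k$, which is exactly the universal property identifying $b \circ \lan_g h$ with $\lan_g f$. The main obstacle is bookkeeping: checking that this composite bijection is compatible with the chosen units, which is a triangle\hyp{}identity chase for $b \dashv \myeta_B$ together with the full faithfulness of $\myeta_B$.

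As a sanity check that the construction is the intended one, in $\cK = \Cat$ the composite $b \circ \lan_{\myeta_A}(\myeta_B \circ f) \circ C(g,1)$ unwinds to $c \mapsto \int^{a} C(ga,c) \cdot fa$, recovering the pointwise coend of the classical \autoref{kanlemma}: the nerve $C(g,1)$ produces the presheaf $a \mapsto C(ga,c)$, the cocontinuous extension $\lan_{\myeta_A}(\myeta_B \circ f)$ copowers it against the representables $B(\firstblank,fa)$, and $b$ forms the resulting colimit in $B$.
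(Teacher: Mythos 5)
Your proposal is correct and takes essentially the same route as the paper: both build the candidate extension as the algebra structure map $L_B$ (your $b$, left adjoint to $\myeta_B$) composed with $\lan_g(\myeta_B f)$, whose existence comes from nervousness of $g$ via \autoref{crule}. The only difference is cosmetic and in your favor: where the paper justifies the final identification through $L_B \cong \lan_{\myeta_B}(\mathrm{id}_B)$ and a second, rather terse appeal to the cancellation rule, you verify the universal property directly by transposing along $b \dashv \myeta_B$ and the full faithfulness of $\myeta_B$ (note only that the general form of the fact you cite from \autoref{lax-equivs-for-P} is \autoref{lax-equivs}, since \autoref{lax-equivs-for-P} is stated for $\Cat$).
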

\begin{proof}
	The following argument essentially reconstructs \cite[3.7.2]{Bor1}; consider the diagram
	\[
		\begin{tikzcd}
			A \ar[rr,"f"]\ar[dd,"g"']&& B\ar[dd,shift left=1mm, "\myeta_B"] \\
			\\
			 C \ar[rr,"\lan_g(\myeta_Bf)"] && \bsP B \ar[uu, shift left=1mm, "L_B"]\\
		\end{tikzcd}
	\]
	Where $\lan_g(\myeta_Bf)$ exists because $g$ is nervous (see \autoref{crule}), and $\bsP B$ is a shorthand for $j_!\bsP B$. We claim that the  dotted arrow coincides with the composition $L_B \crc \lan_g(\myeta_Bf)$ and together with a naturally determined 2\hyp{}cell exhibits the desired left extension. Recall  that the 1\hyp{}cell $L_B : \bsP B \to B$ is precisely what gives $B$ the structure of a $\bsP_!$-algebra, and that this is exactly what happens for $\bsP = \psh{\firstblank}$ (as already noted, $\bsP_!$-algebras for the presheaf construction of $\Cat$ are the cocomplete categories). The claim follows from the equalities
	\[
		L_B \crc \lan_g(\myeta_Bf) \cong \lan_{\myeta_B}(\text{id}_B) \crc \lan_g(\myeta_Bf) \stackrel{\ref{crule}}{\cong} \lan_g(f). \qedhere
		\]
\end{proof}
\subsection{The adjoint functor theorem}
The adjoint functor theorem represents one of the most applicable results in basic category theory. Any respectable formal category theory should not be exempt from providing a way to ensure that a colimit preserving map is a left adjoint; we present here a formal version of this theorem, maybe equivalent, but apparently more manageable than the one in \cite{street1978yoneda}. 
\begin{theorem}[Formal adjoint functor theorem]\label{AFT}
Let $f: A \to B$ be a $1$-cell of $\bsP_!$-algebras; then the following are equivalent.
\begin{enumerate}
\item $f$ is a left adjoint;
\item $f$ is a nervous map and a morphism of $\bsP_!$-algebras.
\end{enumerate}
\end{theorem}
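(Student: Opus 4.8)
The plan is to prove the two implications separately, leveraging throughout the three adjunctions that the hypotheses put at our disposal. Write $\alpha_A\colon\bsP A\to A$ and $\alpha_B\colon\bsP B\to B$ for the (essentially unique) $\bsP_!$-algebra structures, i.e. the left adjoints $\alpha_A\dashv\myeta_A$ and $\alpha_B\dashv\myeta_B$ with \emph{invertible} counit afforded by \ref{pl:due}; and recall the adjunction $\bsP_!f\dashv\bsP^*f$ from \ref{yb:uno}, together with the description of the nerve $B(f,1)=\bsP^*f\crc\myeta_B$ given in \autoref{nervous}. Almost everything below is bookkeeping with mates of these three adjunctions; the one genuinely analytic point is isolated in the last paragraph.

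For $(1)\Rightarrow(2)$, suppose $f\dashv u$. Since $\bsP$ is a pseudofunctor it carries the adjunction to $\bsP_!f\dashv\bsP_!u$, so the right adjoint $\bsP^*f\cong\bsP_!u$ exists; the cancellation rule of \autoref{nervous} is then purely formal, being the identity $\lan_fg\cong g\crc u$ (left extension along a left adjoint is precomposition with its right adjoint): indeed $B(f,1)=\bsP^*f\crc\myeta_B\cong\myeta_A\crc u$, whence the right-hand side $\lan_{\myeta_A}g\crc\bsP^*f\crc\myeta_B$ collapses, through $\lan_{\myeta_A}g\crc\myeta_A\cong g$, to $g\crc u\cong\lan_fg$. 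That $f$ is moreover a morphism of $\bsP_!$-algebras is the standard fact that any left adjoint between algebras of a lax idempotent monad is automatically a pseudomorphism: the comparison cell $\alpha_B\crc\bsP_!f\To f\crc\alpha_A$ filling the square of \ref{pl:uno} is the mate of an invertible 2-cell, hence invertible. This is exactly the colimit-preservation of left adjoints, and is recorded in \autoref{app:contra}.

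The substance is $(2)\Rightarrow(1)$. Assume $f$ is nervous and a morphism of algebras, and set $u:=\alpha_A\crc B(f,1)=\alpha_A\crc\bsP^*f\crc\myeta_B$. The first step is to observe that the morphism-of-algebras hypothesis upgrades the formal nerve to an honest adjunction on presheaf objects: composing $\bsP_!f\dashv\bsP^*f$ with $\alpha_B\dashv\myeta_B$ gives $\alpha_B\crc\bsP_!f\dashv\bsP^*f\crc\myeta_B=B(f,1)$, and the algebra-morphism isomorphism $\alpha_B\crc\bsP_!f\cong f\crc\alpha_A$ rewrites this as a genuine adjunction $f\crc\alpha_A\dashv B(f,1)$ between $\bsP A$ and $B$. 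Restricting its universal property along $\myeta_A$ and using the invertible counit $\alpha_A\crc\myeta_A\cong\mathrm{id}_A$ recovers precisely the relative adjunction of \ref{ya:due}, confirming that $u$ and $\chi^f$ are the right data.

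It remains to promote $f\crc\alpha_A\dashv B(f,1)$ to $f\dashv u$, and here is the crux. I claim $f\dashv u$ holds as soon as the nerve factors through Yoneda, $B(f,1)\cong\myeta_A\crc u$ — equivalently, as soon as the reflection unit $B(f,1)\To\myeta_A\crc\alpha_A\crc B(f,1)$ is invertible, i.e. $B(f,1)$ lands in the fully faithful (reflective) image of $\myeta_A$. Granting the factorization, the adjunction becomes $f\crc\alpha_A\dashv\myeta_A\crc u$; whiskering its universal property by the fully faithful unit $\myeta_A$ (\autoref{def:YE}) and cancelling via $\alpha_A\crc\myeta_A\cong\mathrm{id}_A$ yields $f\dashv u$ directly. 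The hard part is therefore establishing the factorization, and I expect this to be the main obstacle: it is the formal avatar of the solution-set/representability condition in the classical adjoint functor theorem, and the only place where \emph{both} hypotheses are genuinely consumed. Nervousness, through the pointwise cancellation rule of \autoref{crule}, guarantees that the left extensions in play are computed by the nerve $B(f,1)$ (so that $B(f,1)$ behaves like a continuous, small presheaf), while the colimit-preservation coming from the morphism-of-algebras hypothesis forces the counit of $f\crc\alpha_A\dashv B(f,1)$ to restrict to an isomorphism; together these pin $B(f,1)$ as a fixed point of the idempotent $\myeta_A\crc\alpha_A$, which is the desired factorization.
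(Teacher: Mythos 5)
Your direction $(1)\Rightarrow(2)$ is fine, and is essentially the paper's own argument (the right adjoint $u$ gives $\bsP^*f\cong\bsP_!u$, the nerve becomes $\myeta_A\crc u$, and the algebra\hyp{}morphism part is the standard ``left adjoints are pseudomorphisms'' fact). The problem is $(2)\Rightarrow(1)$. Your reduction is correct as far as it goes: composing $\bsP_!f\dashv\bsP^*f$ with $\alpha_B\dashv\myeta_B$ and rewriting via $\alpha_B\crc\bsP_!f\cong f\crc\alpha_A$ does give $f\crc\alpha_A\dashv B(f,1)$, and \emph{granting} $B(f,1)\cong\myeta_A\crc u$ one does obtain $f\dashv u$ by restricting along the fully faithful $\myeta_A$. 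But the factorization you then need is not a lemma on the way to the theorem --- it \emph{is} the theorem. In $\Cat$ it says that each presheaf $B(f-,b)$ is representable, which is literally the statement that $f$ has a right adjoint; equivalently, it says that $\myeta_A$ preserves the extension $\lan_f(1_A)$, and Yoneda embeddings do not preserve such extensions in general, so nothing formal about the adjunction $f\crc\alpha_A\dashv B(f,1)$ forces its right adjoint to land in the reflective image of $\myeta_A$. (One can check concretely that without the factorization the pair $(f,u)$ inherits only one of the two triangle identities from $f\crc\alpha_A\dashv B(f,1)$; the failing one is exactly where the unit $B(f,1)\To\myeta_A\crc\alpha_A\crc B(f,1)$ would need to be inverted.) Your closing paragraph offers only heuristics for this step: ``forces the counit of $f\crc\alpha_A\dashv B(f,1)$ to restrict to an isomorphism'' is not a defined operation, and no 2\hyp{}cell is exhibited, let alone proved invertible. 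So the proposal has a genuine gap, sitting exactly where all the mathematical content of the adjoint functor theorem lives.

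The paper closes this gap by running the implications in the opposite order, and something of that sort cannot be avoided. It builds the same candidate right adjoint $\lan_f(1_A)\cong\alpha_A\crc B(f,1)$ via the Formal Kan Lemma (using nervousness of $f$ and the algebra structure of $A$), then \emph{computes} that $f$ preserves this extension, namely $f\crc\lan_f(1_A)\cong\lan_f f$, using the algebra\hyp{}morphism isomorphism (in the form $f\crc\alpha_A\cong\lan_{\myeta_A}(f)$) together with the cancellation rule of \autoref{crule}; finally it invokes the standard formal criterion (\cite[Prop.~2]{street1978yoneda}, Dubuc's theorem) that $f$ is a left adjoint if and only if $\lan_f(1_A)$ exists and is preserved by $f$. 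The logical order is: preservation $\Rightarrow$ adjunction $\Rightarrow$ your factorization; your proposal attempts to run the last implication backwards and cannot supply its premise. If you want to keep your architecture, the honest way to finish is to prove the preservation statement exactly as the paper does --- but at that point the adjunction $f\dashv\lan_f(1_A)$ follows directly and the detour through the factorization becomes superfluous.
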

\begin{remark}
Being a morphism of $\bsP_!$-algebras, in the context of ordinary categories and for the classical presheaf construction, means that $f$ preserves colimits, while being a nervous map (\autoref{nervous}) corresponds to a very sharp version of the solution set condition.
\end{remark}
\begin{proof}%[Proof of Theorem \ref{AFT}]
Let's start by $2 \Rightarrow 1)$. By the Formal Kan lemma \autoref{kanlemma}, $\lan_f(1)$ exists (and we can explicitly construct it). Since $f$ is a morphism of $\bsP_!$-algebras, it preserves $\Lan_f(1)$, in fact:

	\begin{align*}
		f \crc \lan_f(1)   & \cong   	f \crc \lan_f(L_A\myeta_A)       \\
		                             & \cong  	f \crc L_A \crc  \lan_f(\myeta_A)  \\
		(f \text{ is a morphism of algebras}) & \cong  \lan_{y_A}(f)    \crc  \lan_f(\myeta_A)                       \\
		(\ref{crule})       & \cong  \lan_ff.
	\end{align*}
 Now (see for example \cite[Prop. 2]{street1978yoneda}) this is equivalent to the fact that $f$ has $\Lan_f1$ as right adjoint. In order to prove the converse implication note that when $f$ has a right adjoint $r$, the map $\myeta_A \circ r$ has the universal property of $B(f,1)$; this implies that $f$ is a nervous map. The proof that $f$ is a morphism of algebras is a reformulation of \cite[Prop. 15]{street1978yoneda}.
\end{proof}

% !TEX root = ../yosegi.tex
\section{Yosegi boxes are Yoneda structures are equipments}
\label{sec:equipments}
We introduce the notion of Yoneda equipment as a refinement of \autoref{def-di-equi}; strengthening the original definition of Wood \cite{wood1982abstract} allows to recover the Yoneda structure, or rather the yosegi, of which the equipment is secretly the free functor in a Kleisli construction.
% We recall the definition of proarrow equipment and its refinements in \autoref{def-di-equi}. Here we propose another refinement of Wood's idea, capable to capture the notion of Yoneda structure.
\begin{definition}[Yoneda equipment]\label{yoda-equipment}
	In a triangle of 2\hyp{}functors
	\[
		\begin{tikzcd}
			& \cA \ar[d,phantom, "\To"]\arrow[rd, "p"] \arrow[ld, "j"'] &  \\
			\cK  & {} & \cM \ar[ll, "u"]
		\end{tikzcd}
	\]
	the inclusion $j : \cA\subset \cK$ is said to admit a \emph{Yoneda equipment} $p \reladjL{j} u$ if
	\begin{itemize}
		\item $p$ is a proarrow equipment \emph{à la Wood}, in the sense of \autoref{def-di-equi};
		\item $p$ is the $j$-relative left adjoint of $u : \cM\to \cK$;
		\item the relative monad $up$, having unit $\myeta : j\To up$ is lax idempotent in the sense of \autoref{def:laxidem} and such unit is fully faithful;
	\end{itemize}
\end{definition}
\begin{remark}
	The inclusion $j : \caat\subset \Cat$ admits a Yoneda equipment via the canonical equipment of $\caat$ with proarrows given by the bicategory $\Prof$ of profunctors
	\[
		\begin{tikzcd}
			& \caat \arrow[rd, "p"] \arrow[ld, "j"'] &  \\
			\Cat &  & \Prof \arrow[ll, "u"]
		\end{tikzcd}
	\]
	where $u$ is the forgetful functor regarding $\bsP A$ as an object of $\Cat$; the unit of the relative adjunction is precisely the Yoneda embedding.

	Intuitively, the above definition already suggests a strategy to prove our main result, and in fact we're only one step far from its proof: the canonical equipment of $\caat$ with proarrows, i.e. the bicategory $\Prof$, can be presented as the Kleisli bicategory of the relative monad $\bsP_!$ (see \cite{fiore2016relative}): naive as it may seem, our main observation is that this is part of a general phenomenon: the Kleisli bicategory of \emph{every} yosegi box $\bsP : \cA \to \cK$ equips $\cA$ with proarrows.
\end{remark}
\begin{theorem}[Main theorem]\label{main-theorem}
	Let $j : \cA\subset \cK$ be the inclusion of a sub\hyp{}2\hyp{}category; the following conditions are equivalent:
	\begin{enumtag}{m}
		\item $\cK$ has a cocomplete Yoneda structure in the sense of \autoref{def-di-yoda}, \autoref{yoda-cocompleta}, with ideal of admissibles $\cA$, and such that $j_!\bsP$ and $\bsP^*\eta$ exist;
		\item the inclusion $j : \cA\subset \cK$ admits a Yoneda equipment $p \reladjL{j} u$;
		\item There exists a yosegi $\bsP :  \cA \subset \cK$ with relative unit $\myeta : j \To \bsP$.
	\end{enumtag}
\end{theorem}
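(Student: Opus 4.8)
The plan is to prove the three conditions equivalent by establishing the cycle $\text{(m1)}\Rightarrow\text{(m3)}\Rightarrow\text{(m2)}\Rightarrow\text{(m1)}$, matching the three subsections announced in the introduction; each arrow is essentially a dictionary rewriting the data of one structure in the language of another, so the work consists in checking that the defining axioms survive the translation. First I would prove $\text{(m1)}\Rightarrow\text{(m3)}$: starting from a cocomplete Yoneda structure I let $\bsP$ be its presheaf construction, with unit $\myeta_A := y_A$ and multiplication $\mymu_A := \bsP^*y_A$ (legitimate since $\bsP^*\eta$ is assumed to exist). Axiom \ref{yb:uno} is precisely the adjunction $\lan_{y_A}f\dashv \bsP B(f,1)$ guaranteed by cocompleteness (\autoref{yoda-cocompleta}); condition \ref{yb:due}, that $\bsP_!$ is a $j$-relative monad, follows by transporting the monad identities through the same computations as in \autoref{set-is-ian:uno}, where associativity uses the invertibility of the associator (\autoref{coirens}) and the unit laws come from axioms \ref{ya:ter}--\ref{ya:qtr} together with the density and full faithfulness of $y_A$; lax idempotency \ref{yb:tre} is the abstract form of \autoref{set-is-ian:due}, the adjunction $\mymu_A\adjunct{1}{}\myeta_{\bsP A}$ being read off the universal property $\Lan_{y_A}y_A=1$. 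The clauses \textsc{yb}0) and \textsc{yb}-1) are respectively the full faithfulness of the Yoneda arrow (axiom \ref{ya:ter}) and the hypothesis on $\bsP^*\eta$.

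Next, $\text{(m3)}\Rightarrow\text{(m2)}$ is the central observation of the paper: from a yosegi $\bsP$ I form the Kleisli bicategory $\cM:=\Kl(\bsP_!)$ of the relative monad, as in \cite{fiore2016relative}, take $p:\cA\to\cM$ the Kleisli inclusion and $u:\cM\to\cK$ the 2\hyp{}functor applying $\bsP_!$ to objects. That $p$ equips $\cA$ with proarrows (\autoref{def-di-equi}) reduces to two points: local full faithfulness \ref{pe:due} comes from the fully faithful unit \textsc{yb}0), and the existence in $\cM$ of a right adjoint to each $pf$ \ref{pe:ter} is the Kleisli avatar of the adjunction $\bsP_!f\dashv\bsP^*f$ of \ref{yb:uno}. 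The relative adjunction $p\reladjL{j}u$ is witnessed by the unit $\myeta:j\To up=\bsP_!$, its exhibiting the absolute lifting of $j$ along $u$ being a direct consequence of the Kleisli universal property; finally $up=\bsP_!$ is lax idempotent with fully faithful unit by \ref{yb:tre} and \textsc{yb}0), which is the last clause of \autoref{yoda-equipment}.

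The delicate implication is $\text{(m2)}\Rightarrow\text{(m1)}$, where I must reconstruct the entire Yoneda structure from the abstract equipment. I put $\bsP:=up$, declare the admissibles to be $\cA$, and take the Yoneda arrow to be the relative unit $y_A:=\myeta_A$. The crucial move is the nerve construction of \autoref{nervous}: for admissible $f:A\to B$ the right adjoint of $pf$ in $\cM$ supplies $\bsP^*f$, and I set $B(f,1):=\bsP^*f\crc\myeta_B$ together with its comparison cell $\chi^f$. I then check \ref{ya:uno}--\ref{ya:qtr} in turn: the extension axiom \ref{ya:uno} and the lifting axiom \ref{ya:due} are repackagings of the relative adjunction $p\reladjL{j}u$ read locally at $f$; axiom \ref{ya:ter} is the unit law of the monad $up$, so that $y_A$ is dense; and the functoriality axiom \ref{ya:qtr} is the multiplication identity. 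Cocompleteness in the sense of \autoref{yoda-cocompleta} is exactly $\bsP_!f\dashv\bsP^*f$, which is \ref{pe:ter} transported back through $u$, while the cancellation rule \autoref{crule} ensures the extensions $\lan_{y_A}f$ exist and are preserved.

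I expect this reconstruction to be the main obstacle: extracting the two extension/lifting axioms \ref{ya:uno}--\ref{ya:due} from a bare relative adjunction requires that the single 2\hyp{}cell $\chi^f$ \emph{simultaneously} witness an absolute left lifting and a pointwise left extension, and keeping the bicategorical coherence of $\cM=\Kl(\bsP_!)$ under control---in particular the associator and unitors of \autoref{coirens}, which hold only locally---while pushing adjunctions back along $u$ is where the genuine care lies. Lax idempotency, through the equivalent conditions recalled in \autoref{lax-equivs-for-P}, is what ultimately forces $\chi^f$ to be the universal cell on both sides, closing the cycle.
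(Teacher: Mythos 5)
Your proposal is correct and takes essentially the same route as the paper: the identical cycle $\text{(m1)}\Rightarrow\text{(m3)}\Rightarrow\text{(m2)}\Rightarrow\text{(m1)}$ proved in §\ref{yone-to-yose}, §\ref{yose-to-equ}, §\ref{equ-to-yone}, with the same constructions (unit $\myeta_A = y_A$ and multiplication $\mymu_A = \bsP^*y_A$ for the yosegi, the Kleisli bicategory $\Kl(\bsP)$ with its free functor as the equipment, and the nerve $B(f,1) = \bsP^*f\crc\myeta_B$ filled by the cell coming from the unit of $\bsP_!f\dashv\bsP^*f$ to rebuild the Yoneda structure). The only cosmetic deviations are that the paper takes the admissibles of the reconstructed structure to be the nervous maps of \autoref{nervous} (a composition\hyp{}closed class) rather than bare $\cA$, and verifies \ref{ya:ter}--\ref{ya:qtr} by a `double counting' of $\bsP^*(1_A)$ and pseudofunctoriality of $\bsP^*$ rather than by citing the monad laws directly, but these amount to the same computations.
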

We use a separate subsection to show each of the following implications:
\[\notag
	\begin{tikzcd}
		& \yose \arrow[Rightarrow,rd,"\text{§}\ref{yose-to-equ}"] &  \\
		\yone \arrow[Rightarrow,ru,"\text{§}\ref{yone-to-yose}"] &  & \equ \arrow[Rightarrow,ll,"\text{§}\ref{equ-to-yone}"]
	\end{tikzcd}
\]
In §\ref{yone-to-yose} we show that the presheaf construction of every cocomplete Yoneda structure defines a yosegi; somehow, the results in this section appear obvious in light of \autoref{sec:yosegi}, as our argument will simply be a formal analogue of \autoref{set-is-a-ianua}. The argument is quite elementary, but as far as we could find, this is an original contribution.

Subsequently, we show in §\ref{yose-to-equ} how the Kleisli bicategory of a yosegi $\bsP$ equips the domain of $\bsP$ with proarrows; again, the argument boils down to check that $\Kl(\bsP)$ satisfies the properties of \autoref{def-di-equi}. \autoref{yoda-equipment} is motivated by the fact that a proarrow equipment obtained as Kleisli bicategory of a yosegi contains enough information to rebuild the yosegi, or rather the Yoneda structure, it comes from. The result in §\ref{yose-to-equ} can be seen as an extension of \cite[4.2]{fiore2016relative} from the canonical Yoneda structure on $\Cat$ to a generic one on $\cK$.

Finally, we show in §\ref{equ-to-yone} how to recover a Yoneda structure from a Yoneda equipment; this is not a surprising result; again, the argument is quite long but completely elementary (it is essentially based on the intuition that equipments and cocomplete Yoneda structures yield equivalent formal category theories), and yet -to the best of our knowledge- it is an original contribution.
\subsection{A Yoneda structure defines a yosegi box}\label{yone-to-yose}
Given a Yoneda structure with admissibles $\cA$, we shall define a lax idempotent 2\hyp{}monad, relative to the inclusion $j : \cA\subset \cK$.

Keeping in mind the content of \autoref{set-is-ian:uno} and \autoref{set-is-ian:due}, the natural candidate is of course the presheaf construction $\bsP$ of the Yoneda structure itself. The correspondence $\bsP : A\mapsto \bsP A$ that sends $f : A\to B$ into $\bsP^*f = \Lan_{y_B\crc f}(y_A)$ defines a pseudofunctor since axioms \ref{ya:ter} and \ref{ya:qtr} hold. When the Yoneda structure is cocomplete, each $\bsP^* f$ has a left adjoint given by the left extension $\Lan_{y_A}(y_B\crc f)$ (see \autoref{nerve-real}). This remark makes the claim that each $f : A\to B$ is sent into an adjunction $\bsP_!f\dashv \bsP^*f$ appear straightforward.

We shall show that $\bsP_!$ defines a lax idempotent relative monad, with fully faithful unit $\myeta : j\To \bsP$, and moreover $\bsP^*\myeta$ and $j_!\bsP$ exist.

Let's start by defining the candidate unit and multiplication:
\begin{itemize}
	\item the Yoneda maps $y_A : A \to \bsP A$ of the Yoneda structure define the unit components $\myeta_A$ of the wannabe monad;
	\item we then define $\mymu_A := \bsP^* \myeta_A : \bsP\bsP A\to \bsP A$.
\end{itemize}
Let us observe that from this definition $\bsP_!\myeta_A \adjunct{\alpha}{\beta}\mymu_A$. Also, $\mymu_A$ is exactly the left extension $\Lan_{\myeta_{\bsP A}\myeta_A}(\myeta_A)$, whence its left adjoint corresponds to $\Lan_{\myeta_A}(\myeta_{\bsP A}\myeta_A)$. This will be a key remark (that's why we label the co/unit of $\bsP_!\myeta_A \dashv\mymu_A$ for future use): in fact, once we have shown that this data gives rise to a monad, it will automatically be lax idempotent.

Let's show that these data determine a monad then: first we show the commutativity of the unit diagrams
\[
	\begin{tikzcd}
		\bsP A \ar[r, "\myeta_{\bsP A}"]\ar[dr,equal]& \ar[d, "\mymu_A"] \bsP\jb\bsP A & \bsP A\ar[l, "\bsP_! \myeta_A"']\ar[dl,equal]\\
		& \bsP A & {}
	\end{tikzcd}
\]
The left triangle commutes, thanks to the chain of isomorphisms
\begin{align*}
	\mymu_A \crc \myeta_{\bsP A} & \cong \Lan_{\myeta_{\bsP A}\myeta_A}(\myeta_A)\crc \myeta_{\bsP A}        \\
	                             & \cong \Lan_{\myeta_{\bsP A}}(\Lan_{\myeta_A}\myeta_A)\crc \myeta_{\bsP A} \\
	                             & \cong \Lan_{\myeta_A}(\myeta_A) \cong 1_{\bsP A}.
\end{align*}
The right triangle commutes thanks to the chain of isomorphisms
\begin{align*}
	\mymu_A \crc \bsP_!(\myeta_A)       & \cong \Lan_{\myeta_{\bsP A}\myeta_A}(\myeta_A)\crc \Lan_{\myeta_A}(\myeta_{\bsP A}\myeta_A) \\
	\text{\autoref{KZ-cocomplete}} & \cong  \Lan_{\myeta_A}(\Lan_{\myeta_{\bsP A}\myeta_A}(\myeta_A)\crc\myeta_{\bsP A}\myeta_A) \\
	                               & \cong \Lan_{\myeta_A}(\myeta_A)  \cong 1_{\bsP A}.
\end{align*}
For what concerns the associativity property of $\mymu$, we have to show the commutativity of
\[
	\begin{tikzcd}
		\bsP\jb\bsP\jb\bsP A\ar[d, "\bsP_! \mymu_A"']  \ar[r,"\mymu_{\bsP A}"]& \bsP\jb\bsP A_s \ar[d, "\mymu_A"] \\
		\bsP\jb\bsP A \ar[r, "\mymu_A"'] & \bsP A
	\end{tikzcd}
\]
Now, the maps $\bsP_!\mymu$ and $\mymu_{\bsP A}$ share the same universal property, exhibiting both the left extension $\Lan_{\myeta_{\bsP\bsP A}\myeta_{\bsP A}}(\myeta_{\bsP A})$.
\begin{corollary}
	We found an adjunction $\mymu_{\bsP A}\dashv \bsP^*\mymu_A$ with invertible counit $\mymu_{\bsP A}\crc \bsP^*\mymu_A\To 1$; this entails that the right adjoint $\bsP^*\mymu_A$ is fully faithful.
\end{corollary}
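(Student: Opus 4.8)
The plan is to extract the adjunction $\mymu_{\bsP A}\dashv\bsP^*\mymu_A$ from the associativity computation just finished, then to identify its right adjoint with a unit component of $\bsP$, so that both the invertibility of the counit and the full faithfulness fall out of the hypothesis that $\myeta$ is fully faithful.

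First I would record what the associativity step has given: $\bsP_!\mymu_A$ and $\mymu_{\bsP A}$ exhibit one and the same left extension $\Lan_{\myeta_{\bsP\bsP A}\myeta_{\bsP A}}(\myeta_{\bsP A})$, hence are canonically isomorphic. By the notational convention that $\bsP^*(\firstblank)$ denotes the right adjoint to $\bsP_!(\firstblank)$, the $1$-cell $\bsP^*\mymu_A$ is the right adjoint of $\bsP_!\mymu_A$ provided by axiom \ref{yb:uno} (its existence being covered by the standing hypothesis that the relevant right adjoints and left extensions exist, which is needed precisely because $\mymu_A$ is not a $1$-cell of $\cA$, its domain $\bsP\bsP A$ lying outside $\cA$). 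Transporting $\bsP_!\mymu_A\dashv\bsP^*\mymu_A$ across the isomorphism $\bsP_!\mymu_A\cong\mymu_{\bsP A}$ then yields the claimed adjunction $\mymu_{\bsP A}\dashv\bsP^*\mymu_A$ with no further work.

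The substance of the corollary is the invertibility of the counit, and here I would invoke lax idempotency. As already observed, the monad structure we have built is automatically lax idempotent, so (cf.\ \autoref{lax-equivs}) each multiplication sits in an adjunction $\mymu_X\dashv\myeta_{\bsP X}$ whose counit is the monad unit isomorphism $\mymu_X\crc\myeta_{\bsP X}\cong 1$, hence invertible. Instantiating at $X=\bsP A$ produces a second right adjoint to $\mymu_{\bsP A}$, namely $\myeta_{\bsP\bsP A}$, again with invertible counit. Essential uniqueness of right adjoints now supplies a canonical isomorphism $\bsP^*\mymu_A\cong\myeta_{\bsP\bsP A}$, along which the invertible counit of $\mymu_{\bsP A}\dashv\myeta_{\bsP\bsP A}$ transports onto $\mymu_{\bsP A}\dashv\bsP^*\mymu_A$; this is the formal shadow of the concrete computation in \autoref{set-is-ian:uno}, where $\bsP^*\mymu_A$ was seen to be the large Yoneda embedding of $\bsP\bsP A$.

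Full faithfulness of $\bsP^*\mymu_A$ then follows at once, either from the standard fact that a right adjoint is fully faithful exactly when its counit is invertible, or directly from $\bsP^*\mymu_A\cong\myeta_{\bsP\bsP A}$ together with the yosegi axiom requiring the unit $\myeta$ to be fully faithful. I expect the only delicate point to be bookkeeping rather than mathematics: one must keep track of variance and indices so that $\myeta_{\bsP\bsP A}$ and $\bsP^*\mymu_A$ are recognized as parallel $1$-cells $\bsP\bsP A\to\bsP\bsP\bsP A$, both right adjoint to $\mymu_{\bsP A}\colon\bsP\bsP\bsP A\to\bsP\bsP A$, which is what legitimises the uniqueness-of-adjoints step, and one must ensure $\bsP^*\mymu_A$ is defined despite $\mymu_A$ lying outside $\cA$ --- exactly the role played by the existence clauses in \autoref{def:YE}.
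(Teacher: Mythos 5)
Your proof is correct and takes essentially the same route the paper intends: the adjunction is obtained by transporting $\bsP_!\mymu_A\dashv\bsP^*\mymu_A$ (whose existence rests on the standing existence hypotheses, since $\mymu_A$ lies outside $\cA$) along the isomorphism $\bsP_!\mymu_A\cong\mymu_{\bsP A}$ coming from the associativity step, and the invertible counit comes from the lax idempotency that the paper has already flagged as automatic once the monad axioms are verified. Your explicit identification $\bsP^*\mymu_A\cong\myeta_{\bsP\bsP A}$ by uniqueness of right adjoints is precisely what the corollary leaves implicit, and it matches the paper's own concrete treatment of the $\Cat$ case in \autoref{set-is-ian:uno}, where $\bsP^*\mymu_A$ is recognized as the large Yoneda embedding of $\bsP\bsP A$.
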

\begin{corollary}
	The functor $\bsP$ is a ($j$-relative) lax idempotent 2\hyp{}monad.
\end{corollary}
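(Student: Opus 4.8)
The plan is to read the statement off the two ingredients already assembled in this subsection. The verification of the unit and associativity laws shows that $(\bsP,\myeta,\mymu)$ is a $j$-relative $2$-monad, so the only genuinely new obligation is lax idempotency. Following the definition adopted in \autoref{def:laxidem}, and in the equivalent form recorded for the classical case in \autoref{lax-equivs-for-P}, condition \ref{pl:ter}, this amounts to producing, for each admissible $A$, an adjunction $\mymu_A\dashv\myeta_{\bsP A}$ with invertible counit. I would therefore isolate this adjunction as the single thing left to check, the monad laws being in hand.

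The counit comes for free: the left unit law proved above supplies the isomorphism $\mymu_A\crc\myeta_{\bsP A}\cong 1_{\bsP A}$, which is exactly the invertible counit required. For the unit $1\To\myeta_{\bsP A}\crc\mymu_A$ I would exploit the key remark that $\mymu_A$ is presented as the left extension $\Lan_{\myeta_{\bsP A}\myeta_A}(\myeta_A)$, together with the already-established adjunction $\bsP_!\myeta_A\dashv\mymu_A$. Since being a left extension of the unit is precisely the structural feature separating lax idempotent relative monads from arbitrary ones, the adjunction $\mymu_A\dashv\myeta_{\bsP A}$ should fall out of the universal property of this extension, in lockstep with the wedge/Yoneda-lemma computation carried out for the presheaf construction in \autoref{set-is-ian:due}; the full faithfulness of $\myeta$ and the density of each $\myeta_A=y_A$ (axiom \ref{ya:ter}, giving $\Lan_{\myeta_A}\myeta_A\cong 1$) are what make the relevant composites cancel.

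Equivalently, and more economically, I would simply invoke the equivalence of conditions established in \autoref{lax-equivs}: the presentation $\mymu_A\cong\Lan_{\myeta_{\bsP A}\myeta_A}(\myeta_A)$ is tantamount to one of the listed characterizations of lax idempotency, so once the monad structure is in place there is nothing further to compute. This is the sense in which, as announced before the statement, lax idempotency is \emph{automatic} for this monad.

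The main obstacle I anticipate is conceptual rather than computational: being a $2$-monad does not by itself force lax idempotency, so the argument must genuinely use the special form of $\mymu_A$ rather than only the unit and associativity laws. Concretely, the delicate point is to verify the triangle identities for $\mymu_A\dashv\myeta_{\bsP A}$ — that the unit built from the extension's universal property is compatible with the invertible counit $\mymu_A\crc\myeta_{\bsP A}\cong 1$ — and to do so without the pointwise colimit arguments available in $\Cat$, relying instead only on density of $\myeta$ and the calculus of left extensions furnished by the cocomplete Yoneda structure.
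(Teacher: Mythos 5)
Your proposal is correct and takes essentially the same route as the paper: the paper's entire proof is the observation that, by the appendix, lax idempotency is equivalent to the adjunction $\bsP_!\myeta_A\dashv\mymu_A$, which holds by construction since $\mymu_A:=\bsP^*\myeta_A$ and the cocomplete Yoneda structure supplies $\bsP_!f\dashv\bsP^*f$ — this is exactly your ``more economical'' fallback. Your preliminary detour through directly building the unit of $\mymu_A\dashv\myeta_{\bsP A}$ (and checking its triangle identities) is superfluous once that equivalence of characterizations is invoked, so nothing beyond the monad laws and the built-in adjunction needs to be computed.
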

\begin{proof}
	Recall from \autoref{app:contra} that $\bsP$ is lax idempotent if and only if $\bsP_!\myeta_A\dashv \mymu_A$; this is precisely a consequence of the definition.
\end{proof}
\subsection{A yosegi defines a proarrow equipment}\label{yose-to-equ}
\leavevmode
We shall show that the Kleisli bicategory of a yosegi is a proarrow equipment, and more precisely the free functor from $\cA$ to the bicategory of free $\bsP$-algebras equips $\cA$ with proarrows. Let us consider the diagram
\[
	\begin{tikzcd}
		& \cA \ar[d,phantom, "\overset{\myeta}\To"]\arrow[rd, "p"] \arrow[ld, "j"'] &  \\
		\cK  & {} & \Kl(\bsP) \ar[ll, "u"]
	\end{tikzcd}
\]
where the relative adjunction $p \reladjL{j} u$ has a relative unit $\myeta : j\To up$. Our claim is that the free functor $p : \cA \to \Kl(\bsP)$ satisfies axioms \ref{pe:due}--\ref{pe:ter} yielding a proarrow equipment; it will be a Yoneda equipment, as a direct consequence of the lax idempotency of $\bsP$ (by construction, the composition $up$ coincides with $\bsP_!$). Of course, the free functor can be thought as the identity on objects: $A\in\cA$ has an associated $\bsP$-algebra
\begin{itemize}
	\item $p$ is a locally fully faithful 2\hyp{}functor: indeed, $p$ acts as $\bsP_!$ on 0\hyp{} and 1\hyp{}cells, so we have isomorphisms
	      \[
		      \begin{array}{c}
			      \Lan_{\myeta_A}(\myeta_B \crc f)\longrightarrow \Lan_{\myeta_A}(\myeta_B \crc g) \\ \hline
			      f \longrightarrow g
		      \end{array}
	      \]
	      as a consequence of $\Lan_{\myeta_A}$ and $y_B$ being fully faithful.
	\item Every $p(f)$ has a right adjoint in $\Kl(\bsP)$: this is evident, since $\bsP$ is a yosegi.
\end{itemize}
\subsection{A Yoneda equipment defines a Yoneda structure}\label{equ-to-yone}
\leavevmode
Quite understandably, this is the most difficult implication.

Let us first recall what we have: a triangle
\[
	\begin{tikzcd}
		& \cA \ar[d,phantom, "\overset{\myeta}\To"]\arrow[rd, "p"] \arrow[ld, "j"'] &  \\
		\cK  & {} & \cM \ar[ll, "u"]
	\end{tikzcd}
\]
such that
\begin{enumtag}{eq}
	\item \label{eq:uno} $p$ is a proarrow equipment in the sense of \autoref{def-di-equi};
	\item \label{eq:due} $\myeta$ is the fully faithful relative unit of a relative adjunction $p \reladjL{j} u$;
	\item \label{eq:ter} the composition $\bsP_! = up$ is a $j$-relative lax idempotent 2\hyp{}monad.
\end{enumtag}
\begin{remark}\label{ideal-of-admissibles}
	First of all we have to define the admissible arrows; we say $f \in \cK$ is admissible if the arrow $\bsP^*(f)$ exists (as always, we adopt the convention that if $B\notin \cA$, then $\bsP B = j_!\bsP B$) and has the property indicated in \autoref{nervous}. Obviously, this class of 1\hyp{}cells is closed under composition, but there's no hope it absorbs on the right. \cite[Thm. 18]{walker} faces the same issue, which indicates it is likely impossible to provide better than a composition\hyp{}closed class of 1\hyp{}cells, instead of a full ideal; since the nervous maps of \autoref{nervous} can play the r\^ole of such composition\hyp{}closed class, we call them as a whole the \emph{nerve} of the Yoneda equipment.
\end{remark}
We will show that $\cK$ has a Yoneda structure having
\begin{itemize}
	\item Yoneda embedding the relative units of \ref{eq:due};
	\item nerve triangles given by
	      \[\label{bieffeuno}
		      \begin{tikzcd}
			      & A \arrow[ld,bend right=20, "f"'] \arrow[rd,bend left=20, "\myeta_A"] &  \\
			      B \arrow[r, "\myeta_B"] & \bsP B \arrow[r, "\bsP^*f"] & \bsP A
		      \end{tikzcd}\]
	      (thus the 1\hyp{}cell $B(f,1)$ is defined as $\bsP^*(f)\crc \myeta_B$);
	\item filled by 2\hyp{}cells $\chi^f$, defined as the pasting
	      \[
		      \label{lecelle}
		      \begin{tikzcd}
			      A \arrow[d, "f"'] \arrow[r, "\myeta_A"] \arrow[rd,phantom, "\scriptstyle\gamma\Swarrow"] & \bsP A \arrow[d, "\bsP_!f"] \arrow[rd,equal, bend left] \arrow[rd, "\Swarrow",phantom] &  \\
			      B \arrow[r, "\myeta_B"'] & \bsP B \arrow[r, "\bsP^*f"'] & \bsP A
		      \end{tikzcd}
	      \]
	      obtained from the unit of $\bsP_!f\dashv\bsP^*f$ and the canonical 2\hyp{}cell $\gamma$.
\end{itemize}
We shall show that these data satisfy the axioms of Yoneda structure as stated in \autoref{def-di-yoda}, \ref{ya:uno}--\ref{ya:qtr}.
\begin{enumtag}{ya}
	\item The triangles in \eqref{lecelle} are left extensions: since $\bsP_! f \cong \Lan_{\myeta_A}(\myeta_B f)$, the left adjoint $\bsP^* f$ can be characterized as $\Lan_{\myeta_B f}(\myeta_A)$. But now this entails that
	\begin{align*}
		B(f,1) & \cong \bsP^* f\crc \myeta_B                          \\
		       & \cong \Lan_{\myeta_B}(\Lan_f(\myeta_A))\crc \myeta_B \\
		       & \cong \Lan_f \myeta_A.
	\end{align*}
	\item The same triangles in \eqref{lecelle} exhibits $(f,\chi^f)$ as $\LIFT_{B(f,1)}\myeta_A$; since $B(f,1)$ is defined as the composition $\bsP^*(f)\crc \myeta_B$, and the left lifting $\Lift_{uv}(w)$ along $u\crc v$ unload as $\Lift_v(\Lift_uw)$, we shall show that
	\[\Lift_{\myeta_B}(\Lift_{\bsP^* f}\myeta_A)\cong f.\]
	In order to check this, it suffices to show that $\Lift_{\bsP^* f}\myeta_A\cong \myeta_B \crc f$; whereas this is true indeed, the fully faithfulness of $\myeta_B$ will imply the result.

	But now, we have $\Lift_{\bsP^* f}(\firstblank)\cong \bsP_!f\crc \firstblank$, and this already concludes:\footnote{It is indeed a general fact that an adjunction $v\dashv u$ induces an adjunction $\Lift_u \dashv v\crc\firstblank$ by uniqueness of adjoints (a similar result suitably dualized, holds for right liftings and extension, where maybe this is more commonly encountered in practice).} indeed
	\begin{align*}
		\bsP_!f\crc \myeta_A & \cong \Lan_{\myeta_A}(\myeta_B f)\crc \myeta_A \\
		                     & \cong \myeta_B \crc f
	\end{align*}
	Now, observe that $\Lift_{\myeta_B}(\Lift_{\bsP^* f}\myeta_A)\cong \Lift_{\myeta_B}(\myeta_B f)$ and that $\myeta_B\crc\firstblank$ is fully faithful and a right adjoint for  $\Lift_{\myeta_B}(\firstblank)$; this implies the existence of an isomorphism $\Lift_{\myeta_B}(\myeta_B f)\cong f$. A similar argument shows that the lifting is absolute, i.e. that $fg\cong \Lift_{\bsP^*f \crc \myeta_B}(\myeta_A\crc g)$ for every $g : X\to A$.
	\item the map $\myeta_A$ is dense, namely $\Lan_{\myeta_A}(\myeta_A)$ exists, is pointwise and isomorphic to the identity $ 1_{\bsP A}$; this follows from functoriality of $\bsP$ by a simple `double counting' argument, evaluating $\bsP^*(1_A)$ in two ways: since $\bsP^*$ is a functor, $\bsP^*(1_A)\cong 1_{\bsP A}$; on the other hand, since $\bsP^*f\cong\Lan_{\myeta_B f}(\myeta_A)$ it follows that $\bsP^*(1_A)\cong \Lan_{\myeta_A}(\myeta_A)$.
	\item There is an isomorphism
	\[\Lan_{gf}(\myeta_A)\cong \Lan_{\myeta_Bf}(\myeta_A)\crc\Lan_g(\myeta_B);\]
	this again follows from functoriality of $\bsP$, together with lax idempotency: the left hand side of the isomorphism equals $\bsP^*(gf)\crc \myeta_A$; on the other side there is $\bsP^* f\crc \bsP^*g\crc \myeta_A$. The two sides are visibly isomorphic, and this concludes the proof.\qed
\end{enumtag}
% !TEX root = ../yosegi.tex
\section{The importance of being $\bsP 1$}
\label{sec:importance}
% \epigraph{One of the more mysterious bits of structure possessed by the 2\hyp{}category $\Cat$ is its duality
% involution.}{\cite[§1]{shulmano2016contra}}
\subsection{Representable yosegi}
\label{sec:repre}
Certainly, the presheaf construction of $\Cat$ satisfies more properties than a generic cocomplete Yoneda structure: here we study some of these properties, and in particular the fact that $\bsP$ is a `semi-representable' functor of the form $\psh[\Omega]{\firstblank}$, a fact that admits an abstraction in \autoref{Unicity}. Note how such a characterization of semi\hyp{}representable yosegi borrows a lot both from the definition of \emph{fibrational cosmos} (where $\bsP$ has a left pseudo\hyp{}adjoint $\bsP^\dag$, the functor we call $\bsQ$ in \autoref{powering-relative}), and the theory of Weber's 2\hyp{}toposes (in \cite{weber2007yoneda} it is shown that $\Set_* \to \Set$ is a classifying fibration). Our axioms sit in the middle between these two poles: the aim of a future work \cite{} will be to clarify which properties of $\Omega$ turn the functor $\psh[\Omega]{\firstblank}$ into a yosegi, endowing $\cK$ with a Yoneda structure even if not with a 2\hyp{}topos structure.

More in detail, the scope of the present section is to show how under very mild assumptions on $\bsP$ and on the embedding $\cA\subset \cK$, the presheaf construction is `determined by its value on $1$', in the sense that there is a natural isomorphism $\bsP A \cong [A^\dual, \bsP 1]$, for a suitable restricted internal hom $[\firstblank, \secondblank] : \cA^\op\times \cK \to \cK$. We freely employ the notion of a duality involution, introduced in \cite{weber2007yoneda} in relation to discrete opfibrations, and formalized in \cite{shulmano2016contra}.
\begin{notation}\label{blanket}
	We make the following assumptions while fixing a notation that will accompany us all along the section:
	\begin{enumtag}{r}
		\item \label{r:uno} $j : \cA\subset \cK$ is the inclusion of a full sub\hyp{}2\hyp{}category $\cA$ of a 2\hyp{}category $\cK$
		\item \label{r:due} $\cK$ has finite limits and a duality involution $(\firstblank)^\dual : \cK^\co \to \cK$ (see \cite{shulmano2016contra} for a definition);
		\item \label{r:tre} finite limits of objects in $\cA$ still lie in $\cA$, and $A^\dual \in \cA$;
		\item \label{r:qtr} $\cA$ is enriched over $\cK$ with a functor $\llbracket \firstblank,\secondblank\rrbracket : \cA^\op\times\cA \to \cK$;
		\item \label{r:cin} for all $A\in \cA$ there is a functor $[A, \firstblank ]: \cK \to \cK$ and an isomorphisms
		\[
			[A \times B, C] \cong [A, [B, C]]			
			% \begin{tikzcd}
			% 	& \mathcal A \arrow[rd, "\firstblank\times A"] \arrow[ld, "j"] &  \\
			% 	\mathcal K \arrow[rr, "{[A,\firstblank]}"'] &  & \mathcal K
			% \end{tikzcd}
		\]
		% making $\firstblank\times A$ the $j$-relative left adjoint of $[A,\firstblank]$;
		% \todo[inline]{Qui c'è un problema: se $\cA$ è chiusa per limiti finiti, $\firstblank\times A$, quando ha dominio $\cA$ deve cadere in $\cA$, e quando però ha dominio $\cK$, non può essere aggiunto $j$-relativo

		% Hai ragione e non so rispondere. So scrivere l'equazione che serve essere vera, essa è $[j(A \times B), C] \cong [jA, [jB, C]]$.}
		\item when evaluated on both objects in $\cA$, there is a natural isomorphism
		\[
			\llbracket A,A'\rrbracket \cong [A,A'].
		\]
	\end{enumtag}
\end{notation}
\begin{remark}
	As a consequence of \ref{r:due} $\cK$ has finite products (and a terminal object).
	In case $\cK$ is cartesian closed, the assumptions in \autoref{blanket} are obviously satisfied: if  $\cK$ is powered over $\cA$, we are asking that the powering and the enrichment coincide.

	Somehow, this setting axiomatizes the properties of $j : \caat\subset\Cat$; its codomain is not a cartesian closed category, but for every $A\in\caat$ and $X\in \Cat$ there is a legitimate category of functors $[A,X]\in\Cat$.
\end{remark}
\begin{definition}[Powering relative left adjoint]\label{powering-relative}
	Under the assumptions in \autoref{blanket}, given a yosegi $\bsP : \cA \to \cK$, its contravariant part $\bsP^*$ defines a functor $\cA^\coop \to \cK$. A \emph{powering relative left adjoint} is now a functor $\bsQ: \cA^\coop \to \cK $ with the property that $[A, \bsQ A']^\dual \cong [A', \bsP^* A]$, naturally in $A,A'\in\cA$.
\end{definition}
\begin{lemma}\label{Unicity}
	In the above assumptions, given a yosegi $\bsP : \cA \to \cK$, the following conditions are equivalent:
	\begin{enumtag}{p}
		\item \label{p:uno} $\bsP^*$ has a powering-relative left adjoint $\bsQ$.
		\item \label{p:due} $\bsP^*$ is \emph{semi\hyp{}representable} by $\bsP^*(1)$, i.e. $\bsP^* A\cong [A^\dual, \bsP^* 1]$.
		\item \label{p:tre}  the functor $\bsP^*(\firstblank^\dual)^\dual$ is the powering relative left adjoint $\bsQ$;

	\end{enumtag}
\end{lemma}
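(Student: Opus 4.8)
The plan is to establish the three equivalences by running the cycle $\ref{p:tre}\Rightarrow\ref{p:uno}\Rightarrow\ref{p:due}\Rightarrow\ref{p:tre}$, of which the first arrow is a tautology — \ref{p:tre} exhibits the concrete functor $\bsP^*(\firstblank^\dual)^\dual$ as a powering relative left adjoint, so \emph{a fortiori} one exists — while the remaining two are short chases of natural isomorphisms built from the defining identity of \autoref{powering-relative}, the currying isomorphism \ref{r:cin}, and the way the duality involution interacts with the restricted internal hom. Before starting I would record two elementary facts from \autoref{blanket}: that $1\in\cA$ (the terminal object is the empty finite limit, so \ref{r:tre} applies), and that $[1,\firstblank]\cong\mathrm{id}_{\cK}$ (the unit law of the closed structure, since $\firstblank\times 1\cong\mathrm{id}$ and $[1,\firstblank]$ is its internal right adjoint by \ref{r:cin}). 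Throughout, besides involutivity $(A^\dual)^\dual\cong A$ coming from \ref{r:due}, I would use the compatibility $[A,B]^\dual\cong[A^\dual,B^\dual]$, natural in both variables; for $\caat\subset\Cat$ this is the familiar $[A,B]^\op\cong[A^\op,B^\op]$, and abstractly it is the coherence of a duality involution with the closed structure in the sense of \cite{shulmano2016contra}.

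For $\ref{p:uno}\Rightarrow\ref{p:due}$ I would instantiate the defining isomorphism $[A,\bsQ A']^\dual\cong[A',\bsP^* A]$ of $\bsQ$ at two special points. Putting $A=1$ and using $[1,\firstblank]\cong\mathrm{id}$ gives $(\bsQ A')^\dual\cong[A',\bsP^* 1]$, whence in particular $(\bsQ 1)^\dual\cong[1,\bsP^* 1]\cong\bsP^* 1$. Putting instead $A'=1$ gives $[A,\bsQ 1]^\dual\cong[1,\bsP^* A]\cong\bsP^* A$; applying hom\hyp{}duality to the left\hyp{}hand side and substituting the value of $(\bsQ 1)^\dual$ just found yields
\[\notag
  \bsP^* A\;\cong\;[A,\bsQ 1]^\dual\;\cong\;[A^\dual,(\bsQ 1)^\dual]\;\cong\;[A^\dual,\bsP^* 1],
\]
which is exactly the semi\hyp{}representability \ref{p:due}; naturality in $A$ is inherited from that of each isomorphism in the chain.

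For $\ref{p:due}\Rightarrow\ref{p:tre}$ I would take the candidate $\bsQ A':=\bsP^*(A'^\dual)^\dual$ and check the defining property of \autoref{powering-relative} directly. First, \ref{p:due} applied to $\bsP^*(A'^\dual)$ together with involutivity gives $\bsP^*(A'^\dual)\cong[A',\bsP^* 1]$, so $\bsQ A'\cong[A',\bsP^* 1]^\dual$. Then the left\hyp{}hand side of the powering identity unfolds, via hom\hyp{}duality, involutivity and currying \ref{r:cin}, as
\[\notag
  [A,\bsQ A']^\dual\;\cong\;[A,[A',\bsP^* 1]^\dual]^\dual\;\cong\;[A^\dual,[A',\bsP^* 1]]\;\cong\;[A^\dual\times A',\bsP^* 1],
\]
whereas the right\hyp{}hand side, using \ref{p:due} on $\bsP^* A$ and currying, unfolds as $[A',\bsP^* A]\cong[A'\times A^\dual,\bsP^* 1]$. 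Symmetry of the cartesian product identifies the two, and since every step is natural in $A$ and $A'$ the functor $\bsQ$ indeed satisfies \autoref{powering-relative}; here one uses \ref{r:tre} to know that $A^\dual\times A'\in\cA$, so that the occurring homs are all defined.

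The individual computations are routine transports of natural isomorphisms; the one genuinely delicate point — and the step I would treat most carefully — is the hom\hyp{}duality compatibility $[A,B]^\dual\cong[A^\dual,B^\dual]$ for the \emph{restricted} internal hom $[\firstblank,\secondblank]:\cA^\op\times\cK\to\cK$. I would have to verify that it is available and natural for $A\in\cA$ and for exactly the objects $B$ that arise above (namely $\bsP^* 1$ and the exponentials $[A',\bsP^* 1]$), and that it is coherent with the currying isomorphism \ref{r:cin}; this is where the hypotheses of \autoref{blanket} and the axioms of a duality involution \cite{shulmano2016contra} are genuinely consumed.
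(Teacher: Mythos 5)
Your proof is correct and takes essentially the same route as the paper's: the same cycle of implications (with \ref{p:tre}~$\Rightarrow$~\ref{p:uno} as the trivial leg), the same specialization at the terminal object to get $(\bsQ 1)^\dual \cong \bsP^* 1$, and the same chains built from the defining isomorphism of \autoref{powering-relative}, currying \ref{r:cin}, and the hom\hyp{}duality compatibility $[A,B]^\dual\cong[A^\dual,B^\dual]$. The only differences are cosmetic --- you meet in the middle at $[A^\dual\times A',\bsP^*1]$ where the paper runs a single chain for \ref{p:due}~$\Rightarrow$~\ref{p:tre} --- and you are in fact more explicit than the paper in flagging that hom\hyp{}duality and $[1,\firstblank]\cong\mathrm{id}$ are genuinely consumed hypotheses.
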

\begin{proof} To prove that \ref{p:uno} implies \ref{p:due}, we first observe that $\bsQ(1)^\dual \cong \bsP^*(1)$, in fact
	\begin{align*}
		\bsQ(1)^\dual \cong [1, \bsQ(1)]^\dual  \cong [1, \bsP^*(1)] \cong  \bsP^*(1).
	\end{align*}
	Now it is enough to follow the chain of isomorphisms
	\begin{align*}
		\bsP^* A & \cong [1,\bsP^* A] \cong [A, \bsQ 1]^\dual                    \\
		         & \cong [A^\dual,  \bsQ( 1)^\dual] \cong [A^\dual,  \bsP^*(1)].
	\end{align*}
	To prove that \ref{p:due} implies \ref{p:tre}, we follow the chain of isomorphisms
	\begin{align*}
		[B,\bsP^* A] & \cong [B, [A^\dual, \bsP^* 1]]       \\
		             & \cong [A^\dual , [B, \bsP^* 1]]      \\
		             & \cong [A, [B, \bsP^* 1]^\dual]^\dual \\
		             & \cong [A, \bsP^*(B^\dual)^\dual].
	\end{align*}
	Finally, \ref{p:tre} trivially implies \ref{p:uno}.
\end{proof}
\begin{remark}
	If any of these two conditions is satisfied, $\bsQ$ and $\bsP$ determine each other under the natural isomorphisms $\bsP(A^\dual)^\dual \cong \bsQ A$ and $\bsQ(A^\dual)^\dual\cong \bsP A$. As a consequence, the functor $\bsQ$ enjoys similar properties:
	\begin{enumtag}{q}
		\item \label{psharp:uno} $\bsQ$ has a powering relative right adjoint;
		\item  \label{psharp:due} $\bsQ$ is \emph{semi\hyp{}representable} by $\bsQ(1) = \bsP(1)^\dual$, i.e. $\bsQ A\cong [A^\dual, \bsQ 1]$.
		\item \label{psharp:tre}  the functor $\bsQ(\firstblank^\dual)^\dual$ is the powering relative right adjoint.
	\end{enumtag}
\end{remark}
\begin{remark}\label{who-s-Q}
	In the case of the canonical Yoneda structure on $\Cat$, $\bsQ$ is the `covariant presheaf construction' $A\mapsto [A,\Set]^\op$, and the covariant Yoneda embedding is obtained from $z_A^\op : A^\op\to [A,\Set]$; the observation that $\bsQ\dashv \bsP$ dates back to Street  and partly motivates the definition of fibrational cosmos (see \cite{StreetCosmoi1974}, but more \cite{street1980cosmoiof}).
\end{remark}
In the following remarks we re\hyp{}enact the essential features of the bicategory of profunctors, showing that the Kleisli bicategory of a representable yosegi enjoys many of the properties of the bicategory $\Prof$.
\begin{proposition}\label{mating-season}
	Let $f : A\leftrightarrows A' : u$ be an adjunction in $\cA$, and let us employ the shorthand $N_f := A'(f,1) : A'\to \bsP A, M_u := A'(1,f) : A\to \bsQ A'$ to denote the extensions in \eqref{triangle-data} and \eqref{right-triangle-data}; then the 1\hyp{}cells $N_f, M_u$ correspond each other under the adjunction isomorphism $\cK(A', \bsP A)\cong \cK(A, \bsQ A')$.
\end{proposition}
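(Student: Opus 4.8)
The plan is to realize the stated hom-set isomorphism as an explicit composite built from the powering relative adjunction of \autoref{powering-relative} together with the self-duality of the terminal object, and then to compute the image of $N_f$ by a single naturality square, reducing the whole statement to the compatibility of the two Yoneda embeddings $\myeta$ and $z$. By \autoref{powering-relative} the defining datum of $\bsQ$ is a natural isomorphism $\theta_{A,A'}\colon [A',\bsP^* A]\cong[A,\bsQ A']^\dual$. Applying $\cK(1,-)$, using the hom-tensor adjunction underlying \ref{r:cin} in the form $\cK(A',C)\cong\cK(1,[A',C])$ (since $1\times A'\cong A'$), and using that the duality involution restricts to a bijection $\cK(1,X^\dual)\cong\cK(1,X)$ on $1$-cells (because $1^\dual\cong 1$), I obtain the natural isomorphism
\[
	\Phi_{A,A'}\colon \cK(A',\bsP A)\;\cong\;\cK(A,\bsQ A'),
\]
which is precisely the adjunction isomorphism in the statement. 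The structural feature I shall exploit is that $\Phi$ is natural in $A$: since $\bsP^*$ and $\bsQ$ are both contravariant, for a $1$-cell $f\colon A\to A'$ of $\cA$ the naturality square turns postcomposition with $\bsP^* f$ on the $\bsP$-side into precomposition with $f$ on the $\bsQ$-side.

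Next I would invoke the explicit factorizations of the two nerves. By \autoref{nervous} the contravariant nerve factors as $N_f=A'(f,1)=\bsP^* f\circ\myeta_{A'}$, while dually the construction of $M_u=A'(1,f)$ through \eqref{right-triangle-data} exhibits it as the restriction $M_u=z_{A'}\circ f$ of the covariant Yoneda embedding $z_{A'}\colon A'\to\bsQ A'$ along $f$ (see \autoref{who-s-Q}; here $z_{A'}$ is the unit of $\bsQ$). Feeding $\myeta_{A'}\in\cK(A',\bsP A')$ into the naturality square for $f$ gives
\[
	\Phi(N_f)=\Phi\bigl(\bsP^* f\circ\myeta_{A'}\bigr)=\Phi(\myeta_{A'})\circ f .
\]
Thus the entire proposition collapses to the single identity $\Phi(\myeta_{A'})=z_{A'}$: granting it, $\Phi(N_f)=z_{A'}\circ f=M_u$, and the adjunction $f\dashv u$ enters only to reinterpret $M_u=A'(1,f)$ as the companion $A(1,u)$ of the right adjoint.

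The main obstacle is therefore exactly this unit case, $\Phi(\myeta_{A'})=z_{A'}$, a compatibility of the two Yoneda embeddings under the duality involution. I would prove it by transporting both $1$-cells into the common hom-object over the representing object $\bsP^* 1$: using the semi-representations $\bsP^* A'\cong[A'^\dual,\bsP^* 1]$ and $\bsQ A'\cong[A'^\dual,\bsQ 1]$ of \autoref{Unicity} together with $\bsQ 1\cong\bsP^*(1)^\dual$, the embedding $\myeta_{A'}$ is carried (via \ref{r:cin}) to a cell $A'\times A'^\dual\to\bsP^* 1$, and $z_{A'}$ to a cell $A'^\dual\times A'\to\bsP^* 1$; under the symmetry $A'\times A'^\dual\cong A'^\dual\times A'$ and the duality coherence both become one and the same canonical hom $1$-cell. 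Since $\Phi$ is, by construction, precisely the re-currying isomorphism that fixes this hom cell, the identity $\Phi(\myeta_{A'})=z_{A'}$ follows.

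The only genuinely delicate bookkeeping lies in this last step: one must thread through the coherence of the duality involution — the self-duality $1^\dual\cong 1$, the invertibility of $(\firstblank)^{\dual\dual}\cong\mathrm{id}$, and the naturality of $\theta$ in both variables — so that the two transports agree on the nose rather than up to an unwanted automorphism of the hom cell. Everything else (the factorizations of $N_f$ and $M_u$, and the passage from $\theta$ to $\Phi$) is formal, so once the unit compatibility is secured the conclusion $\Phi(N_f)=M_u$ is immediate.
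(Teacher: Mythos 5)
Your proposal is correct in outline, but it takes a genuinely different route from the paper. The paper's proof works directly with the unit and counit of the adjunction $\bsQ\dashv\bsP$ (described explicitly as mates of evaluation cells), writes the transpose of $N_f$ as the composite $\bsQ(N_f)\crc\eta_{A,\bsP}$ (and the transpose of $M_u$ as $\bsP(M_u)\crc\varepsilon_{A',\bsP}$), and then concludes by checking that these composites share the universal properties of the defining extensions $N_f=\Lan_f y_A$ and $M_u=\Lan_u z_{A'}$; the adjunction $f\dashv u$ is consumed inside that comparison of universal properties. You instead realize the hom bijection $\Phi$ by currying through the powering data of \autoref{powering-relative}, and reduce the whole statement, via the nerve factorization $N_f=\bsP^*f\crc\myeta_{A'}$ and naturality of $\Phi$ in the contravariant variable, to the single base case $\Phi(\myeta_{A'})=z_{A'}$. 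This is more modular, and it cleanly isolates where the adjunction actually enters: your argument shows $\Phi(N_f)=z_{A'}\crc f$ for \emph{any} admissible $f$, and $f\dashv u$ is needed only to recognize $z_{A'}\crc f$ as the extension $M_u$ of \eqref{right-triangle-data} (extension along an adjoint is absolute and computed by precomposition with the other adjoint). Two soft spots, each comparable in rigor to the paper's own ``it suffices to notice'': first, the base case $\Phi(\myeta_{A'})=z_{A'}$ is only sketched, and it is precisely there that the ambidexterity relation $z_A=y_{A^\dual}^\dual$ and the coherence of the duality involution must be invoked, so it carries the real content of your proof; second, you present $M_u=z_{A'}\crc f$ as read off from \eqref{right-triangle-data}, but that triangle defines $M_u$ as the nerve of $u$, i.e.\ the extension $\Ran_u z_{A'}$ (written $\Lan_u z_{A'}$ in the paper's dual conventions), so the identification $M_u\cong z_{A'}\crc f$ is itself the step where $f\dashv u$ is used, not a definitional restatement --- you acknowledge this in your closing remark, so the content is present, merely with the roles of definition and consequence interchanged.
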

\begin{proof}
	Unit and counit of the adjunction $\bsQ\adjunct{\eta}{\epsilon}\bsP$ are the maps
	\begin{align*}
		\eta : 1 \To \bsP\bsQ,        & \quad \eta_{A,\bsP} \in \cK\Big(A, \big[\big([A,\Omega]^\op\big)^\op,\Omega\big]\Big) = \cK\Big(A,\big[[A,\Omega],\Omega\big]\Big)           \\
		\varepsilon : \bsQ\bsP \To 1, & \quad \epsilon_{A,\bsP} \in \cK^\coop\Big(\big[[A^\op,\Omega],\Omega\big]^\op,A\Big) = \cK\Big( A^\op, \big[[A^\op,\Omega],\Omega\big] \Big)
	\end{align*}
	determined as \emph{mates} of evaluations counits in $\cK$. If we explicitly write down the bijection linking $\cK(A, \bsP A')\cong \cK(A' \bsP A)^\op$ in function of unit and counit $\eta,\epsilon$ the claim of \autoref{mating-season} boils down to the fact that
	\begin{align*}
		M_u & = \Big(A \xto{\eta_{A,\bsP}} \bsQ \bsP A \xto{\bsQ (N_f)} \bsQ A'\Big)       \\
		N_f & = \Big(A' \xto{\varepsilon_{A',\bsP}} \bsP\bsQ A'\xto{\bsP (M_u)}\bsP A\Big)
	\end{align*}
	In order to see that this is true, it suffices to notice that the first composition shares the same universal property of $M_u = \Lan_u z_{A'}$, and the second one does the same for $N_f = \Lan_f y_A$.
\end{proof}
\begin{remark}\label{2-sided-yosegi}
	Let $\bsP$ be semi\hyp{}representable, and let $\bsQ$ denote its left pseudo\hyp{}adjoint as in \autoref{Unicity}; given $f :A\to A'$, there is an isomorphism of categories $\Kl(\bsP)\cong \Kl(\bsQ)$, such that $B(f,1)\dashv B(1,f)$ (letting the identification implicit).
\end{remark}
\begin{remark}[$*$-autonomy of equipments]
	\label{compact-equip}
	In the present remark we want to prove that the equipment generated by a representable yosegi $\bsP$ is a $*$-autonomous category: this is a formal analogue of the fact \cite[§5]{cattani2005profunctors} that the bicategory of profunctors is $*$-autonomous because it is compact closed. In order to reach such a result we have to prove that the Kleisli bicategory $\Kl(\bsP)$
	\begin{itemize}
		\item is symmetric monoidal;
		\item has an involution $(\firstblank)^\star$ such that for each three objects there is an isomorphism
		      \[
			      \cE(A\otimes B, C^\star)\cong \cE(A, (B\otimes C)^\star)
		      \]
	\end{itemize}
	Of course, generic Kleisli bicategories do not have finite limits; in the present case representability of $\bsP$ gives right away that the coproduct $A\coprod_{\cA}B$ has the universal property of $A \times_{\kern-.2em\Kl} B$; indeed, exploiting the description of the objects of $\Kl(\bsP)$ as free $\bsP$-algebras, we have that $\bsP(A\coprod_{\cA} B)\cong \bsP A\times \bsP B$. %, and the projections are algebra morphisms
	It now remains to verify the second condition, of course with respect to the duality involution $(\firstblank)^\dual$: in order to do it, it suffices to recall how 1\hyp{}cells in $\Kl(\bsP)$ are defined: we have the following two chains of isomorphisms.
	\begin{align*}
		\Kl(A\times B, C^\dual)   & \cong \cK(C^\dual, \bsP(A\times B))                         \\
		                          & \cong \cK(C^\dual, [A^\dual, \bsP B])                       \\
		                          & \cong \cK(C^\dual \times A^\dual, \bsP B)                   \\
		                          & \cong \cK(C^\dual \times A^\dual\times B^\dual, \bsP 1)     \\
		\Kl(A, (B\times C)^\dual) & \cong \Kl(A, B^\dual\times C^\dual)                         \\
		                          & \cong \cK(B^\dual\times C^\dual, \bsP A)                    \\
		                          & \cong \cK(B^\dual\times C^\dual\times A^\dual, \bsP 1).\qed
	\end{align*}
\end{remark}
After having studied a particular class of equipments generated by representable yosegi, it is natural to close the circle investigating Yoneda structures induced by representable yosegi. As a consequence of \autoref{Unicity}, the couple $\bsQ, \bsP$ is a powering relative adjunction precisely when one of them is semi-representable; in this sense, studying the representability of $\bsP, \bsQ$ boils down to study the induced adjunction $\bsQ\dashv \bsP$. This leads to the notion of \emph{ambidextrous} Yoneda structure.
\subsubsection{Ambidextrous Yoneda structures}\label{two-side-ys}
Intuitively, an ambidextrous Yoneda structure arises when a Yoneda structure on a 2\hyp{}category $\cK$ is promoted to be the `left part' of a pair of Yoneda structures; the left part defined in \autoref{def-di-yoda}, and the right part, satisfying completely dual axioms that we now outline.

Quite surprisingly, neither the very mild assumptions under which such a structure exists in $\cK$, nor the very natural motivation that the canonical Yoneda structure of $\Cat$ is ambidextrous, ignited a thorough study of such systems. The authors concocted the notion of ambidexterity in a series of private conversations with D. Bernardini, and immensely profited from the draft of a paper systematically studying Yoneda structures \cite{bernardini2017yoneda}.
\begin{definition}[Right Yoneda structure]\label{rightyoneda}
	A \emph{right Yoneda structure} on a 2\hyp{}category $\cK$ consists of \emph{Yoneda data}:
	\begin{enumtag}{yd}
		\item \label{ryd:uno} An ideal $\cJ_{\bsQ}$ of 1\hyp{}cells that we call `right admissible'; arrows in this ideal determine right admissible objects in the class $\cJ_{\bsQ, 0}\subseteq \cK$: $A$ is right admissible if such is its identity 1\hyp{}cell.
		\item \label{ryd:due} Each right admissible object $A$ has a `Yoneda arrow' $z_A : A \to \bsQ A$;
		\item \label{ryd:tre} every right admissible morphism $f : A\to B$ with right admissible domain fits into a triangle
		\[\label{right-triangle-data}
			\begin{tikzcd}
				{} & A\ar[d,phantom,"\sigma_f\Searrow"]\ar[dr, "f"]\ar[dl, "z_A"'] & {} \\
				\bsQ A & {} & \ar[ll, "{B(1,f)}"] B
			\end{tikzcd}
		\]
		filled by a 2\hyp{}cell $\sigma_f : z_A \to B(1,f)\crc f$.
	\end{enumtag}
	These data are	organized to enjoy the following properties:
	\begin{enumtag}{ya}
		\item \label{rya:uno} The pair $\langle B(1,f), \sigma_f\rangle$ exhibits the right extension $\Ran_fz_A$.
		\item \label{rya:due} The pair $\langle f, \sigma_f\rangle$ exhibits the absolute right lifting $\RIFT_{B(1,f)}z_A$.
		\item \label{rya:ter} The pair $\langle 1_{\bsQ A}, 1_{z_A} \rangle$ exhibits  the pointwise right extension $\Ran_{z_A}z_A$.
		\item \label{rya:qtr} Given a pair of composable 1\hyp{}cells $A \xto{f} B\xto{g} C$, the
		pasting of 2\hyp{}cells
		\[
			\begin{tikzcd}[column sep=large, row sep=large]
				A\ar[d, "f"']\ar[rr, "z_A"{name=yonA}] & & \bsQ A\\
				B \ar[r, "z_B"{name=yonB}]\ar[d, "g"'] & \bsQ B\ar[ur, "\bsQ f"']\\
				C\ar[ur, "{C(g,1)}"'] \ar[from=yonA, to=yonB, shorten >=2mm, shorten <=4mm, Rightarrow, "\sigma_{z_B f}"] \ar[from=yonB, shorten >=4mm, shorten <=4mm, Rightarrow, "\sigma_g"]
			\end{tikzcd}
		\]
		exhibits the  extension $\Ran_{gf}y_A = C(1,g\crc f)$.
	\end{enumtag}
\end{definition}
\begin{remark}[A few remarks on the dual axioms]
	Axioms \ref{rya:due} and \ref{rya:ter} appear to have been incorrectly dualized: the contravariant Yoneda embedding $z_A$ is still dense, so why is its \emph{right} extension the identity in \ref{rya:ter}? We shall expect from \ref{rya:due} to assert that $B(1,f) \dashv f$ (relative to $z_A$), so why is the axiom about a left lifting instead? It turns out that the contravariance of $z_A$ plays a r\^ole: indeed, \eqref{right-triangle-data} results from the application of the duality involution to the triangle
	\[\notag
		\label{nonopped-triangle-data}
		\begin{tikzcd}
			{} & A\ar[d,phantom,"\sigma_f\Nwarrow"]\ar[dr, "f"]\ar[dl, "z_A"'] & {} \\
			\bsQ A & {} & \ar[ll, "{B(1,f)}"] B
		\end{tikzcd}
	\]
	and similarly, in axiom \ref{rya:ter} what we see is the image of the isomorphism $\Lan_{z_A}z_A\cong 1_{\bsQ A}$ under $(\firstblank)^\dual$. Thus, we can safely assert that \ref{rya:due} is asking that $B(1,f)\dashv f$ forms a $z_A$-relative adjunction.
\end{remark}
There is no way to ensure, in general, that a left and a right Yoneda structure live together in the same category; yet, this is the most common case (it happens, for example, for the canonical Yoneda structure on $\VCAT$); in fact, each of these 2\hyp{}category is naturally equipped with a duality involution and the Yoneda structure is representable by $\bsP 1 = \cV$; the most natural request is then that the powering relative left adjunction $\bsQ\dashv \bsP$ generates a pair of nicely interacting Yoneda structures.
\begin{definition}[Ambidextrous Yoneda structure]\label{ambidex}
	Let $\cK$ be a 2\hyp{}category.	An \emph{ambidextrous} Yoneda structure consists of a left Yoneda structure (\autoref{def-di-yoda}) and a right Yoneda structure (\autoref{rightyoneda}) on $\cK$ such that
	\begin{itemize}
		\item $\bsQ, \bsP$ share the same admissible (this allows for the existence of both $y_A$ and $z_A$);

		\item The two Yoneda embeddings of $\bsP$ and $\bsQ$ are linked by the relation $z_A = y_{A^\dual}^\dual$; this allows for the `interdefinability' of the left and right extensions using the fact that $\ran_f(g) \cong (\lan_{f^\dual}(g^\dual))^\dual$.
	\end{itemize}
\end{definition}
\subsubsection{A glance at Isbell duality}
In basic category theory, Isbell duality states that there is an adjunction between the category $\psh{A}$ of presheaves and the category $[A,\Set]$ of co\hyp{}presheaves on $A$; somehow, this can be figured as a non\hyp{}additive, multi\hyp{}object version of the  correspondence between the category of left $R$-modules and the category of right $R^\op$-modules, given a ring $R$.

Given a small category $A$, Isbell duality is an adjunction $\cO : \psh{A}\leftrightarrows [A,\Set]^\op : \Spec$ defined by the functors
\begin{gather}
	\cO_A : F \mapsto \llambda a.	\psh{A}(F, y_A(a))\\
	\Spec_A : G \mapsto \llambda a.[A,\Set]^\op(G,z_A(a))
\end{gather}
where $z_A$ is the contravariant Yoneda embedding of \autoref{who-s-Q}. This very definition shows how tightly Isbell duality is linked to the pair of Yoneda embeddings $y_A,z_A$ of a small category into its co\fshyp{}presheaf categories $A$; the aim of the present paragraph is to show how many properties of Isbell duality are to a large extent completely formal.
\begin{lemma}\label{formal-isbe}
	Let $A$ be a small category; the functors $\cO_A\dashv\Spec_A$ of Isbell duality are such that $\cO_A(F)\cong \Lan_{y_A}z_A(F)$ and $\Spec_A(G)\cong\Lan_{z_A}y_A(G)$.

	In other words,	Isbell duality arises from the nerve\hyp{}realization of \autoref{nerve-real} associated to the two Yoneda embeddings $y_A, z_A$.
\end{lemma}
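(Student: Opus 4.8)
The plan is to recognize both displayed isomorphisms as instances of the pointwise formula for left Kan extensions, so that the concrete hom-set definitions of $\cO_A$ and $\Spec_A$ become literally the coend formulas computing the two ``realization'' functors $\Lan_{y_A}z_A$ and $\Lan_{z_A}y_A$. The only genuine care required is bookkeeping of variance: colimits formed in $\bsQ A=[A,\Set]^\op$ are limits in $[A,\Set]$, and it is exactly this passage that converts a weighted colimit into the hom-end appearing in the definition of $\cO_A$.

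First I would treat $\cO_A$. By the pointwise formula, $\Lan_{y_A}z_A(F)\cong\int^{a\in A}\bsP A(y_A a,F)\cdot z_A a$, the copower-weighted coend computed in the target $\bsQ A=[A,\Set]^\op$. Density of $y_A$ (axiom \ref{ya:ter}, i.e. the Yoneda lemma in $\bsP A$, as recorded in \autoref{remaxioms}) gives $\bsP A(y_A a,F)\cong Fa$, so the weight is simply $F$. Since a copower and a coend in $[A,\Set]^\op$ are a power and an end in $[A,\Set]$, evaluating the result at an object $b$ yields $\int_{a\in A}\Set\big(Fa,A(a,b)\big)$. This is precisely the end computing $\psh{A}(F,y_A b)$, that is the value $\cO_A(F)(b)$; hence $\cO_A\cong\Lan_{y_A}z_A$.

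Then I would treat $\Spec_A$ symmetrically. The same pointwise formula gives $\Lan_{z_A}y_A(G)\cong\int^{a\in A}\bsQ A(z_A a,G)\cdot y_A a$, now a genuine coend in $\bsP A$; co-Yoneda collapses it, at an object $b$, to $\bsQ A(z_A b,G)$, which unwinds to $[A,\Set]\big(G,A(b,\firstblank)\big)=\Spec_A(G)(b)$. Alternatively, and more in the spirit of \autoref{ambidex}, this second identity can be deduced from the first purely formally: using $z_A=y_{A^\dual}^\dual$ together with the interdefinability $\ran_f g\cong(\lan_{f^\dual}g^\dual)^\dual$, the realization $\Lan_{z_A}y_A$ is the image under $(\firstblank)^\dual$ of $\Lan_{y_{A^\dual}}z_{A^\dual}$, so the computation already performed for $\cO_{A^\dual}$ transports verbatim to $\Spec_A$.

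The conceptual payoff, rather than an additional obligation, is that both functors are realizations in the sense of \autoref{nerve-real}, whence the Isbell adjunction $\cO_A\dashv\Spec_A$ is nothing but the nerve-realization adjunction, read off from the cocompleteness of $\bsQ A$. The main obstacle is therefore entirely the variance and duality bookkeeping: one must pin down that $z_A$ is covariant into $\bsQ A=[A,\Set]^\op$, and that the hom in $[A,\Set]^\op$ occurring in the definition of $\Spec_A$ reverses so as to match the hom-end produced by the coend. Once these conventions are fixed, both computations reduce to the Yoneda and co-Yoneda lemmas.
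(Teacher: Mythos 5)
Your main line of argument is correct. For the first isomorphism it is the same as the paper's: the pointwise coend formula plus the Yoneda lemma identify $\Lan_{y_A}z_A(F)$ with $\llambda a.\psh{A}(F,y_A(a))=\cO_A(F)$, and your explicit bookkeeping (copowers and coends in $[A,\Set]^\op$ are powers and ends in $[A,\Set]$) is exactly what the paper's ``immediate application of the coend formula'' suppresses. You diverge on the second isomorphism: you compute $\Lan_{z_A}y_A(G)$ directly, collapsing the coend by co-Yoneda to $\bsQ A(z_A b,G)=\Spec_A(G)(b)$, whereas the paper never computes this extension at all: it observes that $\Lan_{y_A}z_A\dashv \bsQ A(z_A,1)\cong\Lan_{z_A}y_A$ by the nerve\hyp{}realization paradigm of \autoref{nerve-real} (together with axiom \ref{ya:uno}, which identifies the nerve with a left extension), and then concludes $\Spec_A\cong\Lan_{z_A}y_A$ by uniqueness of right adjoints to the already-known adjunction $\cO_A\dashv\Spec_A$. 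The trade-off: your route is more self-contained, since it does not presuppose the Isbell adjunction and in fact re-derives it as a nerve\hyp{}realization adjunction (which is precisely the ``in other words'' content of the lemma), while the paper's is shorter and purely formal, hence transports unchanged to the abstract setting of \autoref{isbellone}.

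One genuine slip, though confined to your optional alternative for the second isomorphism: dualization turns left extensions into right ones. By the paper's own interdefinability formula $\ran_f(g)\cong(\lan_{f^\dual}(g^\dual))^\dual$, and since $y_{A^\dual}^\dual=z_A$ and $z_{A^\dual}^\dual=y_A$, one gets $(\Lan_{y_{A^\dual}}z_{A^\dual})^\dual\cong\Ran_{z_A}y_A$, not $\Lan_{z_A}y_A$. So the computation already performed for $\cO_{A^\dual}$ does not transport ``verbatim'': it would compute a right extension, which is not what the statement asks for. Since your direct co-Yoneda computation already settles the claim, this does not invalidate the proposal, but that sentence should be deleted or corrected.
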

\begin{proof}
	An immediate application of the coend formula for a left Kan extension shows that $\Lan_{y_A}{z_A}(F)\cong \llambda a.	\psh{A}(F, y_A(a))$; now, uniqueness of a right adjoint plus \autoref{nerve-real} entail that there is an isomorphism $\Lan_{z_A}y_A(G)\cong \llambda a.[A,\Set]^\op(G,z_A(a))$.
\end{proof}
This motivates the following definition.
\begin{definition}\label{isbellone}
	Let $\bsQ\dashv\bsP$ be an adjunction associated to the representable yosegi $\bsP$; if $A$ is an object of $\cA\subset \cK$, there is an adjunction $\cO_A\dashv \Spec_A$ obtained as
	\[
		\begin{tikzcd}
			{} & A\ar[dl, "y_A"']\ar[dr, "z_A"] & {} \\
			\bsP A \ar[rr, shift left=1mm, "\cO_A"]& {} & \bsQ A \ar[ll, shift left=1mm, "\Spec_A"]
		\end{tikzcd}
	\]
	taking the left extensions $\cO_A = \Lan_{y_A}(z_A)$ and $\Spec_A = \Lan_{z_A}(y_A)$.
\end{definition}
We say that a presheaf $F\in\psh{A}$ is \emph{Isbell self\hyp{}dual} if it is fixed by the monad $\Spec_A\crc\cO_A$ or (equivalently) if it is fixed by the comonad $\cO_A\crc\Spec_A$. Fully faithfulness of the Yoneda embeddings entail that all representable co/presheaves are Isbell self\hyp{}dual; this has an evident formal analogue in light of \autoref{formal-isbe}.
\subsection{Formal enrichment and profunctors}\label{sec:formal}
The following long remark shows how in a cocomplete Yoneda structure, or better, in its associated yosegi, there is a `calculus of profunctors'; somehow, this is to be expected: it can be seen as an explicit presentation of the Yoneda equipment generated by the yosegi, as the bicategory of profunctors $\Prof_{\bsP}(\cA)$ coincides with the Kleisli bicategory of $\bsP$, regarded as a relative monad, in the sense of \autoref{kleibi}.
\begin{remark}\label{for-enrich}
	In a representable yosegi $\bsP : \cA \to \cK$, units are maps in $\cK$ of the form $A \to [A^\dual, \bsP 1]$; the product $A^\dual\times A$ is admissible in the Yoneda structure generated by $\bsP$, so we can consider the admissible maps
	\[
		\fh : A^\dual\times A\to \bsP 1.
	\]
	These maps play the r\^ole of \emph{internal homs}, so that admissible objects are -not exactly, but in a clearly evocative way- $\bsP1$\emph{-enriched}: for every small category $A$ composition maps are given by a family of functions
	\[
		c_{abc} : A(a,b)\times A(b,c)\to A(a,c)
	\]
	such that $c_{a,\firstblank,c}$ is a cowedge, and in fact an initial one.
\end{remark}
A `semi-classical' point of view on the situation is the following: to characterize $\fh_A$ as a certain coend, we shall write it as a certain weighted colimit (because co/ends are precisely hom\hyp{}weighted co/limits).

By definition of what is a weighted colimit in a Yoneda structure \cite[§4]{street1978yoneda}, the fact that $\int^{x}A(a,x)\times A(x,a')\cong A(a,a')$ means that the left lifting of $y_{A^\op\times A}$ along $\bsP 1(\fh_A,1)$, is $\fh_A$, and such lifting is absolute; this yields that $\fh_A \reladjL{y_{A^\op\times A}} \bsP 1(\fh_A,1)$; this final request means that the triangle
\[
	\begin{tikzcd}
		& A^\op\times A \ar[d,phantom, "\overset{\chi^{\fh_A}}\To", description]\arrow[ld, "y_{A^\op\times A}"'] \arrow[rd, "\fh_A"] &  \\
		\bsP(A^\op\times A) & {} & \bsP 1 \arrow[ll, "{\bsP 1(\fh_A,1)}"]
	\end{tikzcd}
\]
is an absolute left lifting; this is precisely axiom \ref{ya:due} applied to $\fh_A$.

It is worth to investigate how far this reconstruction procedure can go, in order to recover a full-fledged coend calculus: in view of this, $\fh_A$ will play the same r\^ole of the hom functor $\hom_A$ and thus the above isomorphism, expressing $\fh_A$ as `the coend' of $\fh_A \times \fh_A$, can be read as the isomorphism $\hom_A\diamond \hom_A\cong \hom_A$ witnessing that $\hom_A$ is the identity profunctor $A \overset{\hom_A}\pto A$.
\begin{remark}
	The 1\hyp{}cell $\bsP 1(\fh_A,1)$ admits a left adjoint $\Lan_{y_{A^\dual\times A}} \fh_A$; computing this Kan extension in $\Cat$ we get
	\begin{align*}
		\Lan_{y_{A^\dual\times A}} \fh_A(F) & \cong \int^{A^\dual\times A} [A^\dual\times A,\Set](y(a,a'), F)\times \fh_A(a,a') \\
		                                    & \cong \int^{A^\dual\times A} F(a,a')\times A(a,a')                                \\
		                                    & \cong \int^A F(a,a).
	\end{align*}
	This entails that the adjunction $\Lan_{y_{A^\dual\times A}} \fh_A\dashv \bsP 1(\fh_A,1)$ formalizes the adjunction $\int^A : [A^\dual\times A, \Set] \leftrightarrows \Set : \hom\pitchfork\firstblank$, where the left adjoint $\int^A$ takes the coend of a functor $F : A^\op\times A\to \Set$, and the right adjoint is $\hom\pitchfork X : (a,a')\mapsto X^{\hom(a,a')}$ (this is a general fact: taking the colimit weighted by $W$ is left adjoint to powering with the weight).

	It is now straightforward to go further: axiom \ref{ya:due} entails that in a Yoneda structure an admissible object $A$ for which $A\times A^\op$ is still admissible (this translates, for the corresponding yosegi, into $\cA$ being closed under product and duality involution), there are absolute left liftings
	\[
		\begin{tikzcd}
			& A^\op\times A \ar[d,phantom, "\To", description]\arrow[ld, "y_{A^\op\times A}"'] \arrow[rd, "\fh_A"] &  \\
			\bsP(A^\op\times A) & {} & \bsP 1 \arrow[ll, "{\bsP 1(\fh_A,1)}"]
		\end{tikzcd}
	\]
	in which $\bsP 1(\fh_A, 1)$ has a left adjoint, precisely $\Lan_{y_{A^\dual\times A}}\fh_A$. Such a left adjoint is the functor that `coends' an endo\hyp{}profunctor $F : A^\lor\times A\to \bsP 1$.
\end{remark}
\begin{definition}[The bicategory of $\bsP$-profunctors]
	Let $\cK$ be a 2\hyp{}category, and $\bsP : \cA\to \cK$ a yosegi on $\cK$; then, we define the bicategory $\Prof_{\bsP}(\cA)$ of $\bsP$-profunctors as follows:
	\begin{itemize}
		\item 0\hyp{}cells are those in $\cA$;
		\item 1\hyp{}cells $f : A \pto B$ are the 1\hyp{}cells $f : B\to \bsP A$ in $\cK$; when $\bsP$ is representable, the 1\hyp{}cells of $\Prof_{\bsP}(\cA)$ correspond to 1\hyp{}cells $B\times A^\dual \to \bsP 1$ by mating;
		\item 2\hyp{}cells $\alpha : f\To g$ are all those in $\cK$.
	\end{itemize}
	The composition law $A\overset{f}\pto B\overset{g}\pto C$ is defined, reminding profunctor composition, as follows: given $f : B\to \bsP A$ and $g : C\to \bsP B$ we define the composition $g \bullet_{\bsP} f$ using the diagram
	\[
		\begin{tikzcd}
			{} & B\ar[r, "f"]\ar[d, "y_B"'] & \bsP A\\
			C \ar[r, "g"']& \bsP B \ar[ur, "\bsP_!f"']& {}
		\end{tikzcd}
	\]
	as $\bsP_! f\crc g$. Clearly, axiom \ref{ya:ter} of the Yoneda structure generated by $\bsP$ entails that $y_A : A\to \bsP A$ is the identity of $A\in \Prof_{\bsP}(\cA)$; for what concerns associativity, we have to show that $(h\bullet_{\bsP} g)\bullet_{\bsP} f \cong h\bullet_{\bsP} (g\bullet_{\bsP} f)$. This follows from the computation in \autoref{kleibi}, once we recognize that $\Prof_{\bsP}(\cA)$ is no more no less than the Kleisli bicategory of $\bsP$ regarded as a relative monad.
\end{definition}
\subsubsection{Coend calculus}\label{coendz}
It is natural to wonder whether there is a full coend calculus on $\cA$, induced by the yosegi $\bsP$, and not only a composition rule for internal profunctors. In order to obtain it we have to further refine the structure on the 2\hyp{}category $\cK$ and make its interaction with $\bsP$ more rigid and well-behaved. We have essentially made a further enlargement $\cA\subset \cK \subset \cK'$, in analogy with the change of universe $\caat\subset\Cat\subset\CAT$.

We assume that $\cK'$ is cartesian closed (like it happens for $\CAT$); given an admissible 1\hyp{}cell
$f: A \to B$, the map $\Lan_{y_A}(f)$ can be thought as the formal analogue of $\firstblank \otimes f = \colim_{\firstblank}(f)$, i.e. as the functor $f\mapsto \Lan_{y_A}(f)(W)$. The upshot of this subsection is that when $\cK'$ is cartesian closed, we can internalize the above construction. We shall define a functor
\[\colim_{\firstblank}(\firstblank):\bsP(A) \times [A,B] \to B\]
The Yoneda embedding $y: A \to\bsP(A)$ induces an inverse image $y^*: [P(A), B] \to [A,B]$, whose left adjoint, in case it exists, we call $L$.

Now, the mate of $L$ under the cartesian closure of $\cK$ is precisely the weighted colimit functor, or rather the functor $\colim_{\firstblank}(f)$. Such a left adjoint can't be proved to exist in full generality, when $B$ is cocomplete,  we can hope $y^*$ to be nervous to apply the formal Kan lemma and compute $\Lan_{y^*}(\text{id}_{B^A})$.

In order to recover coend calculus, we shall find a functor
$\int : [A^\dual \times A, B] \stackrel{\int}{\to} B$. To this end,we shall employ $\fh_A : A^\dual \times A \to\bsP(1)$ and its mate, a generic point $\tilde{\fh}_A :1 \to\bsP(A^\dual \times A)$. We can thus define the map
\[
	\textstyle\int: [A^\dual \times A, B] \cong  1 \times [A^\dual \times A, B] \stackrel{h \times \text{id}}{\to}\bsP(A^\dual \times A) \times [A^\dual \times A, B] \stackrel{\colim_{\firstblank}(\firstblank)}{\longrightarrow} B
\]
by composition.
\appendix
%!TEX root = ../yosegi.tex
\section{Relative monads on 2-categories}\label{app:relmo}
Provided the left extension along a given functor $J : \cX\to \cY$ exists, the functor category $[\cX,\cY]$ becomes \emph{skew\hyp{}monoidal} \cite{szlachanyi2012skew} with respect to the functor $\jb : [\cX,\cY]\times[\cX,\cY]\to [\cX,\cY]$ sending $F,G$ to $J_!F\crc G$.%, and has $J$ as a skew unit. 

In the present section we offer a characterization of relative monads as $\jb$-monoids: at the price of lower generality than the treatment in \cite{fiore2016relative}, we can provide a formal analogue for a similar statement than the one given for $\cK = \Cat$ in \cite{altenkirch2010monads}; while extremely useful a reference, the proof in \cite{altenkirch2010monads} does not apparently admit a reformulation in an abstract 2-category. 

In this respect the present appendix has an intrinsic interest, because it provides a full formal proof of existence of the aforementioned skew\hyp{}monoidal structure; although relatively elementary an argument, it revealed to be rather tedious in the choice of notation --and fixing this notation in a separate section allows us to make the exposition look much more streamlined elsewhere.

Our treatment of relative monads is less general than the one in \cite{fiore2016relative}, since in that paper there is no assumption regarding the existence and good behaviour of the left extension functor $J_!$; in full generality, relative monads truly don't have a multiplication map. Fortunately, as observed in our \autoref{not-a-skiu}, `all the structure we need exists locally', since the iterated left extensions of $\bsP$ exist.%, to the point that $j_!\bsP$ is a true monad on $\Cat$.
\begin{defn-prop}\label{it-is-skiu}
Let $J : \cX \to \cY$ be a functor such that the left extension along $J$ exists, defining a functor $J_! : [\cX, \cY]\to [\cY,\cY]$; then the category $[\cX,\cY]$ becomes a \emph{left skew-monoidal category} under the skew multiplication defined by
\[
	(F,G) \mapsto F\jb G = {J_!}F\crc G;
\]
there are natural maps of \emph{association}, \emph{left} and \emph{right unit}
\[\label{coherence-data}
	\begin{tikzcd}[column sep=large, row sep=0mm]
		(F\jb G)\jb H \ar[r, "\gamma_{FGH}"] & F\jb(G\jb H)\\
		J\jb F \ar[r, "\lambda_F"] & F\\
		F \ar[r, "\varrho_F"] & F\jb J
	\end{tikzcd}
\]
such that the following diagrams are commutative:
\begin{enumtag}{skm}
	\item \label{skm:uno}\emph{skew associativity}:
	\[
		\begin{tikzcd}[column sep=2cm]
			((F\jb G)\jb H)\jb K \ar[r,"\gamma_{FG,H,K}"]\ar[d, "\gamma_{F,G,H}\jb K"']& (F\jb G)\jb (H\jb K) \ar[dd, "\gamma_{F,G,HK}"]\\
			(F\jb (G\jb H))\jb K \ar[d, "\gamma_{F,GH,K}"']& {}\\
			F\jb ((G\jb H)\jb K) \ar[r, "F \jb \gamma_{G,H,K}"'] & F\jb (G\jb (H\jb K))
		\end{tikzcd}
	\]
	\item \label{skm:due}\emph{skew left and right unit}:
	\[
		\begin{tikzcd}[column sep=2cm]
			(F\jb G)\jb J\ar[r, "\gamma_{F,G,J}"] \celtag[pos=.1,dr]{\textsc{2r}}& F\jb (G\jb J)\\
			F\jb G\ar[u, "\varrho_{F\jb G}"]\ar[ur, "F\jb \varrho_G"'] \celtag[pos=.9,dr]{\textsc{2l}} & F\jb G\\
			(J\jb F)\jb G \ar[r, "\gamma_{J,F,G}"']\ar[ur, "\lambda_F\jb G"]& J\jb(F\jb G)\ar[u, "\lambda_{F\jb G}"']
		\end{tikzcd}
	\]
	\item \label{skm:tre}\emph{zig\hyp{}zag identity}:
	\[
		\begin{tikzcd}[column sep=small]
			& J\jb J \ar[dr, "\lambda_J"]& \\
			J \ar[ur, "\varrho_J"]\ar[rr,equal]&& J
		\end{tikzcd}
	\]
	\item \label{skm:qtr}\emph{interpolated zig\hyp{}zag identity}:
	\[
		\begin{tikzcd}[column sep=large]
			(F\jb J)\jb G \ar[rr, "\gamma_{F,J,G}"]&& F\jb (J\jb G) \ar[d, "F \jb \lambda_G"]\\
			F\jb G \ar[rr,equal] \ar[u, "\varrho_F\jb G"]&& F \jb G
		\end{tikzcd}
	\]
\end{enumtag}
\end{defn-prop}
The skew monoidal structure, i.e. the natural maps $\gamma,\lambda,\varrho$ is defined as follows:
\begin{enumtag}{s}
	\item \label{s:uno} Given $F,G,H\in [\cX, \cY]$, the cell $\gamma_{F,G,H}$ is defined by $\tilde \gamma_{F,G} * H$, where $\tilde\gamma_{F,G}$ is obtained as the mate of $J_!F * \eta_G$ under the adjunction $J_!\adjunct{\epsilon}{\eta} J^*$: the arrow
	\[\label{associatore} J_!F\crc G \xrightarrow{J_!F * \eta_G} J_!F\crc J^*J_!G  = J^*(J_!F\crc J_!G)\]
	mates to a map $J_!(J_!F\crc G) \longrightarrow J_!F\crc J_!G$.
	\item \label{s:due} Given $F\in [\cX, \cY]$, the cell $\lambda_F : J\jb F \To F$ is defined by the whiskering $\sigma * F$, where $\sigma = \sigma_{1_{\cY}}$ is the counit of the density comonad of $J$;
	\item \label{s:tre} Given $G\in [\cX,\cY]$, the cell $\varrho_G : G \to G\jb J$ is the $G$-component $\eta_G$ of the unit of the adjunction ${J_!} \dashv J^*$
\end{enumtag}
\begin{remark}\label{we-re-better-than-uustalu}
	A complete proof of \autoref{it-is-skiu} appears as Theorem 3.1 in \cite{altenkirch2010monads}; the main argument is however heavily relying on the assumption that $\cK = \Cat$, and thus it is of little use here; we thus produce a formal proof \emph{ex novo}, while explicitly recording some useful equations satisfied by the data defining the skew monoidal structure in study; the structure maps of the skew monoidal structure are entirely induced by the $J_!\adjunct{\eta}{\varepsilon} J^*$ adjunction and from (the equations satisfied by) its unit and counit.

	So, we adopt the following notation, and employ the following equations:
	\begin{enumtag}{e}
		\item \label{e:zer} we denote $\sb{\varpi} : J_!U \to V$ the \emph{mate} of $\varpi : U \to J^*V$ under the adjunction $J_! \adjunct{\eta}{\varepsilon} J^*$; similarly, we denote $\chi^\bullet : U \to J^*V$ the mate of $\chi : J_!U \to V$; in this notation, the bijection
		\[[\cX, \cY](U, J^*V)\cong [\cY, \cY](J_!U, V)\]
		reads as $(\sb{\varpi})^\bullet = \varpi$ e ${}^\bullet(\kern-.1em\chi^\bullet)=\chi$.
		\item \label{e:uno} the first zig\hyp{}zag identity for the adjunction $J_! \adjunct{\eta}{\varepsilon} J^*$ is $(\varepsilon_B * J)\crc \eta_{BJ} = 1_{BJ}$; in particular, if $B = 1_{\cY}$ we have $(\sigma * J)\crc \eta_J = 1_J$;
		\item \label{e:due} the second zig\hyp{}zag identity for the adjunction $J_! \adjunct{\eta}{\varepsilon} J^*$ is $\varepsilon_{J_!F} \crc J_!(\eta_F) =1_{J_!F}$;
		\item \label{e:ter} an irreducible expansion for the associator, expliciting all its components, is
		\[\label{associator}\gamma_{FGH} = \big(\varepsilon_{J_!F\crc J_!G}\crc J_!(J_!F * \eta_G)\big) * H\]
		\item \label{e:qtr} the density comonad of $J$, whose comoltiplication $\nu$ and counit $\sigma$ satisfy the comonad axioms is defined by the maps
		\begin{gather}\sigma = \varepsilon_{1_B} : J_!(J)\to 1 \notag \\ \nu : J_!(J) \xto{J_!(\eta_J)} J_!(J_!(J)\crc J) \xto{\tilde\gamma_{JJ}} J_!(J)\crc J_!(J)
		\end{gather}
		in particular, the comultiplication and the associator determine each other.
	\end{enumtag}
\end{remark}
\begin{proof}
	First of all, axiom \ref{skm:tre} is one of the two triangle identities of the adjunction ${J_!} \dashv J^*$. It remains to prove the other coherence conditions.
	\begin{enumtag}{skm}
		\item It is easy to see that one can prove commutativity before precomposing with $K$; one has then to prove the commutativity of the diagram
		\[
			\begin{tikzcd}[column sep=4cm]
				J_!(J_!(J_!F\crc G)\crc H) \ar[r, "\tilde\gamma_{F\jb G,H}"]\ar[d, "J_!(\tilde\gamma_{FG} * H)"'] & J_!(J_!F\crc G)\crc J_!H \ar[dd, "\tilde\gamma_{FG} * J_!H"]\\
				J_!(J_!F\crc (J_!G\crc H)) \ar[d, "\tilde\gamma_{F, G\jb H}"'] \ar[dr,dashed] \celtag[ur]{\text{\textsc{c}\oldstylenums{1}}}& {}\\
				J_!F \crc J_!(J_!G\crc H) \celtag[near start,ur]{\text{\textsc{c}\oldstylenums{2}}}\ar[r, "J_!F * \tilde\gamma_{GH}"']& J_!F\crc J_!G\crc J_!H
			\end{tikzcd}
		\]
		note that the dashed arrow exists: it is the mate $\sb\alpha$ of $\alpha = (J_!F \crc J_!G) * \eta_H$; the plan is to prove that the two sub\hyp{}diagrams in which this arrow splits the whole diagram commute separately. In order to do this, we start from the square diagram \textsc{c}\oldstylenums{1}: recall that \ref{e:uno} entails that one of the squares
		\[
			\begin{tikzcd}
				J_!A \ar[d, "J_!f"'] \ar[r,"\sb a"]& X\ar[d, "g"] & A \ar[d, "f"'] \ar[r,"a"]& J^* X\ar[d, "J^*g"] \\
				J_!B \ar[r, "\sb b"'] & Y & J_!B \ar[r, "b"'] & J^*Y
			\end{tikzcd}
		\]
		commutes if and only if the other does. Hence, if we denote $f=\tilde\gamma_{FG} * H,g=\tilde\gamma_{FG} * J_!H,a=(J_!F\crc G) * \eta_H,b=\alpha$, \textsc{c}\oldstylenums{1} commutes if and only if the square
		\[
			\begin{tikzcd}[column sep=4cm]
				J_!F \crc G\crc H \ar[d, "\sb(J_!F * \eta_G) * H"']\ar[r, "(J_!F\crc G) * \eta_H"]& J_!(J_!F \crc G)\crc J_!H\crc J\ar[d, "\sb(J_!F * \eta_G) * (J_!H \crc J)"]\\
				J_!F \crc J_!G \crc H \ar[r, "(J_!F\crc J_!G) * \eta_H"']& J_!F\crc J_!G \crc J_!H \crc J
			\end{tikzcd}
		\]
		commutes. It does, since their common value at the diagonal is simply the horizontal composition $\sb{(J_!F * \eta_G)} \myboxmin \eta_H$.

		A similar argument shows that \textsc{c}\oldstylenums{2} commutes: we have to establish the commutativity of
		\[\begin{tikzcd}
				J_!(J_!F\crc J_!G\crc H) \ar[dr, "\sb(J_!F * \eta_{G\jb H})"']\ar[rr, "\sb(J_!F * J_!G * \eta_H)"]&& J_!F\crc J_!G\crc J_!H \\
				& J_!F\crc J_!(J_!G\crc H)\ar[ur, "J_!F * \sb(J_!G * \eta_H)"'] &
			\end{tikzcd}\]
		a diagram that can be `straightened' to the left one below:
		\[
			\begin{tikzcd}[column sep=2cm]
				A \arrow[r, "\sb(J_!F * \eta_{G\jb H})"] \arrow[d,equal] & A \arrow[d, "J_!F * \sb(J_!G * \eta_H)"] && A \ar[d, equal]\ar[r, "J_!F * \eta_{G\jb H}"] & A\ar[d, "J_!F * \sb(J_!G * \eta_H) * J"]\\
				A \arrow[r, "\sb(J_!F * J_!G * \eta_H)"'] & A && A\ar[r, "J_!F * J_!G * \eta_H"'] & A
			\end{tikzcd}
		\] but now, the left diagram commutes if and only if the right one does; and the latter commutativity follows from the definition of $\tilde\gamma$.
		\item A separate argument works for diagrams \textsc{2r} and \textsc{2l}: unwinding the definitions, the commutativity of \textsc{2r} amounts to the commutativity of
		\[
			\begin{tikzcd}
				J_!(J_!F\crc G)\crc J \ar[r, "\gamma_{FGJ}"]\ar[d, "\eta_{J_!F\crc G}"']& J_!F\crc J_!G \crc J \ar[d,equal]\\
				J_!F\crc G \ar[r, "J_!F * \eta_G"]& J_!F \crc J_!G\crc J.
			\end{tikzcd}
		\]
		If we denote for short $J_!F * \eta_G = \varpi$, this commutativity is equivalent to the fact that $J^*(\sb{\varpi})\crc \eta_{J_!F\crc G} = \varpi$, which is true since the left had side of this equation is $(\sb{\varpi})^\bullet$. For axiom \textsc{2l}, the commutativity of
		\[
			\begin{tikzcd}
				J_!(J)\crc F \ar[r, ""] & J^*J_!F\\
				J_!(J)\crc F \ar[u,equal]\ar[r, "J_!F * \eta_F"']& J_!(J)\crc J^*J_!F \ar[u, "\varepsilon * J_!F"']
			\end{tikzcd}
		\] follows from the fact that the upper horizontal row coincides with the horizontal composition $\sigma \myboxmin \eta_F = \eta_F \crc (\sigma * F)$; this means that \textsc{2l} is true if and only if the square
		\[
			\begin{tikzcd}
				F \ar[r,"\eta_F"]& J^*J_!F \\
				J_!J \crc F \ar[u,"\sigma * F"] \ar[r, "J_!F * \eta_F"'] & J_!J \crc J^*J_!F\ar[u, "\varepsilon * J_!F"']
			\end{tikzcd}\]
		commutes; this is obvious by the naturality property of $\eta$.
		\item[\textsc{skm}\oldstylenums{4})] Unwinding the definition, axiom \ref{skm:qtr} asks the diagram
		\[
			\begin{tikzcd}
				J_!F \crc J_!(J) \ar[r, "J_!F * \sigma"]& J_!F \\
				J_!(J_!F\crc J)\ar[u, "\gamma_{FJ}"] & J_!F\ar[u,equal]\ar[l, "J_!(\eta_F)"]
			\end{tikzcd}\]
		to commute. In order to see that it does, we observe that the chain of equivalences
		\begin{align*}
			\underline{(J_!F * \sigma) \crc \varepsilon_{J_!F \crc J_!J}} \crc J_!(J_!F * \eta_J) & = \varepsilon_{J_!F}\crc \underline{J_!(J_!F * \sigma * J) \crc J_!(J_!F * \eta_J)}  \\
 & = \varepsilon_{J_!F} \crc \underline{J_!\big(J_!F * ((\sigma * J) \crc \eta_J)\big)} \\
 & = \varepsilon_{J_!F} \crc 1_{J_!F \crc J}
		\end{align*}
		holds, where the last step is motivated by \ref{e:uno}. So, the composition $(J_!F * \sigma)\crc \tilde\gamma_{FJ}$ equals $\varepsilon_{J_!F}$; this, together with \ref{e:due}, concludes. \qedhere
	\end{enumtag}
\end{proof}
\begin{remark}[A nice $J$ gives a nice structure]\label{adjoints-are-exts}
	In favourable cases the skew monoidal structure simplifies until it collapses to a straight monoidal structure:
	\begin{itemize}
		\item if $J$ is fully faithful, the unit $\eta_F : F \to J_!F \crc J$ is invertible for every $F$, so that $F \cong F \jb J$;
		\item if $J$ is dense, the density comonad $\sigma$ is isomorphic to the identity functor, so $\lambda_F : F \cong J\jb F$ is an isomorphism;
		\item if each extension $J_!F$ is absolute, then each $\tilde\gamma_{F,G}$ is invertible.
	\end{itemize}
\end{remark}
\begin{remark}
		The bifunctoriality of composition has been implicitly employed in the above proof; we spell it out explicitly for future reference.
		
		In order to prove such bifunctoriality, we have to show the commutativity of the square
		\[\begin{tikzcd}
				F\jb G\ar[r, "F\jb g"]\ar[d, "f\jb G"'] & F \jb K\ar[d, "f\jb K"] \\
				H \jb G \ar[r, "H\jb g"']& H\jb K
			\end{tikzcd}\]
	given $f : F\to H$ and $g : G\to K$. In fact, unwinding the definition it's easy to realize that this diagram commutes since its diagonal coincides with the horizontal composition $J_!f\myboxmin g$.
\end{remark}
\begin{remark}[A note on $\jb$-whiskering]\label{on-uisge}
This begs the question of how the whiskering of a 1-cell $F$ with a 2-cell $\alpha$ works, on the left and on the right; given the shape of the $\jb$-skew\hyp{}monoidal structure, it turns out that 
\begin{gather}
\alpha \jb F := J_!\alpha * F\notag \\
F \jb \alpha := J_!F * \alpha
\end{gather}
\end{remark}
\begin{definition}[Monads need not be endofunctors, but are always skew monoids]\label{def:relmo}
We will be interested in the notion of a $J$-\emph{relative monad}, or simply a relative monad when $J$ is understood from the context; $J$-relative monads are defined as internal monoids in the skew-monoidal category $([\cX,\cY], \jb)$, thus they come equipped with a \emph{unit} $\eta : J \To T$ and a multiplication $\mu : T\jb T\To T$. However, since the monoidal structure $\jb$ satisfies pretty asymmetrical coherence conditions, the commutativity conditions satisfied by a relative monad get altered accordingly. In particular,
\begin{enumtag}{rm}
	\item the unit axiom amounts to the commutativity of
	\[\label{relmo:unit}
		\begin{tikzcd}
			J\jb T \arrow[r, "\eta\jb T"] \arrow[rd, "\lambda"'] & T\jb T \arrow[d, "\mu"] & T\jb J \arrow[l, "T\jb \eta"'] \\
			& T \arrow[ru, "\varrho"'] &
		\end{tikzcd}
	\]
	where the right triangle `commutes' in the sense that the composition $\mu\crc (T\jb \eta)\crc \varrho$ makes the identity of $T$.
	\item the multiplication $\mu$ is `skew associative':
	\[\label{relmo:mult}
		\begin{tikzcd}
			(T\jb T)\jb T \arrow[d, "\mu\jb T"'] \arrow[r, "\gamma"] & T\jb(T\jb T) \arrow[r, "T\jb \mu"] & T\jb T \arrow[d, "\mu"] \\
			T\jb T \arrow[rr, "\mu"'] &  & T
		\end{tikzcd}
	\]
\end{enumtag}
\end{definition}
% !TEX root = ../yosegi.tex
\section{Algebras for relative 2-monads}\label{app:contra}
\subsection{The 2-category of algebras of a relative monad}
We recall that
\begin{definition}[Algebra for a total monad]\label{def:alg}
	An \emph{algebra} for $T  : \cC\to \cC$ is a map $a : T A\to A$ such that the diagrams
	\[
		\begin{tikzcd}
			T T A \arrow[r, "\mymu_A"]\arrow[d, "T a"'] & T A \arrow[d, "a"] & A \arrow[r, "\myeta_A"] \arrow[rd,equal] & T A \arrow[d, "a"] \\
			T A \arrow[r, "a"'] & A &  & A
		\end{tikzcd}
	\]
	are filled by isomorphisms.
\end{definition}
\begin{remark}\label{aint-no-algebras}
	In the case of relative monads, things get quite tricky, as very generic relative monads do not seem to possess algebras: indeed, if the coherence $\langle \gamma,\varrho,\lambda\rangle$ of $\bsT$ is highly non\hyp{}invertible the commutative diagrams defining a $\bsT$-algebra do not make sense any more.

	It is indeed quite easy to notice, transporting the $\bsT$-algebra axioms to the relative case, that whatever a $\bsT$\hyp{}algebra is, it must consist of a 1-cell of $\cD$, say $a : TA\to JA$, such that some diagrams `commute'; it is then just as easy to realize that the diagram
	\[
		\begin{tikzcd}
			T\jb TA \arrow[d, "Ta"'] \arrow[r, "\mu_A"] & TA \arrow[dd, "a"] \\
			T\jb JA &  \\
			TA \arrow[u, "\rho_A"] \arrow[r, "a"'] & JA
		\end{tikzcd}
	\]
	can't be made to `commute' unless $\rho$ is invertible. As a rule of thumb, the diagrams produced applying $T$ to the algebra introduces non\hyp{}invertible coherence, and their skewness must be corrected to ensure commutativity.

	If  we define \emph{$\rho$-normal} a relative monad such that $\rho : T \cong T\jb J$ is invertible, the notion of algebra makes sense again for $\rho$-normal monads.
\end{remark}
We also recall
\begin{definition}[Algebra morphism for a total monad]\label{def:laxidem}
	A (lax) morphism of algebras $(A,a)\to (B,b)$ for a total monad $T : \cC \to \cC$ is a pair $(f,\bar f)$ that fills the diagram
	\[
		\begin{tikzcd}
			T A \arrow[r, "T f"]  \arrow[d, "a"']\ar[dr,phantom, "{\scriptstyle\bar f}\Swarrow"] & T B \arrow[d, "b"] \\
			A \arrow[r, "f"'] & B
		\end{tikzcd}
	\]
	These data are subject to the coherence axioms spelled out in \cite[p. 3]{blackwell1989two}, eq. (1.2) and (1.3):
	\begin{gather}
		\begin{tikzcd}[ampersand replacement=\&]
			TTA \arrow[d, "\mu_A"'] \arrow[r, "TTf"] \& TTB \arrow[d, "\mu_B"] \& {} \& TTA \arrow[d, "Ta"'] \arrow[r, "TTf"] \& TTB \arrow[d, "Tb"] \\
			TA \ar[dr,phantom,"{\scriptstyle\bar f}\Swarrow"]\arrow[d, "a"'] \arrow[r, "Tf"] \& TB \arrow[d, "b"] \& \Huge = \& TA \ar[dr,phantom,"{\scriptstyle\bar f}\Swarrow"]\arrow[d, "a"'] \arrow[r, "Tf"] \& TB \arrow[d, "b"] \\
			A \arrow[r, "f"'] \& B \& {} \& A \arrow[r, "f"'] \& B
		\end{tikzcd}\\
		\begin{tikzcd}[ampersand replacement=\&]
			A \arrow[r, "f"] \arrow[d, "\eta_A"'] \& B \arrow[d, "\eta_B"] \& {} \& {} \\
			TA \arrow[d, "a"'] \arrow[r, "Tf"] \& TB \arrow[d, "b"] \& \Huge =  \& \text{id}_f \\
			A \arrow[r, "f"'] \& B \& {} \&
		\end{tikzcd}
	\end{gather}

	A similar definition yields a \emph{colax} morphism of algebras, where the direction of the 2-cell $\bar f$ is reversed.

	\emph{Algebra 2-cells} are defined to be 2-cells $\alpha : f\To g$ in $\cC$  such that the following diagram commutes
	\[\small
		\begin{tikzcd}
			TA \ar[r,phantom,"{\scriptstyle \Downarrow T\alpha}"]\ar[dr,phantom,near end, "{\scriptstyle\bar g\Swarrow}"]\arrow[r, "Tf", bend left] \arrow[r, bend right] \arrow[d, "a"'] & TB \arrow[d, "b"] & TA \ar[dr,phantom,near start, "{\scriptstyle\bar f\Swarrow}"]\arrow[r, "Tf", bend left] \arrow[d, "a"'] & TB \arrow[d, "b"] \\
			A \arrow[r, "g"', bend right] & B & A \ar[r,phantom,"{\scriptstyle\Downarrow\alpha}"]\arrow[r,  bend left] \arrow[r, "g"', bend right] & B
		\end{tikzcd}
	\]
\end{definition}
\begin{remark}
	Again, things get complicated when we relativize the monads in study: the same rule of thumb as above holds, and the diagram whose commutativity breaks now is
	\[
		\begin{tikzcd}
			TTA \ar[drr,phantom, "{\scriptstyle T\bar f}\Swarrow"]\arrow[rr, "TTf"] \arrow[d, "Ta"'] &  & TTB \arrow[d, "Tb"] &  & TTA \arrow[rr, "Tf"] \arrow[dd, "\mu_A"'] &  & TTB \arrow[dd, "\mu_B"] \\
			TJA \arrow[rr] &  & TJB &  &  &  &  \\
			TA \ar[drr,phantom, "{\scriptstyle\bar f}\Swarrow"]\arrow[u, "\rho_A"] \arrow[d, "a"'] \arrow[rr] &  & TB \arrow[u, "\rho_B"'] \arrow[d, "b"] & \Huge = & TA\ar[drr,phantom,"{\scriptstyle\bar f}\Swarrow"] \arrow[d] \arrow[rr, "Tf"] &  & TB \arrow[d] \\
			JA \arrow[rr, "Jf"'] &  & JB &  & JA \arrow[rr, "Jf"'] &  & JB
		\end{tikzcd}
	\]
unless $\bsT$ is $\rho$-normal there is no way we can give meaning to this equality.

Note that instead, the unit condition and the notion of algebra 2-cell are fortunately unaffected by relativization of $\bsT$.
\end{remark}
\begin{notation}
Each time we speak about the 2-category of algebras for a relative monad $\bsT$, we implicitly assume that $\bsT$ is $\rho$-normal; obviously this assumption does not affect the discussion about yosegi boxes and Yoneda structures, since the underlying monad of a yosegi is always normal.
\end{notation}
\subsection{Algebras in the lax idempotent case}
\begin{proposition}\label{lax-equivs}
	Lax idempotent monads admit equivalent definitions as follows; the colax version of each condition is stated beside the correspondent lax version.
	\begin{enumtag}{l}
		\item \label{l:uno} for every pair of $ {\bsT}$-algebras $a,b$ and morphism $f :A \to B$, the square
		\[\notag
			\begin{tikzcd}[row sep=8mm,column sep=8mm]
				{\bsT} A \arrow[d, "a"']\ar[dr, phantom, "\Swarrow"] \ar[r,"{\bsT}f"] &  {\bsT} B \arrow[d, "b"]\\
				A \arrow[r, "f"'] & B
			\end{tikzcd}
		\]
		is filled by a unique 2-cell $\bar f : b \crc \bsT f\To f\crc a $ which is a lax morphism of algebras;
		\item \label{l:due} $a\dashv \myeta_A$ with invertible counit;
		\item \label{l:ter} $\mymu\dashv \myeta * {\bsT}$ with invertible counit;
		\item there is a modification $\Delta : {\bsT} * \myeta \To \myeta * {\bsT}$ such that $\Delta * \myeta = 1$ and $\mymu * \Delta = 1$.
	\end{enumtag}
	The conditions for colax algebras are of course obtained replacing lax algebra morphisms with \emph{colax} ones.
	\begin{enumtag}{c}
		\item \label{c:uno} for every pair of $ {\bsT}$-algebras $a,b$ and morphism $f :A \to B$, the square
		\[\notag
			\begin{tikzcd}[row sep=8mm,column sep=8mm]
				{\bsT} A \arrow[d, "a"']\ar[dr, phantom, "\Nearrow"] \ar[r,"{\bsT}f"] &  {\bsT} B \arrow[d, "b"]\\
				A \arrow[r, "f"'] & B
			\end{tikzcd}
		\]
		is filled by a unique 2-cell $\bar f : f\crc a \To b \crc \bsT f$ which is a colax morphism of algebras;
		\item \label{c:due} $\myeta_A \dashv a$ with invertible unit;
		\item \label{c:ter} $\myeta  *  {\bsT}\dashv \mymu$ with invertible unit;
		\item there is a modification $\Upsilon : \myeta * {\bsT} \To {\bsT} * \myeta$ such that $\Upsilon * \myeta = 1$ and $\mymu * \Upsilon = 1$.
	\end{enumtag}
\end{proposition}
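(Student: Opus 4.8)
The plan is to prove the lax clauses \ref{l:uno}, \ref{l:due}, \ref{l:ter} and the modification clause equivalent by a single cycle of implications, and then to obtain the colax clauses \ref{c:uno}, \ref{c:due}, \ref{c:ter} for free: applying the involution $(\firstblank)^\co$, which reverses every $2$-cell but no $1$-cell, turns a lax idempotent monad into a colax idempotent one, exchanges ``adjunction with invertible counit'' for ``adjunction with invertible unit'', and swaps lax algebra morphisms with colax ones, so that each colax clause is literally the $(\firstblank)^\co$-image of its lax twin. Throughout I assume $\bsT$ is $\rho$-normal (see \autoref{aint-no-algebras}), so that $\bsT$-algebras and the $\jb$-whiskerings $\myeta*\bsT$, $\bsT*\myeta$ of \autoref{on-uisge} are well defined and, as in \autoref{adjoints-are-exts}, the skew associator and unitors are invertible on the components in play.

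I will run the cycle
\[\notag \text{\ref{l:due}}\Rightarrow\text{\ref{l:ter}}\Rightarrow(\Delta)\Rightarrow\text{\ref{l:due}},\qquad \text{\ref{l:due}}\Leftrightarrow\text{\ref{l:uno}},\]
where $(\Delta)$ denotes the modification clause. The step \ref{l:due}$\Rightarrow$\ref{l:ter} is immediate: $\bsT A$ carries the free structure $\mymu_A:\bsT\jb\bsT A\to\bsT A$, whose unit is $\myeta_{\bsT A}$, so the instance of \ref{l:due} at this free algebra is exactly $\mymu_A\dashv\myeta_{\bsT A}$ with invertible counit. For \ref{l:ter}$\Rightarrow(\Delta)$, the counit of $\mymu_A\dashv\myeta_{\bsT A}$ sits on $\mymu_A\crc\myeta_{\bsT A}=1_{\bsT A}$ by the unit law \eqref{relmo:unit}, so the genuine datum is the unit $\bar\eta_A:1\To\myeta_{\bsT A}\crc\mymu_A$; whiskering it by $\bsT\myeta_A$ and cancelling $\mymu_A\crc\bsT\myeta_A\cong 1$ (again \eqref{relmo:unit}) yields a $2$-cell $\Delta_A:\bsT\myeta_A\To\myeta_{\bsT A}$, and the two triangle identities become, after the monad laws, the modification equations $\Delta*\myeta=1$ and $\mymu*\Delta=1$.

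For the two faces of \ref{l:due}$\Leftrightarrow$\ref{l:uno} I use mates. Given algebras $(A,a),(B,b)$ and $f:A\to B$, the required filler $\bar f:b\crc\bsT f\To f\crc a$ is the mate of the identity under $a\dashv\myeta_A$ and $b\dashv\myeta_B$: using the naturality of $\myeta$ and the invertible counits $a\crc\myeta_A\cong1$, $b\crc\myeta_B\cong1$, both legs collapse to $f$, which simultaneously exhibits $\bar f$ and forces its uniqueness. Conversely, given \ref{l:uno}, view $\myeta_A$ as an algebra morphism $(A,a)\to(\bsT A,\mymu_A)$ and take its unique filler; since $\mymu_A\crc\bsT\myeta_A\cong1_{\bsT A}$, this filler is precisely a $2$-cell $\zeta:1_{\bsT A}\To\myeta_A\crc a$, which together with the invertible counit $a\crc\myeta_A\cong1$ is the adjunction $a\dashv\myeta_A$; the triangle identities are handed to us by the uniqueness half of \ref{l:uno}.

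The one substantial step, and the main obstacle, is $(\Delta)\Rightarrow$\ref{l:due}: passing from the free algebra to an arbitrary one. Since $(\Delta)$ is equivalent to $\mymu_A\dashv\myeta_{\bsT A}$, what remains is to transport this adjunction along the fact that $a$ exhibits $(A,a)$ as a retract of the free algebra $(\bsT A,\mymu_A)$ (here $a$ is an algebra morphism by the associativity axiom of \autoref{def:alg}, split by $\myeta_A$ because $a\crc\myeta_A\cong1$). Concretely I build the unit $\zeta:1_{\bsT A}\To\myeta_A\crc a$ by whiskering the free unit with $a$ and repairing the types with the associativity isomorphism, then check $a*\zeta=1$ and $\zeta*\myeta_A=1$ by reduction to $\Delta*\myeta=1$ and $\mymu*\Delta=1$. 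In the ordinary two-dimensional setting this transport is routine, but here $\bsT$ is only a $\jb$-monoid and $\bsT A$ abbreviates $j_!\bsT A$, so each pasting occurring in the triangle identities must be shown well typed across the skew coherences; this is exactly the point at which $\rho$-normality and the invertibility of the associator and unitors (\autoref{adjoints-are-exts}) are indispensable, and it is where I expect the bookkeeping to be heaviest.
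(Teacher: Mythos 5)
The paper itself states \autoref{lax-equivs} \emph{without proof}: it is recalled from the classical theory of lax idempotent $2$-monads (Kock, Kelly--Lack, Marmolejo) and transported to the $\rho$-normal relative setting, so your proposal supplies an argument where the paper has none. Your overall route is the standard one: free algebras for \ref{l:due}$\;\Rightarrow\;$\ref{l:ter}, the unit of the free adjunction for \ref{l:ter}$\;\Rightarrow (\Delta)$, mates for \ref{l:uno}$\;\Leftrightarrow\;$\ref{l:due}, and $(\firstblank)^\co$-duality for the colax list. You also read the colax clauses correctly, namely as the $(\firstblank)^\co$-images characterizing the \emph{dual} class of monads rather than as conditions equivalent to the lax ones --- an equivalence that would be false, since the presheaf construction satisfies \ref{l:due} (colimit is left adjoint to Yoneda) but not \ref{c:due}.

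There is, however, a genuine gap at exactly the step you call the substantial one. The cycle obliges you to derive \ref{l:due} from the modification clause $(\Delta)$ \emph{alone}, but you open that step with ``since $(\Delta)$ is equivalent to $\mymu_A\dashv\myeta_{\bsT A}$'' and then build the unit $\zeta$ by whiskering the free unit $\bar\eta$. At that point only \ref{l:ter}$\;\Rightarrow (\Delta)$ has been proved; the converse, which is what you invoke, is precisely the free-algebra instance of the implication $(\Delta)\Rightarrow\;$\ref{l:due} that you are in the middle of proving. As written, your argument establishes \ref{l:uno}$\;\Leftrightarrow\;$\ref{l:due}$\;\Leftrightarrow\;$\ref{l:ter} and that these imply $(\Delta)$, but nothing is ever deduced \emph{from} $(\Delta)$, so the modification clause is shown to be necessary, not equivalent. (In the relative setting the shortcut is even less innocent: the modification has components only at objects of the domain of $\bsT$, so the component ``$\Delta_{\bsT A}$'' one would need to manufacture the free unit does not literally exist.) The repair is standard and uses only ingredients you already have: for an arbitrary algebra $(A,a)$ define the unit directly from the modification, $\zeta := \bsT a * \Delta_A$ (with $\bsT a$ standing for $j_!\bsT(a)$, as per the paper's conventions), whose source is $\bsT a\crc \bsT\myeta_A\cong \bsT(a\crc\myeta_A)\cong 1_{\bsT A}$ by the algebra unit law and whose target is $\bsT a\crc\myeta_{\bsT A}\cong \myeta_A\crc a$ by naturality of the $j_!$-extended unit. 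Then your own reductions go through verbatim: $a*\zeta=(a\crc\bsT a)*\Delta_A\cong(a\crc\mymu_A)*\Delta_A=a*(\mymu_A*\Delta_A)=1$ by algebra associativity and $\mymu*\Delta=1$, while $\zeta*\myeta_A=\bsT a*(\Delta_A*\myeta_A)=1$ by $\Delta*\myeta=1$. With this change $(\Delta)\Rightarrow\;$\ref{l:due} stands on its own, \ref{l:ter} follows as its free-algebra instance (under the same $j_!$-extended reading of ``algebra'' that your step \ref{l:due}$\;\Rightarrow\;$\ref{l:ter} already presupposes), and the cycle closes.
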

Total KZ\hyp{}monads have a very nice and well behaved notion of cocomplete object with respect to them. The notion is identical to the one of cocomplete object with respect to a Yoneda structure as described in \ref{yoda-cocompleta}. When $\bsP$ is a relative KZ-monad every object of the form $\bsP A$ for some $A$ is cocomplete, as observed by \cite[Rem. 5]{walker} in the non-relative case. This observation is still true in the relative case.

\begin{remark}[Relative of \protect{\cite[Rem. 5 \& Prop. 6]{walker}}]\label{KZ-cocomplete}\leavevmode
When $\bsP$ is a relative KZ-monad every object of the form $\bsP A$ for some $A$ is cocomplete, moreover pseudo-agebras coincides with cocomplete objects. 
\end{remark}

\subsection{The Kleisli bicategory of a relative monad}
As is well known, every total monad $T : \cC\to \cC$ has a Kleisli category with the same objects of $\cC$ and where morphisms are 1-cells $f : A \to TB$. Composition is given by the rule
\[\label{kle-comp}
	g \bullet f := \mu_C \crc Tg \crc f,
\]
associative and unital (with identity $\eta_A : A\to TA$ the unit map of the monad) thanks to the monad axioms.

The same structure exists in 2-dimensional algebra, so that each 2-monad has a Kleisli 2-category. In the relative case, things get a little bit hairy: the maximal possible generality is the one of \cite[§5]{fiore2016relative}, but in that case we can't make sense to a `composition law' in the same sense of \eqref{kle-comp}, as there is no real multiplication for a relative monad.

Mildly restricting the class of relative monads in study (more or less, those for which enough left extensions exist in order to ensure the existence of $\mymu$) we obtain a formalism in which the result that the Kleisli bicategory of a yosegi is a Yoneda equipment appears natural and well\hyp{}motivated.
\begin{definition}[Kleisli bicategory of a relative monad]\label{kleibi}
	Let $\bsT$ be a relative monad with underlying functor $T : \cC\to \cD$; then, in case $\bsT$ is normal (i.e., both $\rho$-normal and $\lambda$-normal) we define its \emph{Kleisli bicategory} posing:
	\begin{itemize}
		\item 0-cells the objects of $\cC$;
		\item 1-cells $f : X\pto Y$ the 1-cells $f : JX \to TY$;
		\item composition of 1-cells $X\overset{f}{\pto}Y\overset{g}{\pto}Z$ defined by the composition
		      \[\begin{tikzcd}
				      JX \ar[drr,phantom,"\overset{g\bullet f}{\pto}"]\arrow[d, "f"'] &  & TZ \\
				      TY \arrow[r, "\rho_Y"'] & T\jb JY \arrow[r, "P\jb g"'] & T\jb TZ \arrow[u, "\mu_Z"']
			      \end{tikzcd}\]
		\item all 2-cells between 1-cells.
	\end{itemize}
\end{definition}
In order to show that this is really a bicategory we shall show that the unit of the monad plays the role of two-sided identity (up to an isomorphism prescribed by the monad structure), and that the composition is associative (up to an isomorphism prescribed by the monad structure).
\begin{itemize}
	\item $\eta_Y \bullet f = \mu_Y\crc T\jb\eta_Y \crc \rho_Y\crc f = f$ since \eqref{relmo:unit} holds.
	\item for what concerns the composition $f\bullet \eta_X$ we can reduce it to
	      \begin{align*}
		      f\bullet \eta_X & = \mu_Y\crc T\jb f\crc \rho_X\crc \eta_X              \\
		                      & = \mu_Y \crc \eta\jb TY \crc \lambda_Y^{-1}\crc f = f
	      \end{align*}
	      using the fact that $\bsT$ is normal.
	\item Associativity amounts to verify that given $W\overset{f}{\pto} Z\overset{g}{\pto} X \overset{h}{\pto} Y$ the following two lines are equal:
	      \begin{align*}
		      ( h \bullet g)\bullet f & = \mu_Y \crc T\jb(\mu_Y\crc T\jb h \crc \rho_X\crc g)\crc\rho_Z\crc f                              \\
		                              & = \mu_Y \crc \underline{T\jb\mu_Y\crc T\jb T\jb T\jb h \crc T\jb\rho_X}\crc T\jb g\crc\rho_Z\crc f \\
		      h\bullet(g\bullet f)    & = \mu_Y \crc \underline{T\jb h \crc \rho_X \crc \mu_X} \crc T\jb g\crc \rho_Z\crc f.
	      \end{align*}
	      This follows from the fact that $\mu_Y$ coequalizes the pair of underlined arrows, which in turn follows from the associativity axiom for $\mu$, in \eqref{relmo:mult}.
\end{itemize}

% \paragraph{\bf Acknowledgements}
% The first author is supported by grants GA17\hyp{}27844S and \newline MUNI\fshyp{}A\fshyp{}1103\fshyp{}2017. The second author is supported by the Max Planck Inst. for Math. (Bonn), and wrote the present paper, in its entirety, during his stay at the Institute.

\footnotesize
\bibliography{allofthem.bib}
\bibliographystyle{alpha}
\end{document}